\newtheorem{thrm}{Theorem}[section]
\newtheorem{lmm}[thrm]{Lemma}
\newtheorem{prpstn}[thrm]{Proposition}
\newtheorem{crllr}[thrm]{Corollary}
\numberwithin{equation}{section}
\def\R{\mathbb R}
\def\to{\rightarrow}
\DeclareMathOperator*{\dive}{div}
\DeclareMathOperator*{\dom}{dom}
\DeclareMathOperator*{\argmin}{argmin}
\DeclareMathOperator*{\Var}{Var}
\DeclareMathOperator*{\sgn}{sgn}
\DeclareMathOperator*{\proj}{proj}
\DeclareMathOperator*{\inte}{int}
\newcommand\thankssymb[1]{\textsuperscript{\@fnsymbol{#1}}}
\begin{document}

\title[Mini-batch descent in semiflows]{Mini-batch descent in semiflows}

\author[A. Dom\'{i}nguez]{
Alberto Dom\'{i}nguez Corella\thankssymb{2}}
\email{alberto.corella@fau.de}

\author[M. Hern\'{a}ndez]{Mart\' in Hern\' andez\thankssymb{2}}
\email{martin.hernandez@fau.de}

\thanks{\thankssymb{2}
 Chair for Dynamics, Control, Machine Learning, and Numerics, Alexander von Humboldt-Professorship, Department of Mathematics,  Friedrich-Alexander-Universit\"at Erlangen-N\"urnberg,
91058 Erlangen, Germany.}

\subjclass[2020]{34G25,49J52 ,37C10,35K55,37L05
}
\keywords{gradient flow, mini-batch, stochastic gradient descent, domain decomposition}

\begin{abstract}
This paper investigates the application of mini-batch gradient descent to semiflows (gradient flows). Given a loss function (potential), we introduce a continuous version of mini-batch gradient descent by randomly selecting sub-loss functions over time, defining a piecewise flow. We prove that, under suitable assumptions on the potential generating the semiflow, the \textit{mini-batch descent flow} trajectory closely approximates the original semiflow trajectory on average. In addition, we study a randomized minimizing movement scheme that also approximates the semiflow of the full loss function. We illustrate the versatility of this approach across various problems, including constrained optimization, sparse inversion, and domain decomposition. Finally, we validate our results with several numerical examples.
\end{abstract}
\subjclass{34G25, 49J52 ,37C10, 35K55, 37L05}
\keywords{gradient flow, mini-batch, stochastic gradient descent, domain decomposition}
\maketitle

\section{Introduction}
Probability theory has significantly impacted various areas of mathematics, particularly algorithms and combinatorics. The use of randomness in algorithms dates back to Monte Carlo methods \cite{Metropolis1953,Ulam1949}. In subsequent years, random algorithms gained strength in optimization, notably with the emergence of techniques such as simulated annealing \cite{Kirkpatrick1983} and genetic algorithms \cite{Holland1975}, which were used in complex optimization problems. Recently, stochastic gradient descent algorithms have attracted attention in artificial intelligence; their development has been crucial in training large-scale models, especially machine learning algorithms \cite{Bottou2010,Bottou2018}.
\smallbreak
A notable variant of stochastic descent algorithms is the so-called mini-batch gradient descent. Unlike vanilla stochastic gradient descent, mini-batch implementations typically aggregate gradients over small batches, reducing gradient variance \cite{Qian2020impact}. This variant balances robustness and efficiency and has become one of the most widely used implementations of gradient descent in deep learning.
The continuous equivalent of gradient descent is the so-called gradient flow. With small learning rates, mini-batch gradient descent mimics the gradient flow trajectory of the full batch loss function. This paper is concerned with mini-batch descent applied to \textit{semiflows }(gradient flows of convex non-necessarily differentiable functions). Given a loss function expressed as the average of \textit{sub-loss} functions, we compare the gradient flow of the full loss function with a flow that does not strictly follow the steepest descent of the full loss function but instead follows portions of it, determined by randomly chosen batches of sub-loss functions changing over time. We refer to this process as \textit{mini-batch descent flow}.
A full description of how these trajectories are generated is given in the next section.
\smallbreak
Our main interest lies in comparing trajectories rather than finding minima of the full loss function through the mini-batch descent-generated trajectory. Provided that the replacement of batches is done sufficiently often, we prove that the trajectory generated by the mini-batch descent flow is close (in expectation) to the original trajectory generated by the gradient flow of the full loss function.  By examining the paths taken by the trajectories of the mini-batch descent flow, it is possible to gain insights into their convergence properties and robustness to noise.
\smallbreak
For simplicity, we focus on proper lower semicontinuous convex functionals defined on Hilbert spaces. With some standard modifications, the results also hold for semi-convex functionals by replacing the convex subdifferential with the limiting one. An advantage of considering gradient flows in Hilbert spaces is that this framework allows several solutions of evolution equations to be interpreted as gradient flows of suitable functionals.
To address non-single-valued subdifferentials, we introduce an appropriate concept of a non-biased estimator, ensuring that the expected value of the randomly chosen subgradient aligns with the minimal norm subdifferential of the loss function at each step. We quantify the discrepancy of the mini-batch descent using a measure function analogous to the one used in stochastic gradient-based methods to quantify algorithm variance.
\smallbreak
Additionally, we introduce a suitable randomized version of the \textit{minimizing movement scheme} and study its relationship with the gradient flow of the full loss function. Through rigorous analysis, we provide convergence estimates and conditions under which this randomized minimizing movement scheme can approximate the gradient flow. This has significant implications for the long-term behavior of the trajectories generated by this mini-batch descent, demonstrating that they can reach arbitrarily good approximate minimizers of the full loss function.
\smallbreak
A full account of the results is given in the next section. Let us now provide an overview of the related literature and highlight our contributions.
\smallbreak
The idea of comparing the dynamics generated by an equation with a dynamic changing over time (as in stochastic algorithms) was first employed in \cite{ShietalRBM} for solving particle interacting dynamics under the name of random batch methods. These methods have been further extended to address other problems \cite{ShietalRBM2,RBM1,RBM2,10463131}. The mini-batch descent scheme we use is based on \cite[Algorithm 2b]{ShietalRBM}. This algorithm was further used in \cite{LQRVeldman} to generate approximate optimal control trajectories. 
We extend the convergence results for state trajectories previously obtained in \cite{LQRVeldman} from positive definite matrices to general convex functions and from Euclidean to Hilbert spaces. Infinite-dimensional spaces offer several advantages for problems where the \textit{optimize then discretize} paradigm is more appropriate. Results for optimal control problems with gradient flow dynamics can potentially be derived by adjusting the proofs in \cite{LQRVeldman} with the convergence results presented here. Additionally, by allowing set-valued operators from a subdifferential, further results for control-constrained problems can be considered.
We give an example of how trajectories look when constraints are induced by means of indicator functions in Section \ref{projdyn}.
\smallbreak
In \cite{Latz}, a stochastic process was introduced as a continuous-time representation of the stochastic gradient descent algorithm. This process is characterized as a dynamical system coupled with a continuous-time index process living on a finite state space, wherein the dynamical system, representing the gradient descent part, is coupled with the process on the finite state space, representing the random sub-sampling. It is proved there that the process converges weakly to the gradient flow with respect to the full target function as the step size approaches zero. In contrast, the process studied in this paper does not randomly select when to change from following the steepest descent of a sub-loss function to follow another, i.e., the waiting time is fixed and deterministic. However, we obtain much more robust estimates for the trajectories. We also mention \cite{Latz2}, where sparse inversion and classification problems were studied from this same continuous-time perspective. Inspired by this, we also consider a sparse inversion problem in Section \ref{Sparseinv} for illustrative purposes of how our abstract results can be applied.

\smallbreak

Let us now mention \cite{Eisenmann2024randomized}, where an abstract framework for solving evolutionary equations is considered. This framework is also inspired by stochastic algorithms used in the machine learning community. The main idea there is to consider a splitting of an operator and then construct a sequence that solves an implicit Euler scheme that at each step randomly selects operators from the splitting. This is in the same spirit as in \cite{ShietalRBM,LQRVeldman,10463131}. Our results for the minimizing movements are inspired by \cite{Eisenmann2024randomized} and are formulated in a somewhat different functional setting. Though the framework in \cite{Eisenmann2024randomized} is very general, it is also very abstract and requires a lot of verification even for simple problems. In comparison, our framework does not require much verification of spaces and operators; one just needs to provide the potential/loss function.
Also, gradient flows are very flexible, as, e.g., the heat flow can be seen as an $L^2$ flow or as an $H^{-1}$ flow. Moreover, while we do require operators to be monotone, we allow them to be non-single-valued. Thus, different problems can be considered, such as the porous media equation and the one-phase Stefan problem. One of the seminal contributions of \cite{Eisenmann2024randomized} is that they identify that mini-batch descent trajectories can be used for random domain decomposition algorithms to solve evolutionary equations; they present as an example a parabolic equation with the $p$-Laplacian. We present an example of domain decomposition for a parabolic obstacle problem, which comes from an operator that is non-single-valued.
\smallbreak

The paper is organized as follows. Section \ref{sec:problem_formulation} contains an overview of the problem and the main results. The proofs are carried out in Section \ref{sec:well_posedness_convergence}. In Section \ref{sec:applications}, we show how the proposed randomized schemes can be applied in different instances: constrained optimization, sparse inversion, and domain decomposition. Finally, in the Appendix, we provide further details on the decomposition of the minimal norm subgradient and the proposed variance measure based on it.
\section{Formulation of the Problem and Main Results}\label{sec:problem_formulation}

Let $\mathcal{H}$ be a Hilbert space and $\Phi:\mathcal{H} \to \mathbb{R} \cup \{+\infty\}$ a convex, lower semicontinuous, and proper function. For an initial datum $u_0 \in \dom\partial \Phi$, we consider the gradient flow equation
\begin{align}\label{gf_eq}
    \left\{ 
    \begin{array}{l} 
        \dot{u}(t) \in -\partial \Phi(u(t)) \quad t \in (0, \infty), \\  
        u(0) = u_0, 
    \end{array} 
    \right.
\end{align}
where $u_0$ is the initial data. A locally absolutely continuous function $u: [0, +\infty) \to \mathcal{H}$ is said to be a strong solution of (\ref{gf_eq}) if $u(0) = u_0$ and $0 \in \dot{u}(t) + \partial \Phi(u(t))$ for almost every $t \in [0, \infty)$. When $u_0 \in \dom\partial \Phi$, equation (\ref{gf_eq}) has a unique strong solution \cite[Theorem 17.2.2]{SoBV}.

\noindent
Let $p_1, \dots, p_n$ be positive numbers such that $\sum_{i=1}^n p_i = 1$. We assume that $\Phi$ can be represented as the average
\begin{align}\label{Vertretung}
    \Phi = \sum_{i=1}^{n} p_i \Phi_{i},
\end{align}
where $\Phi_{1}, \dots, \Phi_{n}: \mathcal{H} \to \mathbb{R} \cup \{+\infty\}$ are convex, lower semicontinuous, and proper functions satisfying 
\begin{align}\label{assu:contention_domains}
    \inf_{u \in \mathcal{H}} \Phi_{i}(u) > -\infty \qquad \text{and} \qquad \dom\Phi \subset \dom\Phi_{i} \subset \overline{\dom\Phi}, \quad \forall i \in \{1, \dots, n\}.
\end{align}
The idea behind representation (\ref{Vertretung}) is that the potential $\Phi$ can be obtained as the expected value of a random variable that takes the value $\Phi_i$ with probability $p_i$ for each $i \in \{1, \dots, n\}$.

\smallbreak
Let $\mathcal{B}_1, \dots, \mathcal{B}_m$ be nonempty sets such that $\bigcup_{j=1}^m \mathcal{B}_j = \{1, \dots, n\}$; these are called batches. To each batch $\mathcal{B}_j$, there is an associated positive probability $\pi_{j}$ in such a way that 
\[
p_i = \sum_{j: i \in \mathcal{B}_j} \frac{\pi_j}{|\mathcal{B}_j|} \quad \text{for each} \quad i \in \{1, \dots, n\}.
\]
The idea behind mini-batch descent is that the flow does not follow strictly the steepest descent of the full potential, but a portion of it determined by a randomly chosen batch.

For a set $\mathcal{B} \subset \{1, \dots, n\}$, let $\Phi_{\mathcal{B}}: \mathcal{H} \to \mathbb{R} \cup \{+\infty\}$ be given by
\begin{align*}
    \Phi_{\mathcal{B}}(u) := \frac{1}{|\mathcal{B}|} \sum_{i \in \mathcal{B}} \Phi_{i}(u).
\end{align*}

In order to describe the process formally, consider a positive parameter $\varepsilon > 0$ and a sequence $\{j_{k}\}_{k \in \mathbb{N}}$ of independent random variables. The positive parameter represents the waiting time at which the flow changes from following one subgradient batch to another at each step. On the other hand, each $j_k$ represents the choice of batch $\mathcal{B}_{j_k}$ at the $k^{\text{th}}$ step. The sequence $\{j_{k}\}_{k \in \mathbb{N}}$ is assumed to satisfy $\mathbb{P}(j_k = j) = \pi_j$ for each $j \in \{1, \dots, m\}$ and $k \in \mathbb{N}$. Set $t_{0} := 0$ and define recursively $t_{k} := t_{k-1} + \varepsilon$ for $k \in \mathbb{N}$.

\smallbreak
We will say that a continuous function $v_{\varepsilon}: [0, +\infty) \to \mathcal{H}$ is a solution of the \textit{mini-batch descent flow} scheme if $v_{\varepsilon}(0) = u_0$, $v_{\varepsilon}$ is locally absolutely continuous in $(t_{k-1}, t_k)$, and 
\begin{align}\label{eq:piecewise_flow}
    0 \in \dot{v}_{\varepsilon} + \partial \Phi_{\mathcal{B}_{j_k}}(v_{\varepsilon}) \quad \text{for almost every} \quad t \in [t_{k-1}, t_{k}) \quad \text{and for all} \quad k \in \mathbb{N}.
\end{align}
Before proceeding further, let us first address the well-posedness of this scheme, i.e., ensuring existence and uniqueness for the piecewise flow \eqref{eq:piecewise_flow}.

\begin{thrm}\label{existencethrm}
    There exists a unique solution $v_{\varepsilon}: [0, +\infty) \to \mathcal{H}$ of the \textit{mini-batch descent flow} scheme. Moreover, 
    \begin{itemize}
        \item[(i)] If $\dom \partial\Phi_{\mathcal{B}_j} \subset \dom\Phi$ for all $j \in \{1, \dots, m\}$, then $v_{\varepsilon}: [0, +\infty) \to \mathcal{H}$ is locally absolutely continuous;
        \item[(ii)] If $\dom \partial\Phi_{\mathcal{B}_j} \subset \dom\partial\Phi$ for all $j \in \{1, \dots, m\}$, then $v_{\varepsilon}: [0, +\infty) \to \mathcal{H}$ is locally Lipschitz continuous.
    \end{itemize}
\end{thrm}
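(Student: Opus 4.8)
The plan is to reduce the statement to the classical well-posedness and regularity theory for the gradient flow of a single convex, lower semicontinuous, proper functional (the Brezis--Komura theory underlying \eqref{gf_eq}), applied separately on each slab $[t_{k-1},t_k)$, and then to glue the pieces together. I would construct $v_\varepsilon$ by induction on $k$. On $[t_0,t_1)$ the scheme \eqref{eq:piecewise_flow} is exactly the gradient flow of $\Phi_{\mathcal B_{j_1}}$ started at $u_0$; since $u_0\in\dom\partial\Phi\subset\dom\Phi\subset\dom\Phi_{\mathcal B_{j_1}}\subset\overline{\dom\Phi}$ by \eqref{assu:contention_domains}, that theory yields a unique continuous solution on $[t_0,t_1]$. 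The key observation is that \eqref{assu:contention_domains} forces $\overline{\dom\Phi_{\mathcal B_j}}=\overline{\dom\Phi}$ for every $j$, and $\overline{\dom\Phi}$ is closed and invariant along each flow; hence the endpoint $v_\varepsilon(t_1^-)$ lies in $\overline{\dom\partial\Phi_{\mathcal B_{j_2}}}$ and is a legitimate initial datum for the next slab. Iterating defines $v_\varepsilon$ on all of $[0,\infty)$, continuous by matching the one-sided limits at each $t_k$.

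Uniqueness would follow the same slab-by-slab logic: two solutions must coincide on $[t_0,t_1)$ by uniqueness of the gradient flow of $\Phi_{\mathcal B_{j_1}}$, hence agree at $t_1$ by continuity, then agree on $[t_1,t_2)$, and so on, with an induction closing the argument.

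For the regularity claims I would track the switching values $v_\varepsilon(t_k)$ and invoke the two standard a priori bounds. For (i), the energy identity for the flow of $\Phi_{\mathcal B_{j_k}}$ on $(t_{k-1},t_k)$ reads $\Phi_{\mathcal B_{j_k}}(v_\varepsilon(t))+\int_{t_{k-1}}^t\|\dot v_\varepsilon\|^2\,ds=\Phi_{\mathcal B_{j_k}}(v_\varepsilon(t_{k-1}))$, so whenever the starting value lies in $\dom\Phi_{\mathcal B_{j_k}}$ the right-hand side is finite (here $\inf\Phi_{\mathcal B_{j_k}}>-\infty$ by \eqref{assu:contention_domains}), giving $\dot v_\varepsilon\in L^2\subset L^1$ and hence absolute continuity on the closed slab. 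Condition (i), namely $\dom\partial\Phi_{\mathcal B_j}\subset\dom\Phi$, is exactly what guarantees re-entry: the regularizing effect places $v_\varepsilon(t_k^-)\in\dom\partial\Phi_{\mathcal B_{j_k}}\subset\dom\Phi\subset\dom\Phi_{\mathcal B_{j_{k+1}}}$, so the finite-energy hypothesis is restored at every restart, and concatenating the absolutely continuous pieces with matching endpoints yields local absolute continuity on $[0,\infty)$. For (ii) I would instead use the minimal-section bound $\|\dot v_\varepsilon(t)\|\le\|\partial^{0}\Phi_{\mathcal B_{j_k}}(v_\varepsilon(t_{k-1}))\|$, valid when the starting value lies in $\dom\partial\Phi_{\mathcal B_{j_k}}$; if the switching values stay in $\dom\partial\Phi_{\mathcal B_{j_{k+1}}}$, each slab is Lipschitz and the concatenation is locally Lipschitz.

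The main obstacle is the domain bookkeeping at the switching times: verifying that $v_\varepsilon(t_k)$ always lands in the set from which the next slab inherits the desired regularity. This is automatic for continuity (the common closure $\overline{\dom\Phi}$) and is handled by condition (i) for absolute continuity, but it is most delicate for (ii), where one must reconcile $v_\varepsilon(t_k^-)\in\dom\partial\Phi_{\mathcal B_{j_k}}$ with membership in $\dom\partial\Phi_{\mathcal B_{j_{k+1}}}$, and likewise reconcile $u_0\in\dom\partial\Phi$ with $\dom\partial\Phi_{\mathcal B_{j_1}}$ at the initial time. Here the inclusion in (ii), combined with the automatic inclusion $\bigcap_j\dom\partial\Phi_{\mathcal B_j}\subset\dom\partial\Phi$ coming from the representation $\Phi=\sum_j\pi_j\Phi_{\mathcal B_j}$, should be used to pin the relevant subdifferential domains together so that the minimal-section bound survives each restart; making this step rigorous is the part I expect to require the most care.
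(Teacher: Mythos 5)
Your construction follows the same route as the paper's: existence and uniqueness by induction over the slabs $[t_{k-1},t_k]$, applying the Brezis--Komura theory for initial data in the closure of the effective domain (the paper cites \cite[Theorem 17.2.3]{SoBV} for exactly this) together with the observation that \eqref{assu:contention_domains} makes all closures $\overline{\dom\Phi_{\mathcal B_j}}$ coincide with $\overline{\dom\Phi}$, then gluing; part (i) via the energy identity restarted from $\dom\Phi$, using the regularizing effect and the hypothesis $\dom\partial\Phi_{\mathcal B_j}\subset\dom\Phi$ (the paper invokes \cite[Theorem 4.11]{Barbu2010} for this step, with $\inf\Phi_{\mathcal B_j}>-\infty$ from \eqref{assu:contention_domains} playing the role you assign it); and part (ii) via the minimal-section Lipschitz bound (the paper invokes \cite[Theorem 17.2.2]{SoBV}). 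The architecture and the domain bookkeeping are the paper's.

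One correction, in precisely the step you flagged as requiring the most care. For (ii) you need, at each restart, $v_\varepsilon(t_k)\in\dom\partial\Phi_{\mathcal B_{j_{k+1}}}$; hypothesis (ii) gives $v_\varepsilon(t_k)\in\dom\partial\Phi_{\mathcal B_{j_k}}\subset\dom\partial\Phi$, so what remains is the inclusion $\dom\partial\Phi\subset\dom\partial\Phi_{\mathcal B_j}$ for every $j$ (the same inclusion is needed to start the first slab from $u_0\in\dom\partial\Phi$). The inclusion you propose to use, $\bigcap_j\dom\partial\Phi_{\mathcal B_j}\subset\dom\partial\Phi$, is indeed automatic from the representation $\Phi=\sum_j\pi_j\Phi_{\mathcal B_j}$, but it points in the wrong direction and cannot deliver the restart. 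What does deliver it is the standing sum rule \eqref{assu:sum_rule}, assumed throughout Section \ref{sec:well_posedness_convergence}: for $u\in\dom\partial\Phi$ the set $\sum_{j=1}^m\pi_j\,\partial\Phi_{\mathcal B_j}(u)$ equals the nonempty set $\partial\Phi(u)$, and a Minkowski sum with positive weights is nonempty only if every summand is, whence $\partial\Phi_{\mathcal B_j}(u)\neq\emptyset$ for all $j$, i.e. $\dom\partial\Phi\subset\bigcap_{j}\dom\partial\Phi_{\mathcal B_j}$. With this one-line substitution your induction for (ii) closes, and the argument coincides with the paper's.
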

\begin{proof}
    This follows from Theorem \ref{Existence} and Proposition \ref{Regularity}.
\end{proof}

To illustrate our mini-batch gradient descent method, \Cref{alg:mini-batch-descent} presents a step-by-step description of the dynamics construction. For the sake of clarity, we present the algorithm in a bounded time interval $(0,T)$. However, it can be simply modified to the interval $(0,\infty)$ by choosing $K=\infty$.

\begin{algorithm}[H]
\caption{mini-batch descent flow (finite time)}\label{alg:mini-batch-descent}
\begin{algorithmic}[1]
\Require Initial data $u_0$, step size $\varepsilon$, final time $T$, batches $\{\mathcal{B}_1, \dots, \mathcal{B}_m\}$, probabilities $\{\pi_1, \dots, \pi_m\}$

\State Initialize $v_{\varepsilon}(0) \gets u_0$

\State Define  $K\gets \lfloor T/\varepsilon \rfloor$

\For{$k \gets 1$ to $K$}
    \State Sample a batch index $j_k$ according to probabilities $\{\pi_1, \dots, \pi_m\}$
    \State Let $\mathcal{B} \gets \mathcal{B}_{j_k}$
    \State Compute the subgradient $\partial \Phi_{\mathcal{B}}(u) \gets  \partial\left(\frac{1}{|B|}\sum_{i\in B} \Phi_{i}(u)\right)$
    \State Solve $ \dot{v}_{\varepsilon} \in - \partial \Phi_{\mathcal{B}}(v_{\varepsilon})$ in $[t_{k-1},t_k)$
\EndFor

\Return $v_{\varepsilon}$
\end{algorithmic}
\end{algorithm}
We mention that a uniform grid was chosen for notational convenience; the arguments
of this article should still work if an unstructured time mesh we to be used.

	\subsection{Convergence}
	For each $t\in[0,+\infty)$, define $k_t:=\min\{k\in\mathbb N:\,\, t<t_{k}\}$.
	We can rewrite the mini-batch descent flow as 
	\begin{align}\label{mb_gf}
		\left\{ \begin{array}{l} \dot v_{\varepsilon}(t)  \in\displaystyle-\partial\Phi_{\mathcal B_{j_{k_t}}}(v_{\varepsilon}(t))\quad t\in(0,\infty),\\  v_{\varepsilon}(0)=u_0. \end{array} \right.
	\end{align}
	In order for the mini-batch descent to be a non-biased estimator, in the sense that the expected value of the randomly chosen subgradient coincides with the subdifferential of $\Phi$ at each step, the sum rule for subdifferentials  is assumed to hold, i.e., 
	\begin{align}\label{assu:sum_rule}
		\partial \Phi(u) = \sum_{j=1}^m \pi_{j}\, \partial \Phi_{\mathcal B_{j}}(u)\quad \text{for all $u\in\dom\partial\Phi$.}
	\end{align}
	This ensures that for each $j\in\{1,\dots,m\}$ there exists $\xi_j:\dom \partial \Phi\to \dom\partial \Phi_{\mathcal B_{j}}$ such that
	\begin{align}\label{eq:represent_subdif}
		\partial \Phi(u)^\circ = \sum_{j=1}^{m} \pi_j\xi_j(u)\quad\forall u\in\dom\partial\Phi,
	\end{align}
	where $\partial \Phi(u)^\circ = \argmin\{\big\|\xi\big\|_{\mathcal H}:\, \xi\in\partial\Phi(u)\}$ for all $u\in\dom\partial\Phi$. We see then that for each $k\in\mathbb N$,
	\begin{align*}
		\mathbb E \xi_{j_k}(u) = \partial \Phi(u)^\circ\quad \forall u\in\dom\partial \Phi.
	\end{align*}
	In order to provide a variance measure for mini-batch descent,  consider the function $\Lambda:\mathcal H\to \mathbb R$ given by 
	\begin{align}\label{Lambda}
		\Lambda(u):= {\sum_{j=1}^m \pi_j\big\|\xi_{j}(u) -\partial \Phi(u)^\circ\big\|^2_{\mathcal H}}.
	\end{align}
	This function is the usual quantifier of variance used to provide bounds and estimates in stochastic gradient descent algorithms. 
	We observe that $	\Var\left[ \xi_{j_k}\right]=\Lambda$ for all $k\in\mathbb N$. 
	Moreover, this function can be used to bound the gap between the gradient and mini-batch descent flows.

	\begin{prpstn}
		Let $u:[0,+\infty)\to\mathcal H$ be the solution of the gradient flow equation (\ref{gf_eq}) and $v_{\varepsilon}:[0,+\infty)\to\mathcal H$ the solution of the mini-batch descent equation (\ref{mb_gf}). Then, 
		\begin{align*}
			\big\| v_{\varepsilon}(t)-u(t)\big\|_{\mathcal H} \le \max_{j\in\{1,\dots,m\}}\pi_{j}^{-\frac{1}{2}} t^{\frac{1}{2}} \Big(\int_{0}^t \Lambda \big(u(s)\big)\, ds\Big)^{\frac{1}{2}}\quad \forall t\in[0,+\infty).
		\end{align*}
		In particular, if  $\Lambda:[0,+\infty)\to\mathbb R$ is locally integrable, then the  $v_{\varepsilon}$ is locally bounded in $L^2\big([0,+\infty);\mathcal H\big)$ for every $\varepsilon>0$.
	\end{prpstn}
	\begin{proof}
		It follows from Proposition \ref{essprop} and Corollary \ref{corobound}.
	\end{proof}
	A natural question that arises is whether the function $\Lambda\circ u: [0,+\infty)\to \mathbb R$ is locally bounded or at least locally integrable. We discuss these issues in the Appendix \ref{app:variance} as well as the dependence of $\Lambda$ on the chosen decomposition of the minimal norm subgradient.
 

Under additional assumptions on the mini-batch potentials $\Phi_{\mathcal B_{1}},\dots,\Phi_{\mathcal B_{m}}$, we can provide a better estimate in expected value. The following two theorems guarantee, with different assumptions, the convergence in expectation of the mini-batch descent flow to the gradient flow as $\varepsilon \to 0^+$.

	\begin{thrm}\label{result0}
		Let $u:[0,+\infty)\to\mathcal H$ be the solution of the gradient flow equation (\ref{gf_eq}) and $v_{\varepsilon}:[0,+\infty)\to\mathcal H$ the solution of the mini-batch descent flow equation (\ref{mb_gf}).  Suppose that $\Lambda\circ u:[0,+\infty)\to \mathcal H$ is locally integrable, and that for each $j\in\{1,\dots,m\}$,  $\Phi_{\mathcal B_{j}}$ is locally bounded in its effective domain.  Then, for each any $t\in[0,+\infty)$, $\{\varepsilon^{\frac{1}{2}}\dot v_{\varepsilon}\}_{\varepsilon>0}$ is bounded in $L^2\big([0,t);\mathcal H\big)$ and \begin{align*}
			\mathbb E\big\|v_\varepsilon(t)-u(t)\big\|^2_{\mathcal H}\le  2\varepsilon^{\frac{1}{2}} \big\|\varepsilon^{\frac{1}{2}}\dot v_\varepsilon\big\|_{L^2([0,t);\mathcal H)} \Big(\int_{0}^t \Lambda\big(u(s)\big)\, ds\Big)^{\frac{1}{2}}\quad \forall \varepsilon>0.
		\end{align*} 
		In particular, for any $T>0$ there exists $c_T>0$ such that 
		\begin{align*}
			\sup_{t\in[0,T]}\mathbb E\big\|v_\varepsilon(t)-u(t)\big\|^2_{\mathcal H}\le c_T	\varepsilon^{\frac{1}{2}}\quad \forall\varepsilon>0.
		\end{align*}
	\end{thrm}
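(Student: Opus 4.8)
The plan is to control $\|v_\varepsilon(t)-u(t)\|_{\mathcal H}^2$ by an energy-type computation and to extract the factor $\varepsilon^{1/2}$ from the fact that, on each interval $[t_{k-1},t_k)$, the mini-batch flow drifts only a distance of order $\varepsilon^{1/2}$ away from its value at the switching time $t_{k-1}$. Since $\dot u=-\partial\Phi(u)^\circ$ and $\dot v_\varepsilon\in-\partial\Phi_{\mathcal B_{j_{k_t}}}(v_\varepsilon)$ almost everywhere, integrating $\tfrac12\tfrac{d}{dt}\|v_\varepsilon-u\|^2=\langle\dot v_\varepsilon-\dot u,v_\varepsilon-u\rangle$ from the common initial datum gives $\tfrac12\|v_\varepsilon(t)-u(t)\|^2=\int_0^t\langle\partial\Phi(u(s))^\circ-\zeta(s),v_\varepsilon(s)-u(s)\rangle\,ds$, where $\zeta(s):=-\dot v_\varepsilon(s)\in\partial\Phi_{\mathcal B_{j_{k_s}}}(v_\varepsilon(s))$. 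Inserting the selection $\xi_{j_{k_s}}(u(s))\in\partial\Phi_{\mathcal B_{j_{k_s}}}(u(s))$ from \eqref{eq:represent_subdif} and using monotonicity of $\partial\Phi_{\mathcal B_{j_{k_s}}}$, the term $\langle\xi_{j_{k_s}}(u)-\zeta,v_\varepsilon-u\rangle\le0$ is discarded, leaving $\tfrac12\|v_\varepsilon(t)-u(t)\|^2\le\int_0^t\langle\partial\Phi(u(s))^\circ-\xi_{j_{k_s}}(u(s)),v_\varepsilon(s)-u(s)\rangle\,ds$.

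The decisive step is to split $v_\varepsilon(s)-u(s)=\big(v_\varepsilon(t_{k_s-1})-u(s)\big)+\big(v_\varepsilon(s)-v_\varepsilon(t_{k_s-1})\big)$ and take expectations. For the first piece I condition on the $\sigma$-algebra generated by $j_1,\dots,j_{k_s-1}$: the factor $v_\varepsilon(t_{k_s-1})-u(s)$ is measurable with respect to it, while $j_{k_s}$ is independent of it, so the unbiasedness $\mathbb E\,\xi_{j_{k_s}}(u(s))=\partial\Phi(u(s))^\circ$ forces this contribution to vanish after a Fubini interchange (justified by the a priori integrability below). For the second piece I apply Cauchy--Schwarz on $\Omega\times[0,t)$. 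The variance identity $\mathbb E\,\|\xi_{j_{k_s}}(u(s))-\partial\Phi(u(s))^\circ\|^2=\sum_{j}\pi_j\|\xi_j(u(s))-\partial\Phi(u(s))^\circ\|^2=\Lambda(u(s))$ produces the factor $\big(\int_0^t\Lambda(u)\big)^{1/2}$, while the displacement bound $\|v_\varepsilon(s)-v_\varepsilon(t_{k_s-1})\|\le\varepsilon^{1/2}\big(\int_{t_{k_s-1}}^{t_{k_s}}\|\dot v_\varepsilon\|^2\big)^{1/2}$, integrated over $s$ and summed over the intervals, yields $\mathbb E\int_0^t\|v_\varepsilon(s)-v_\varepsilon(t_{k_s-1})\|^2\,ds\le\varepsilon^2\,\mathbb E\int_0^t\|\dot v_\varepsilon\|^2$, i.e.\ the factor $\varepsilon^{1/2}\|\varepsilon^{1/2}\dot v_\varepsilon\|_{L^2}$. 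Collecting these terms gives the claimed inequality.

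It remains to prove that $\{\varepsilon^{1/2}\dot v_\varepsilon\}_{\varepsilon>0}$ is bounded in $L^2([0,t);\mathcal H)$, which simultaneously upgrades the estimate to $\sup_{[0,T]}\mathbb E\|v_\varepsilon-u\|^2\le c_T\varepsilon^{1/2}$. On each interval the energy dissipation identity reads $\int_{t_{k-1}}^{t_k}\|\dot v_\varepsilon\|^2=\Phi_{\mathcal B_{j_k}}(v_\varepsilon(t_{k-1}))-\Phi_{\mathcal B_{j_k}}(v_\varepsilon(t_k))$, and summing over $k$ does not telescope because the potential changes at every switch. I would rearrange the sum so that only the single-point differences $\Phi_{\mathcal B_{j_k}}(v_\varepsilon(t_{k-1}))-\Phi_{\mathcal B_{j_{k-1}}}(v_\varepsilon(t_{k-1}))$, together with one initial and one final term, survive, and then bound each of the $\sim t/\varepsilon$ surviving differences by a constant. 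This is where the hypotheses enter: the pathwise bound of the preceding Proposition (finite because $\Lambda\circ u$ is locally integrable) confines every trajectory $v_\varepsilon$ to a bounded tube around $u$ on $[0,T]$, and local boundedness of each $\Phi_{\mathcal B_j}$ on its effective domain, together with $\inf\Phi_i>-\infty$, bounds the values $\Phi_{\mathcal B_j}(v_\varepsilon(t_{k-1}))$ uniformly; multiplying the resulting $O(t/\varepsilon)$ bound by $\varepsilon$ gives a constant independent of $\varepsilon$. Here $\|\varepsilon^{1/2}\dot v_\varepsilon\|_{L^2([0,t);\mathcal H)}$ is understood as the norm on $\Omega\times[0,t)$, so that the pathwise estimate and the Cauchy--Schwarz step are compatible.

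The conceptual heart of the argument is the vanishing of the conditional term, which is precisely where unbiasedness of the mini-batch selection is used; this is where I would be most careful about measurability of $s\mapsto\xi_{j_{k_s}}(u(s))$ and about the Fubini interchange. The genuinely technical obstacle, however, is the uniform $L^2$ bound on $\varepsilon^{1/2}\dot v_\varepsilon$: the failure of the dissipation sum to telescope across batch switches means the bound cannot come from a single energy comparison and must instead be paid for, step by step, by the local boundedness of the sub-potentials along the confined trajectory.
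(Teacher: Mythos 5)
Your proposal is correct and follows essentially the same route as the paper: your expectation estimate reproduces Theorem \ref{thrcc} (monotonicity, splitting $v_\varepsilon(s)-u(s)$ at the last switching time $t_{k_s-1}$, unbiasedness and independence killing the conditioned term, then Cauchy--Schwarz with $\mathbb E\|\chi\|^2=\Lambda\circ u$ and the displacement bound $\mathbb E\int_0^t\|v_\varepsilon(s)-v_\varepsilon(t_{k_s-1})\|^2\,ds\le\varepsilon^2\,\mathbb E\int_0^t\|\dot v_\varepsilon\|^2$), while your uniform bound on $\varepsilon^{1/2}\dot v_\varepsilon$ is Lemma \ref{lemmaintervals0}(i), obtained likewise from the per-interval dissipation identity plus the confinement of $v_\varepsilon$ to a bounded tube around $u$ via Proposition \ref{essprop} and Corollary \ref{corobound}. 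The only cosmetic difference is your Abel-type rearrangement of the dissipation sum, which the paper renders unnecessary by treating local boundedness as $0$-H\"older continuity: each interval's dissipation $\Phi_{\mathcal B_{j_k}}(v_\varepsilon(s))-\Phi_{\mathcal B_{j_k}}(v_\varepsilon(t))$ involves the \emph{same} potential at two points of the confining ball, so it is bounded directly by the oscillation constant $L_T$ there (no telescoping issue, and no need to evaluate $\Phi_{\mathcal B_{j_k}}$ exactly at switching points, where membership in $\dom\Phi_{\mathcal B_{j_k}}$ is only guaranteed up to closure), after which the $O(t/\varepsilon)$ count times $\varepsilon$ yields the bound exactly as you describe.
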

	\begin{proof}
		It follows from Lemma \ref{lemmaintervals0} and Corollary \ref{corocc}.
	\end{proof}
	It is possible to improve the rate of convergence by replacing the boundedness assumption on the mini-batch potentials with local Lipschitz continuity. We mention that this is not a big  assumption in many cases, as for example, convex functions defined on a finite dimensional  space are automatically locally Lipschitz continuous; however this is not the case for infinite dimensional spaces (due to the celebrated Hahn-Banach Theorem there exist unbounded linear functionals). Also note that there are convex functions that are locally bounded in its effective domain, but not locally Lipschitz, even if the underlying Hilbert space is finite  dimensional\footnote{The function $\psi:\mathbb R\to\mathbb R\cup\{+\infty\}$, given by $\psi(u):=-2\sqrt{u}$ if $u\ge0$ and $+\infty$ otherwise, furnishes an example.}.
 
	\begin{thrm}\label{resultcontinuouscase}
			Let $u:[0,+\infty)\to\mathcal H$ be the solution of the gradient flow equation (\ref{gf_eq}) and $v_{\varepsilon}:[0,+\infty)\to\mathcal H$ the solution of the mini-batch descent flow equation (\ref{mb_gf}).  Suppose that $\Lambda\circ u:[0,+\infty)\to \mathcal H$ is locally integrable, and that for each $j\in\{1,\dots,m\}$,  $\Phi_{\mathcal B_{j}}$ is locally Lipschitz in its effective domain.  Then, for any $t\in[0,+\infty)$, $\{\dot v_{\varepsilon}\}_{\varepsilon>0}$ is bounded in $L^2\big([0,t);\mathcal H\big)$ and \begin{align*}
			\mathbb E\big\|v_\varepsilon(t)-u(t)\big\|^2_{\mathcal H}\le  2\varepsilon \big\|\dot v_\varepsilon\big\|_{L^2([0,t);\mathcal H)} \Big(\int_{0}^t \Lambda\big(u(s)\big)\, ds\Big)^{\frac{1}{2}}\quad \forall \varepsilon>0.
		\end{align*} 
		In particular, for any $T>0$ there exists $c_T>0$ such that 
		\begin{align*}
			\sup_{t\in[0,T]}\mathbb E\big\|v_\varepsilon(t)-u(t)\big\|^2_{\mathcal H}\le c_T	\varepsilon\quad \forall\varepsilon>0.
		\end{align*}
	\end{thrm}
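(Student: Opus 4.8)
The plan is to compare the two flows through the evolution of the squared distance, using maximal monotonicity of the subdifferentials together with a conditioning argument that cancels the leading-order error in expectation. On each window $[t_{k-1},t_k)$ both $u$ and $v_\varepsilon$ are absolutely continuous, so writing $\dot v_\varepsilon(t)=-\eta_\varepsilon(t)$ with $\eta_\varepsilon(t)=\partial\Phi_{\mathcal B_{j_{k_t}}}(v_\varepsilon(t))^\circ$ and $\dot u(t)=-\partial\Phi(u(t))^\circ$, I would compute $\tfrac12\tfrac{d}{dt}\|v_\varepsilon(t)-u(t)\|_{\mathcal H}^2=-\langle\eta_\varepsilon(t)-\partial\Phi(u(t))^\circ,\,v_\varepsilon(t)-u(t)\rangle$. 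I then insert the synchronized subgradient $\xi_{j_{k_t}}(u(t))\in\partial\Phi_{\mathcal B_{j_{k_t}}}(u(t))$ from the representation \eqref{eq:represent_subdif} and split $\eta_\varepsilon-\partial\Phi(u)^\circ=(\eta_\varepsilon-\xi_{j_{k_t}}(u))+(\xi_{j_{k_t}}(u)-\partial\Phi(u)^\circ)$. Monotonicity of $\partial\Phi_{\mathcal B_{j_{k_t}}}$ makes the first bracket yield a nonnegative term $\langle\eta_\varepsilon-\xi_{j_{k_t}}(u),v_\varepsilon-u\rangle\ge0$, which is discarded; integrating from $0$ to $t$ (with $v_\varepsilon(0)=u(0)=u_0$ and $v_\varepsilon$ continuous across windows) leaves $\tfrac12\mathbb E\|v_\varepsilon(t)-u(t)\|^2\le-\int_0^t\mathbb E\langle\xi_{j_{k_s}}(u(s))-\partial\Phi(u(s))^\circ,\,v_\varepsilon(s)-u(s)\rangle\,ds$.

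The decisive step is the further split $v_\varepsilon(s)-u(s)=(v_\varepsilon(t_{k_s-1})-u(s))+(v_\varepsilon(s)-v_\varepsilon(t_{k_s-1}))$. Conditioning on the $\sigma$-algebra $\mathcal F_{k_s-1}$ generated by $j_1,\dots,j_{k_s-1}$, the value $v_\varepsilon(t_{k_s-1})$ and the deterministic $u(s)$ are measurable, while $\mathbb E[\xi_{j_{k_s}}(u(s))\mid\mathcal F_{k_s-1}]=\sum_j\pi_j\xi_j(u(s))=\partial\Phi(u(s))^\circ$ by \eqref{eq:represent_subdif}; hence the term carrying $v_\varepsilon(t_{k_s-1})-u(s)$ vanishes in expectation. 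For the surviving increment term I apply Cauchy--Schwarz in $dP\otimes ds$, use the identity $\mathbb E\|\xi_{j_{k_s}}(u(s))-\partial\Phi(u(s))^\circ\|^2=\Lambda(u(s))$ (precisely the definition \eqref{Lambda}), and bound $\|v_\varepsilon(s)-v_\varepsilon(t_{k_s-1})\|^2\le(s-t_{k_s-1})\int_{t_{k_s-1}}^{t_{k_s}}\|\dot v_\varepsilon\|^2$, so that estimating $(s-t_{k_s-1})\le\varepsilon$ and summing the windows gives $\int_0^t\mathbb E\|v_\varepsilon(s)-v_\varepsilon(t_{k_s-1})\|^2\,ds\le\varepsilon^2\,\mathbb E\|\dot v_\varepsilon\|_{L^2([0,t);\mathcal H)}^2$. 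Combining yields exactly the displayed bound with constant $2$ (reading $\|\dot v_\varepsilon\|_{L^2}$ as its $L^2(\Omega)$ norm, which the next step turns into a deterministic bound). I note that this estimate coincides formally with the one in Theorem \ref{result0}, since $2\varepsilon^{1/2}\|\varepsilon^{1/2}\dot v_\varepsilon\|_{L^2}=2\varepsilon\|\dot v_\varepsilon\|_{L^2}$; the genuine novelty therefore lies entirely in improving the bound on $\|\dot v_\varepsilon\|_{L^2}$.

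Accordingly, the heart of the proof under the stronger hypothesis is to show that $\{\dot v_\varepsilon\}_{\varepsilon>0}$, rather than only $\{\varepsilon^{1/2}\dot v_\varepsilon\}$, is bounded in $L^2([0,t);\mathcal H)$ uniformly in $\varepsilon$. Here I would first invoke the pathwise a priori estimate of the first Proposition, $\|v_\varepsilon(t)-u(t)\|\le\max_j\pi_j^{-1/2}t^{1/2}(\int_0^t\Lambda(u))^{1/2}$, valid for every realization and every $\varepsilon$; since $u$ is continuous, this confines all trajectories $v_\varepsilon$ on $[0,T]$ to a single fixed bounded set $K$. On each window $v_\varepsilon$ is the gradient flow of the convex $\Phi_{\mathcal B_{j_k}}$, so $\dot v_\varepsilon(t)=-\partial\Phi_{\mathcal B_{j_k}}(v_\varepsilon(t))^\circ$ and $t\mapsto\|\dot v_\varepsilon(t)\|$ is nonincreasing on that window; local Lipschitz continuity of $\Phi_{\mathcal B_{j_k}}$ on the bounded set $K\cap\dom\Phi_{\mathcal B_{j_k}}$ bounds the minimal-norm subgradient at the window's starting point $v_\varepsilon(t_{k-1})$ by a common constant $L=\max_jL_j$, whence $\|\dot v_\varepsilon(t)\|\le L$ for a.e. $t\in[0,T]$. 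Thus $\|\dot v_\varepsilon\|_{L^2([0,T);\mathcal H)}\le L\sqrt T$ uniformly in $\varepsilon$, and inserting this into the displayed inequality gives $\sup_{t\in[0,T]}\mathbb E\|v_\varepsilon(t)-u(t)\|^2\le 2L\sqrt T(\int_0^T\Lambda(u))^{1/2}\,\varepsilon=:c_T\varepsilon$.

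The main obstacle I anticipate is exactly this last passage from local Lipschitz continuity to the uniform pointwise bound on $\|\dot v_\varepsilon\|$. Two points require care: one must guarantee a single Lipschitz constant serving all batches and all $\varepsilon$ simultaneously, for which the deterministic confinement furnished by the first Proposition is essential; and one must control subgradients at points possibly lying on the boundary of the effective domain, where a naive Lipschitz argument tests only admissible directions. The monotone decay of $\|\dot v_\varepsilon(t)\|$ along each flow window, which reduces the estimate to its value at the starting point $v_\varepsilon(t_{k-1})$, is the device I would use to sidestep the boundary issue. The conditioning bookkeeping of the second paragraph, while probabilistically routine, likewise demands verifying that $v_\varepsilon(t_{k_s-1})$ is independent of the batch $j_{k_s}$ active on the current window.
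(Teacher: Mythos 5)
Your first two paragraphs reproduce, essentially step for step, the paper's own proof of the expectation estimate (Theorem \ref{thrcc}): the same monotonicity inequality, the same insertion of the synchronized subgradient $\xi_{j_k}(u)$ from \eqref{eq:represent_subdif}, the same split of $v_\varepsilon(s)-u(s)$ through the window's left endpoint, the same zero-mean cancellation using independence of $v_\varepsilon(t_{k-1})$ from $j_k$, and the same bound $\int_0^t\mathbb E\|v_\varepsilon(s)-v_\varepsilon(t_{k_s-1})\|^2\,ds\le\varepsilon^2\int_0^t\mathbb E\|\dot v_\varepsilon\|^2$ on the increments. Your remark that this estimate coincides formally with the one in Theorem \ref{result0}, so that the entire content of the theorem lies in upgrading the bound on $\{\dot v_\varepsilon\}_{\varepsilon>0}$, is also exactly how the paper organizes the proof (Theorem \ref{thrcc} combined with Lemma \ref{lemmaintervals0}).

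Where you genuinely diverge is in that derivative bound, and there your argument has a gap sitting precisely where you flag it. The paper (Lemma \ref{lemmaintervals0} with $\alpha=1$) never evaluates subgradients at window endpoints: it integrates the energy identity $\frac{d}{d\tau}\Phi_{\mathcal B_{j_k}}(v_\varepsilon(\tau))=-\|\dot v_\varepsilon(\tau)\|_{\mathcal H}^2$ over a window, bounds the resulting potential difference $\Phi_{\mathcal B_{j_k}}(v_\varepsilon(s))-\Phi_{\mathcal B_{j_k}}(v_\varepsilon(t))\le L_T\|v_\varepsilon(s)-v_\varepsilon(t)\|_{\mathcal H}$ using the confinement from Corollary \ref{corobound} (as you do), and absorbs $\int_s^t\|\dot v_\varepsilon\|_{\mathcal H}$ by Cauchy--Schwarz to obtain $\int_s^t\|\dot v_\varepsilon\|_{\mathcal H}^2\le L_T^2(t-s)$; summing windows gives the uniform $L^2$ bound, and the same computation covers the $\alpha$-H\"older case needed for Theorems \ref{result0} and \ref{Asyncc}. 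You instead claim $\|\dot v_\varepsilon(t)\|_{\mathcal H}\le L$ pointwise, via monotone decay of $\|\dot v_\varepsilon\|_{\mathcal H}$ along each window reduced to the starting point $v_\varepsilon(t_{k-1})$. But the construction (see the induction in Lemma \ref{lemwpsgf}) only guarantees $v_\varepsilon(t_{k-1})\in\overline{\dom\Phi}\subset\overline{\dom\Phi_{\mathcal B_{j_k}}}$, so $\partial\Phi_{\mathcal B_{j_k}}(v_\varepsilon(t_{k-1}))$ may be empty and the right derivative at $t_{k-1}$ need not exist: the monotone-decay device cannot even be formulated at the starting point, so it does not sidestep the boundary issue — it lands on it. The argument is repairable: under your hypothesis (Lipschitz on bounded subsets of the domain), lower semicontinuity forces $\dom\Phi_{\mathcal B_j}$ to be closed, so the starting point does lie in the domain; and the identity between $\|\partial\Phi_{\mathcal B_j}(v)^\circ\|_{\mathcal H}$ and the local slope of a convex lsc function — which is what answers your own worry that a naive Lipschitz bound ``tests only admissible directions'' — then yields $\|\partial\Phi_{\mathcal B_{j_k}}(v)^\circ\|_{\mathcal H}\le L_T$ at every point of the confinement ball, giving $\|\dot v_\varepsilon\|_{\mathcal H}\le L_T$ a.e.\ with no monotone decay needed at all. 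Both facts (domain closedness and the slope identity), however, must be supplied and are absent from your proposal; the paper's energy-dissipation argument avoids them entirely and is the more economical route.
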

	\begin{proof}
		It follows from Lemma \ref{lemmaintervals0} and Theorem \ref{thrcc}.
	\end{proof}
Previous results will be employed in more particular settings, especially projected dynamics resulting from constrained optimization (Section \ref{projdyn}), and sparse inversion problems arising in data science (Section \ref{Sparseinv}).

	\subsection{Mini-batch minimizing movement}\label{sec:RMM}
For some applications, such as domain decomposition \cite{Eisenmann2024randomized}, it is preferable to remove assumptions on the potential, such as local boundedness. The possible lack of absolute continuity of the solution of the mini-batch descent flow generates several difficulties in the analysis; therefore, it is more convenient to analyze it in a discrete setting. We begin by introducing a randomized variant of the so-called minimizing movement scheme; this is essentially a stochastic proximal point algorithm.

\smallbreak 	
	
	It can be proved that there exists a unique $w_1\in\argmin_{w\in\mathcal H}\{\Phi_{\mathcal B_{j_1}}(w)+\frac{1}{2\varepsilon}\big\|w-u_{0}\big\|_{\mathcal H}^2\}$; see, e.g., \cite[Proposition 17.2.1]{SoBV}. This procedure can continue in an iterative way, yielding a sequence  $\{w_k\}_{k\in\mathbb N}$ satisfying
	\begin{align}\label{gf_mm0}
		w_{k+1}\in\argmin_{w\in\mathcal H}\left\lbrace \Phi_{\mathcal B_{j_{k+1}}}(w)  + \displaystyle\frac{1}{2\varepsilon}\big\|w-w_{k}\big\|_{\mathcal H}^2 \right\rbrace\quad \forall k\in\mathbb N.
	\end{align}
	Consider the function $w_{\varepsilon}:[0,+\infty)\to \mathcal H$ given by $w_{\varepsilon}(t):=w_{k_{t}}$. We see that for each $k\in\mathbb N$,
	\begin{align}\label{wepsilon}
		w_{\varepsilon}(t)= w_{k}\quad \forall t\in[t_{k-1},t_k). 
	\end{align}
	We say that $w : [0,+\infty) \to \mathcal H$ is a mini-batch minimizing movement associated with the mini-batch potentials $\Phi_{\mathcal B_1},\dots,\Phi_{\mathcal B_{m}}$ if for all $t\in[0,+\infty)$,
	\[ \mathbb E\big\|w_\varepsilon(t) - w(t)\big\|_{\mathcal H}^2\, {\longrightarrow}\, 0 \quad \text{as}\quad \varepsilon\to0^+. \]

Below, \Cref{alg:minimizing_movement} illustrate the construction of the sequence $\{w_k\}_{k=1}^K$, for some fixed $K>0$. Here, for the sake of clarity, we illustrate the construction for a fixed $K$. However, the same iteration can be done for $K=\infty$.

\begin{algorithm}[H]
\caption{Mini-Batch Minimizing Movement (finite sequence)}\label{alg:minimizing_movement}
\begin{algorithmic}[1]
\Require Initial data $u_0$, fixed $\varepsilon$, iterations $K$, batches $\{\mathcal{B}_1, \dots, \mathcal{B}_m\}$, probabilities $\{\pi_1, \dots, \pi_m\}$

\State Initialize $w_0 \gets u_0$

\For{$k \gets 1$ to $K$}
    \State Sample a batch index $j_k$ according to probabilities $\{\pi_1, \dots, \pi_m\}$
    \State Let $\mathcal{B} \gets \mathcal{B}_{j_k}$
    \State Let $\Phi_{\mathcal{B}}(u) \gets \frac{1}{|B|}\sum_{i\in B} \Phi_{i}(u)$
    \State Solve $w_{k}\in\argmin_{w\in\mathcal H}\left\lbrace \Phi_{\mathcal B}(w)  + \displaystyle\frac{1}{2\varepsilon}\big\|w-w_{k-1}\big\|_{\mathcal H}^2 \right\rbrace$
    \State Let $w_{\varepsilon}(t)\gets w_{k}$ in $[t_{k-1},t_k)$
\EndFor

\Return $w_{\varepsilon}$
\end{algorithmic}
\end{algorithm}

	The following result gives sufficient conditions for a gradient flow solution to be a mini-batch minimizing movement and, moreover, provides an error estimate.
	\begin{thrm}\label{th:conv_mm}
		Let $u:[0,+\infty)\to\mathcal H$ be the solution of the gradient flow equation (\ref{gf_eq}) and $w_{\varepsilon}:[0,+\infty)\to\mathcal H$ the function generated by the proximal sequence (\ref{gf_mm0}). Assume that $\Lambda\circ u:[0,+\infty)\to \mathcal H$ is locally  bounded. If $u$ belongs to $C^1\big([0,+\infty);\mathcal H\big)$, then $u$ is a mini-batch minimizing movement.
		Moreover, if there exists $\alpha\in(0,1]$ such that $\dot u$ is locally $\alpha$-H\"older continuous, then for any $T>0$ there exists $c_T>0$ such that 
		\begin{align*}
			\sup_{t\in[0,T]}\mathbb E\big\|w_\varepsilon(t) - u(t)\big\|_{\mathcal H}^2\le c_T\big(\varepsilon^{2\alpha}+\varepsilon^2+\varepsilon\big)\quad \forall\varepsilon>0.
		\end{align*}
	\end{thrm}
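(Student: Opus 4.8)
The plan is to recast the proximal scheme \eqref{gf_mm0} as a randomized implicit Euler step and run a discrete energy estimate against the flow, closing it with discrete Gr\"onwall. Writing the first-order optimality condition for \eqref{gf_mm0}, set $g_{k+1}:=\varepsilon^{-1}(w_k-w_{k+1})$, so that $g_{k+1}\in\partial\Phi_{\mathcal B_{j_{k+1}}}(w_{k+1})$ and $w_{k+1}=w_k-\varepsilon g_{k+1}$. Since $u\in C^1$, we have $\dot u(t)=-\partial\Phi(u(t))^\circ$ for every $t$, so $u(t)\in\dom\partial\Phi$ throughout and the selections $\xi_j(u(t))$ from \eqref{eq:represent_subdif} are available. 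Abbreviate $d_k:=w_k-u(t_k)$ (note $d_0=0$), $\bar\eta_k:=\varepsilon^{-1}\int_{t_k}^{t_{k+1}}\partial\Phi(u(s))^\circ\,ds$, and $\zeta_k:=\xi_{j_{k+1}}(u(t_{k+1}))\in\partial\Phi_{\mathcal B_{j_{k+1}}}(u(t_{k+1}))$.

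Next I would derive the basic energy identity. Since $d_{k+1}-d_k=-\varepsilon(g_{k+1}-\bar\eta_k)$, expanding $\|d_{k+1}\|_{\mathcal H}^2$, invoking monotonicity of $\partial\Phi_{\mathcal B_{j_{k+1}}}$ applied to the pairs $(w_{k+1},g_{k+1})$ and $(u(t_{k+1}),\zeta_k)$ to replace $g_{k+1}$ by $\zeta_k$ in the leading inner product, and then substituting $d_{k+1}=d_k-\varepsilon(g_{k+1}-\bar\eta_k)$, one may complete the square so that the quadratic term $-\varepsilon^2\|g_{k+1}-\bar\eta_k\|_{\mathcal H}^2$ absorbs the cross term. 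This leaves
\[
\|d_{k+1}\|_{\mathcal H}^2\le \|d_k\|_{\mathcal H}^2-2\varepsilon\langle \zeta_k-\bar\eta_k,\,d_k\rangle_{\mathcal H}+\varepsilon^2\|\zeta_k-\bar\eta_k\|_{\mathcal H}^2.
\]
This is the crucial step: it eliminates the discrete velocity $g_{k+1}$, for which no a priori bound is assumed, and is exactly where I expect the main difficulty to lie.

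I would then take expectations using unbiasedness. Conditioning on $\mathcal F_k:=\sigma(j_1,\dots,j_k)$ and using independence of $j_{k+1}$ together with \eqref{eq:represent_subdif} and \eqref{Lambda} gives $\mathbb E[\zeta_k\mid\mathcal F_k]=\partial\Phi(u(t_{k+1}))^\circ$ and $\mathbb E[\|\zeta_k-\partial\Phi(u(t_{k+1}))^\circ\|_{\mathcal H}^2\mid\mathcal F_k]=\Lambda(u(t_{k+1}))$. Hence the linear term reduces to $-2\varepsilon\langle\partial\Phi(u(t_{k+1}))^\circ-\bar\eta_k,d_k\rangle_{\mathcal H}$; writing $\partial\Phi(u(t_{k+1}))^\circ-\bar\eta_k=\varepsilon^{-1}\int_{t_k}^{t_{k+1}}(\dot u(s)-\dot u(t_{k+1}))\,ds$ and using continuity of $\dot u$ bounds its norm by a modulus $\omega(\varepsilon)$, equal to $L\varepsilon^\alpha$ in the H\"older case. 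Young's inequality then turns the linear term into $\varepsilon\mathbb E\|d_k\|_{\mathcal H}^2+\varepsilon\omega(\varepsilon)^2$, while local boundedness of $\Lambda\circ u$ bounds the quadratic term by $\varepsilon^2(2M+2\omega(\varepsilon)^2)$ on $[0,T]$. With $a_k:=\mathbb E\|d_k\|_{\mathcal H}^2$ this produces the recursion $a_{k+1}\le(1+\varepsilon)a_k+\varepsilon\omega(\varepsilon)^2+2\varepsilon^2(M+\omega(\varepsilon)^2)$.

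Finally I would close with discrete Gr\"onwall and a piecewise-constant interpolation estimate. Summing the recursion from $a_0=0$ and using $(1+\varepsilon)^{T/\varepsilon}\le e^{T}$ gives $\sup_{k:\,t_k\le T}a_k\le e^{T}\big(T\omega(\varepsilon)^2+2TM\varepsilon+2T\varepsilon\omega(\varepsilon)^2\big)$, which already tends to $0$ whenever $\dot u$ is merely continuous (so $\omega(\varepsilon)\to0$), establishing that $u$ is a mini-batch minimizing movement. For the rate, since $w_\varepsilon(t)=w_k$ on $[t_{k-1},t_k)$, the triangle inequality yields $\mathbb E\|w_\varepsilon(t)-u(t)\|_{\mathcal H}^2\le 2a_k+2\big(\sup_{[0,T]}\|\dot u\|_{\mathcal H}\big)^2\varepsilon^2$; taking $\omega(\varepsilon)=L\varepsilon^\alpha$ and collecting the three contributions gives the claimed bound $c_T(\varepsilon^{2\alpha}+\varepsilon^2+\varepsilon)$, where $\varepsilon^{2\alpha}$ stems from the consistency error, $\varepsilon$ from the variance and Gr\"onwall accumulation, and $\varepsilon^2$ from the interpolation offset.
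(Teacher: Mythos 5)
Your proposal is correct and follows essentially the same route as the paper: your quantity $\zeta_k-\bar\eta_k$ is (up to sign and an index shift) exactly the paper's $f_l$, your monotonicity-based energy inequality $\|d_{k+1}\|_{\mathcal H}^2\le\|d_k\|_{\mathcal H}^2-2\varepsilon\langle\zeta_k-\bar\eta_k,d_k\rangle+\varepsilon^2\|\zeta_k-\bar\eta_k\|_{\mathcal H}^2$ coincides with the one in Lemma \ref{lem1rmm}, and the unbiasedness, consistency (modulus of continuity of $\dot u$), and piecewise-constant interpolation steps all match Lemmas \ref{lemnbcar}--\ref{lem2rmm}, Theorem \ref{thrrmm} and Corollary \ref{corfin}. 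The only divergence is how the recursion is closed: you apply Young's inequality and discrete Gr\"onwall, incurring a factor $e^T$, whereas the paper keeps the term $\sqrt{\omega_l^+}\sqrt{\mathbb E\|\mathcal E_{l-1}\|_{\mathcal H}^2}$ and closes via the self-bounding quadratic inequality $x^2\le 2ax+b^2\Rightarrow x\le 2a+b$, obtaining constants polynomial in $T$ --- a minor quantitative difference that does not affect the stated rate.
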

	\begin{proof}
		It follows from Corollary \ref{corfin}.
	\end{proof}
	
   We highlight that to ensure the convergence of the random minimizing movement to the gradient flow, the above result requires only regularity assumptions on the gradient flow and not on the random scheme.

	\subsection{Asymptotic behavior}
	We have seen that under some hypotheses, the trajectory of mini-batch descent closely approximates the path of the gradient flow. It is natural to expect that one can say something about the minimization of the full potential. The following result analyzes the asymptotic behavior of the mini-batch descent flow.
	\begin{thrm}
		Let $u:[0,+\infty)\to \mathcal H$ be the solution of the gradient flow equation (\ref{gf_eq}) and $v_{\varepsilon}:[0,+\infty)\to \mathcal H$ the solution of the mini-batch descent flow equation (\ref{mb_gf}).  Suppose that $\Lambda\circ u:[0,+\infty)\to \mathcal H$ is locally integrable and that there exists $\alpha\in(0,1]$ such that, for each $j\in\{1,\dots,m\}$, $\Phi_{\mathcal B_j}$ is locally $\alpha$-H\"older continuous in its effective domain. Then, the following statements hold. 
		\begin{itemize}
			\item[(i)]  If  $\dom\partial\Phi_{\mathcal B_j}\subset \dom\Phi$ for all $j\in\{1,\dots,m\}$, then for every $\eta>0$ there exists $T>0$ such that 
			\begin{align*}
				\mathbb E\Phi(v_{\varepsilon}(T)) \le \inf_{v\in\mathcal H}\Phi(v) + \eta \quad \text{for all $\varepsilon>0$ small enough.}
			\end{align*}
			
			\item[(ii)] If $\Phi:\mathcal H\to\mathbb R\cup\{+\infty\}$ is inf-compact, then there exists $u^*\in\argmin_{v\in\mathcal H}\Phi(v)$ satisfying that for every $\eta>0$ there exists $T>0$ such that
			\begin{align*}
				\mathbb E\big\|v_\varepsilon(T) - u^*\big\|_{\mathcal H}^2\le \eta \quad \text{for all $\varepsilon>0$ small enough.}
			\end{align*}
		\end{itemize}
	\end{thrm}
	\begin{proof}
		This is a particular case of Theorem \ref{Asyncc}.
	\end{proof}

	For the minimizing movement scheme,  it is also possible to provide an analogous result on the asymptotic behavior of trajectories. 
	\begin{thrm}\label{th:asym_beh}
		Let $u:[0,+\infty)\to \mathcal H$ be the solution of the gradient flow equation (\ref{gf_eq}) $w_{\varepsilon}:[0,+\infty)\to\mathcal H$ the function generated by the proximal sequence (\ref{gf_mm0}).  Suppose that  $\Phi:\mathcal H\to\mathbb R\cup\{+\infty\}$ is inf-compact, $\Lambda\circ u:[0,+\infty)\to \mathcal H$ is locally bounded and that $\dot u$ is continuous. Then, there exists $u^*\in\argmin_{v\in\mathcal H}\Phi(v)$ satisfying that for every $\eta>0$ there exists $T>0$ such that
		\begin{align*}
			\mathbb E\big\|w_\varepsilon(T) - u^*\big\|_{\mathcal H}^2\le \eta \quad \text{for all $\varepsilon>0$ small enough.}
		\end{align*}
	\end{thrm}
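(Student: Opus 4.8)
The plan is to decouple the two sources of error: the long-time behavior of the limiting gradient flow, and the small-$\varepsilon$ approximation of that flow by the minimizing movement. Concretely, for a fixed $\eta>0$ I would first choose $T$ so that $u(T)$ is within $\eta/4$ (in squared norm) of a minimizer $u^*$, and then, with $T$ frozen, invoke Theorem~\ref{th:conv_mm} to make $\mathbb E\|w_\varepsilon(T)-u(T)\|_{\mathcal H}^2$ smaller than $\eta/4$ for all sufficiently small $\varepsilon$. A single application of $\|a-c\|_{\mathcal H}^2\le 2\|a-b\|_{\mathcal H}^2+2\|b-c\|_{\mathcal H}^2$ with $a=w_\varepsilon(T)$, $b=u(T)$, $c=u^*$ then yields $\mathbb E\|w_\varepsilon(T)-u^*\|_{\mathcal H}^2\le\eta$.

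The substantive step is the first one: proving that the gradient flow converges strongly to a minimizer, $u(t)\to u^*$ as $t\to\infty$ with $u^*\in\argmin_{v\in\mathcal H}\Phi(v)$. Since $\Phi$ is inf-compact and lower semicontinuous it attains its infimum, so $\argmin\Phi\neq\emptyset$; moreover $t\mapsto\Phi(u(t))$ is non-increasing, so the whole orbit $\{u(t):t\ge0\}$ lies in the sublevel set $\{\Phi\le\Phi(u_0)\}$, which is compact, and hence the orbit is precompact. The energy dissipation identity $\frac{d}{dt}\Phi(u(t))=-\|\partial\Phi(u(t))^\circ\|_{\mathcal H}^2$ together with boundedness of $\Phi\circ u$ from below forces $\Phi(u(t))\downarrow\min\Phi$, so by lower semicontinuity any subsequential limit of $u(t)$ along $t\to\infty$ is a minimizer.

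To promote this to convergence of the entire trajectory I would use Fej\'er monotonicity: for any $z$ with $0\in\partial\Phi(z)$, convexity gives $\frac{d}{dt}\tfrac12\|u(t)-z\|_{\mathcal H}^2=\langle\dot u(t),u(t)-z\rangle\le\Phi(z)-\Phi(u(t))\le0$, so $t\mapsto\|u(t)-z\|_{\mathcal H}$ is non-increasing. Choosing by precompactness a sequence $u(t_k)\to u^*\in\argmin\Phi$, the monotonicity of $t\mapsto\|u(t)-u^*\|_{\mathcal H}$ and the fact that a subsequence of it tends to $0$ force $\|u(t)-u^*\|_{\mathcal H}\to0$ along the whole flow, which both identifies the single limit $u^*$ and discharges the first step.

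With $u^*$ in hand the second step is immediate: the hypotheses that $\dot u$ is continuous (so $u\in C^1([0,+\infty);\mathcal H)$) and that $\Lambda\circ u$ is locally bounded are exactly those of Theorem~\ref{th:conv_mm}, which therefore guarantees that $u$ is a mini-batch minimizing movement, i.e.\ $\mathbb E\|w_\varepsilon(t)-u(t)\|_{\mathcal H}^2\to0$ as $\varepsilon\to0^+$ for each fixed $t$; note that only this qualitative conclusion is needed, not the H\"older rate. Combining the two steps as described above completes the argument. I expect the main obstacle to be the first step, specifically upgrading subsequential convergence to convergence of the full orbit, which is precisely where inf-compactness (for precompactness) and Fej\'er monotonicity (for uniqueness of the limit) are used in tandem; everything afterwards is routine $\varepsilon$--$\eta$ bookkeeping.
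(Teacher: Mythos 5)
Your proposal is correct, and its overall architecture coincides with the paper's: the paper proves this theorem by declaring it ``analogous to Theorem~\ref{Asyncc}'', whose part (ii) likewise splits the error as $\mathbb E\|w_\varepsilon(T)-u^*\|_{\mathcal H}^2\le 2\,\mathbb E\|w_\varepsilon(T)-u(T)\|_{\mathcal H}^2+2\|u(T)-u^*\|_{\mathcal H}^2$, first fixing $T$ from the long-time behavior of the gradient flow and then shrinking $\varepsilon$ via the approximation result (here Theorem~\ref{th:conv_mm}/Corollary~\ref{corfin}, whose hypotheses --- $u\in C^1$ and $\Lambda\circ u$ locally bounded --- are exactly those you invoke). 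Where you genuinely diverge is in the substantive step, strong convergence $u(t)\to u^*$: the paper cites Bruck's theorem \cite[Corollary 17.2.1]{SoBV} to get \emph{weak} convergence of the trajectory to a minimizer and then upgrades weak to strong using inf-compactness (weak and strong convergence coincide on bounded subsets of sublevel sets), whereas you prove strong convergence from scratch --- precompactness of the orbit inside the compact sublevel set $\{\Phi\le\Phi(u_0)\}$, energy decay to identify subsequential limits as minimizers, and Fej\'er monotonicity $\frac{d}{dt}\frac12\|u(t)-z\|_{\mathcal H}^2\le\Phi(z)-\Phi(u(t))\le 0$ to force a single limit. Your route is more self-contained and elementary; the paper's is shorter but leans on a nontrivial external theorem. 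One small repair in your sketch: the dissipation identity plus boundedness below does not by itself yield $\Phi(u(t))\downarrow\min\Phi$ (it only gives convergence to some limit $\ell\ge\min\Phi$ and square-integrability of the slope); but the claim follows immediately from integrating the Fej\'er inequality you already wrote --- if $\Phi(u(t))\ge\min\Phi+\delta$ for all $t$, then $\|u(t)-z\|_{\mathcal H}^2$ would decrease without bound for $z\in\argmin\Phi$, a contradiction --- so the gap is cosmetic, not structural.
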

	\begin{proof}
		The proof is analogous to that of Theorem \ref{Asyncc}.
	\end{proof}

 \section{Well-posedness  and convergence of mini-batch descent}\label{sec:well_posedness_convergence}

In the remainder of this paper, we assume that $u_0 \in \dom(\partial\Phi)$, condition \eqref{assu:contention_domains} holds, and the sum rule \eqref{assu:sum_rule} is satisfied.

	\subsection{Existence and uniqueness}
	The existence of the mini-batch descent flow (\ref{mb_gf}) is established through an induction argument that ensures existence within each interval. The following argument encapsulates this process.
	\begin{lmm}\label{lemwpsgf}
		Let $k\in\mathbb N$. There exists a unique  continuous function $v_{k}:[0,t_{k}]\to\mathcal H$ such that
		\begin{itemize}
			\item[(i)] $v_{k}(0) = u_{0}$;
			\item[(ii)] $v_{k}$ is locally absolutely continuous in $(t_{l-1},t_l)$ for all $l\in \{1,\dots, k\}$;
			\item[(iii)] $0\in \dot{v}_{k} + \partial\Phi_{\mathcal B_{j_l}}(v_{k})$ a.e. in $[t_{l-1},t_{l}]$ for all $l\in\{1,\dots,k\}$.
		\end{itemize}
	\end{lmm}
	\begin{proof}
		Let $S$ denote the set of all natural numbers $k\in\mathbb N$ such that there exists a continuous function $v_{k}:[0,t_{k}]\to\mathcal H$ satisfying items $(i)$-$(iii)$. We argue by mathematical induction. Since $u_0\in\dom \Phi\subset\dom \Phi_{\mathcal B_{j_1}}$, by \cite[Theorem 17.2.3]{SoBV} there exists a unique  continuous function $w_{1}:[0,t_1]\to \mathcal H$ such that $w_1(0)=u_0$, $w_{1}$ is locally absolutely continuous in $(0,t_1)$, and $0\in w_1(t)+\partial \Phi_{\mathcal B_{j_1}}(w_1(t))$ for a.e. $t\in[0,t_1]$; thus $1\in S$.

		We now proceed with the inductive step. Let $k\in S$, then there exists a continuous function  $v_{k}:[0,t_{k}]\to\mathcal H$ satisfying  items $(i)$-$(iii)$.
		Since $v_{k}(t_{k})\in \dom\Phi_{\mathcal B_{j_k}}\subset \overline{\dom\Phi}\subset\overline{\dom\Phi_{\mathcal B_{j_{k+1}}}}$, by \cite[Theorem 17.2.3]{SoBV}, there exists a unique  continuous function $w_{k+1}:[t_{k},t_{k+1}]\to\mathcal H$ such that $w_{k+1}(t_{k})=v_{k}(t_{k})$, $w_{k+1}$ is locally absolutely continuous in $(t_{k},t_{k+1})$ and $0\in\dot w_{k+1}(t)+\partial\Phi_{\mathcal B_{j_{k+1}}}(w_{k+1}(t))$ for a.e. $t\in[t_{k},t_{k+1}]$. Define $v_{k+1}:[0,t_{k+1}]\to \mathcal H$ by
		\begin{align*}
			v_{k+1}(t):=
			\left\{ \begin{array}{lcc} 
				v_{k}(t) &\text{if}& t\in[0,t_{k}],\\ w_{k+1}(t) &\text{if}& t\in(t_{k},t_{k+1}].\end{array} \right.
		\end{align*}
		By construction, $v_{k+1}$ is a continuous function satisfying items $(i)$-$(iii)$; thus $k+1\in S$.  This completes the induction, and hence $S=\mathbb N$.
	\end{proof}
	
	\begin{thrm}\label{Existence}
		There exists a unique continuous function $v_{\varepsilon}:[0, +\infty)\to\mathcal H$ such that  
		\begin{itemize}
			\item[(i)] $v_{\varepsilon}(0) = u_{0}$;
			\item[(ii)] $v_{\varepsilon}$ is locally absolutely continuous in $(t_{k-1},t_k)$ for all $k\in\mathbb N$;
			\item[(iii)] $0\in \dot{v}_{\varepsilon} + \partial\Phi_{\mathcal B_{j_k}}(v_{\varepsilon})$ a.e. in $[t_{k-1},t_{k}]$ for all $k\in\mathbb N$.
		\end{itemize}
	\end{thrm}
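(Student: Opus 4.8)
The plan is to obtain the global solution by gluing together the finite-horizon solutions furnished by Lemma \ref{lemwpsgf} and then to read off uniqueness from the uniqueness already available on each bounded interval. The only genuine content beyond Lemma \ref{lemwpsgf} is a compatibility check between consecutive horizons, so I expect no serious obstacle here; the substantive work has already been done interval by interval.

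First I would establish that the family $\{v_k\}_{k\in\mathbb N}$ is consistent, i.e.\ that $v_{k+1}$ restricted to $[0,t_k]$ coincides with $v_k$. To see this, observe that $v_{k+1}|_{[0,t_k]}$ satisfies all three defining properties $(i)$--$(iii)$ of Lemma \ref{lemwpsgf} at level $k$: it starts at $u_0$, it is locally absolutely continuous on each $(t_{l-1},t_l)$ with $l\le k$, and it satisfies $0\in\dot v_{k+1}+\partial\Phi_{\mathcal B_{j_l}}(v_{k+1})$ a.e.\ on $[t_{l-1},t_l]$ for $l\le k$. These are precisely a subfamily of the properties that $v_{k+1}$ already enjoys on $[0,t_{k+1}]$. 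By the uniqueness part of Lemma \ref{lemwpsgf}, it follows that $v_{k+1}|_{[0,t_k]}=v_k$.

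With consistency in hand, I would define $v_{\varepsilon}:[0,+\infty)\to\mathcal H$ by $v_{\varepsilon}(t):=v_k(t)$ for any $k$ with $t\le t_k$; since $t_k=k\varepsilon\to+\infty$, every $t$ is covered, and the value is unambiguous by consistency. Continuity is a local matter: each $t$ lies in some $[0,t_k]$ on which $v_{\varepsilon}=v_k$ is continuous, so $v_{\varepsilon}$ is continuous on $[0,+\infty)$. Properties $(i)$--$(iii)$ are then inherited directly: $(i)$ holds since $v_{\varepsilon}(0)=v_1(0)=u_0$, and on each $(t_{k-1},t_k)$ (resp.\ $[t_{k-1},t_k]$) one has $v_{\varepsilon}=v_k$, so local absolute continuity and the subgradient inclusion follow from the corresponding properties of $v_k$.

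For uniqueness, suppose $\tilde v_{\varepsilon}$ is another continuous function satisfying $(i)$--$(iii)$. For each fixed $k$, the restriction $\tilde v_{\varepsilon}|_{[0,t_k]}$ satisfies $(i)$--$(iii)$ of Lemma \ref{lemwpsgf} at level $k$, so by the uniqueness there $\tilde v_{\varepsilon}|_{[0,t_k]}=v_k=v_{\varepsilon}|_{[0,t_k]}$. Since this holds for every $k$ and the intervals $[0,t_k]$ exhaust $[0,+\infty)$, we conclude $\tilde v_{\varepsilon}=v_{\varepsilon}$. The step requiring the most care is the consistency verification, since it is what legitimizes patching the horizon-by-horizon solutions into a single well-defined function; everything else is a direct transcription of the properties established on each interval.
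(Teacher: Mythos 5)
Your proposal is correct and follows essentially the same route as the paper: both glue the finite-horizon solutions from Lemma \ref{lemwpsgf} into a global function and derive uniqueness of $v_{\varepsilon}$ from the uniqueness of the $v_k$. The only difference is that you spell out explicitly the consistency check $v_{k+1}|_{[0,t_k]}=v_k$, which the paper leaves implicit in its definition $v_{\varepsilon}(t):=v_{k_t}(t)$ --- a worthwhile clarification, but not a different argument.
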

	\begin{proof}
		By Lemma \ref{lemwpsgf}, for each $k\in\mathbb N$ there exists a unique continuous function $v_{k}:[0,t_k]\to\mathcal H$ such that    $v_{k}(0)=u_0$, $v_{k}$ is locally absolutely continuous in $(t_{k-1},t_{k})$ and $0\in\dot v_{k}(t)+\partial\Phi_{j_k}(v_{k}(t))$ for a.e. $t\in[t_{k-1},t_{k}]$. Define $v_{\varepsilon}:[0,+\infty)\to\mathcal H$ by $v_\varepsilon(t):=v_{k_t}(t)$; clearly $v_\varepsilon$ is continuous, and satisfies items $(i)$-$(iii)$. Its uniqueness follows from the uniqueness of the functions $\{v_{k}\}_{k\in\mathbb N}$.
	\end{proof}
	
	\begin{prpstn}\label{Regularity}
		Let $v_{\varepsilon}:[0,+\infty)\to\mathcal H$ be the solution of the mini-batch descent flow equation (\ref{mb_gf}).
		Then, 
		\begin{itemize}
			\item[(i)]  if $\dom\partial\Phi_{\mathcal B_j}\subset \dom\Phi$ for all $j\in\{1,\dots,m\}$, then $v_{\varepsilon}:[0,+\infty)\to\mathcal H$ is locally absolutely continuous;
			
			\item[(ii)]  if $\dom \partial\Phi_{\mathcal B_j}\subset \dom\partial\Phi$ for all $j\in\{1,\dots,m\}$, then $v_{\varepsilon}:[0,+\infty)\to\mathcal H$ is locally Lipschitz continuous.
		\end{itemize}
	\end{prpstn}	
	\begin{proof}
		Given $k\in\mathbb N$, by \cite[Theorem 4.11]{Barbu2010}, if $v_{\varepsilon}(t_{k-1})\in\dom\Phi\subset\dom  \Phi_{\mathcal B_{j_k}}$, then the restriction of $v_\varepsilon$ to the subinterval $[t_{k-1},t_k]$ is absolutely continuous; we conclude that $v_{\varepsilon}$ is absolutely continuous in any compact subset of $[0,+\infty)$. This proves item $(i)$; the proof of item $(ii)$  follows the same argument replacing \cite[Theorem 4.11]{Barbu2010} by \cite[Theorem 17.2.2]{SoBV}.
	\end{proof}

	\subsection{Convergence}
	\begin{prpstn}\label{essprop}
		Let $u:[0,+\infty)\to\mathcal H$ be the solution of the gradient flow equation (\ref{gf_eq}) and $v_{\varepsilon}:[0,+\infty)\to\mathcal H$ the solution of the mini-batch descent flow equation (\ref{mb_gf}). Then, 
		\begin{align*}
			\big\| v_{\varepsilon}(t)-u(t)\big\|_{\mathcal H} \le \max_{j\in\{1,\dots,m\}}\pi_{j}^{-\frac{1}{2}} t^{\frac{1}{2}} \Big(\int_{0}^t \Lambda \big(u(s)\big)\, ds\Big)^{\frac{1}{2}}\quad \forall t\in[0,+\infty).
		\end{align*}
	\end{prpstn}
	\begin{proof}
		Let $k\in\mathbb N$. By monotonicity of the subdifferential, 
		\begin{align*}
			\left\langle -\dot v_{\varepsilon}(\tau) - \xi_{j_{k}}(u(\tau)),\, v_\varepsilon(\tau) - u(\tau) \right\rangle \ge 0\quad \text{for a.e.  $\tau\in(t_{k-1},t_k)$},
		\end{align*}
		where the functions $\xi_{j_{k}}$ are given as in \eqref{eq:represent_subdif}. Using that $\dot u = -\partial\Phi(u)^{\circ}$ a.e. in $[0,+\infty)$, this can be rewritten as 
		\begin{align*}
			0\le 	\left\langle -\dot v_{\varepsilon}(\tau) + \dot u(\tau),\, v_{\varepsilon}(\tau) - u(\tau) \right\rangle + \left\langle \partial \Phi(u(\tau))^\circ - \xi_{j_{k}}(u(\tau)),\, v_{\varepsilon}(\tau) - u(\tau) \right\rangle,
		\end{align*}
		for a.e. $\tau\in(t_{k-1}, t_k)$. This implies
		\begin{align*}
			\frac{1}{2}\frac{d}{d\tau}\big\|v_\varepsilon(\tau)-u(\tau)\big\|^2_{\mathcal H}&\le  \big\|  \partial \Phi(u(\tau))^\circ - \xi_{j_{k}}(u(\tau))\big\|_{\mathcal H} \big\|  v_\varepsilon(\tau) - u(\tau)\big\|_{\mathcal H}\\
			&\le \max_{j\in\{1,\dots,m\}}\pi_{j}^{-\frac{1}{2}}\sqrt{\Lambda\big(u(\tau)\big)} \big\|  v_\varepsilon(\tau) - u(\tau)\big\|_{\mathcal H},
		\end{align*}
		for a.e. $\tau\in(t_{k-1},t_{k})$. Due to Theorem \ref{Existence}, the function  $t\longmapsto \big\|v_{\varepsilon}(t)-u(t)\big\|_{\mathcal H}$ is locally absolutely continuous in $(t_{k-1},t_k)$. Then, we can employ Gr\"onwall's inequality  \cite[Theorem 5.2.2]{DistributionTheory} to conclude that
		\begin{align*}
			\big\|v_\varepsilon(t)-u(t)\big\|_{\mathcal H}\le \big\|v_\varepsilon(s)-u(s)\big\|_{\mathcal H} + \max_{j\in\{1,\dots,m\}}\pi_{j}^{-\frac{1}{2}}\int_{s}^{t} \sqrt{\Lambda\big(u(\tau)\big)} \, d\tau \quad\forall s,t\in(t_{k-1}, t_{k}).
		\end{align*}
		Set $\gamma:=\max_{j\in\{1,\dots,m\}}\pi_{j}^{-\frac{1}{2}}$. By continuity of $v_\varepsilon$ and $u$, we conclude  that
		\begin{align}\label{genin0}
			\big\|v_\varepsilon(t)-u(t)\big\|_{\mathcal H}\le \big\|v_\varepsilon(t_{k-1})-u(t_{k-1})\big\|_{\mathcal H} + \gamma\int_{t_{k-1}}^{t_{}} \sqrt{\Lambda\big(u(\tau)\big)} \, d\tau\quad \forall t\in[t_{k-1},t_k].
		\end{align}
		Since (\ref{genin0}) holds for arbitrary $k\in\mathbb N$, this implies
		\begin{align*}
			\big\|v_\varepsilon(t)-u(t)\big\|_{\mathcal H}\le\gamma \Big[ \int_{t_{k_t-1}}^{t} \sqrt{\Lambda\big(u(\tau)\big)} \, d\tau + \sum_{l=1}^{k_t-1}\int_{t_{l-1}}^{t_l}\sqrt{\Lambda\big(u(\tau)\big)}\, ds\Big]=\gamma\int_{0}^{t} \sqrt{\Lambda\big(u(\tau)\big)} \, d\tau, 
		\end{align*}
		for all $t\in[0,+\infty)$. The result follows then from H\"older's inequality.
	\end{proof}
	
	\begin{crllr}\label{corobound}
		Let $u:[0,+\infty)\to\mathcal H$ be the solution of the gradient flow equation (\ref{gf_eq}) and $v_{\varepsilon}:[0,+\infty)\to\mathcal H$ the solution of the mini-batch descent flow  equation (\ref{mb_gf}). Then, 
		\begin{align*}
			\big\| v_{\varepsilon}(t)\big\|_{\mathcal H}\le  \max_{j\in\{1,\dots,m\}}\pi_{j}^{-\frac{1}{2}} t^{\frac{1}{2}} \Big(\int_{0}^t \Lambda \big(u(s)\big)\, ds\Big)^{\frac{1}{2}} +	t\big\| \partial \Phi(u_0)^\circ\big\|_{\mathcal H} + \big\|u_0\big\|_{\mathcal H} \quad\forall t\in[0,+\infty).
		\end{align*}
	\end{crllr}
	\begin{proof}
		Since $u$ is Lipschitz continuous with constant $\big\|\partial \Phi(u_0)^\circ\big\|_{\mathcal H}$ in $[0,+\infty)$, we have that
		\begin{align*}
			\big\| v_{\varepsilon}(t)\big\|_{\mathcal H}\le 	\big\| v_{\varepsilon}(t)-u(t)\big\|_{\mathcal H} + 	\big\| u(t)-u_0\big\|_{\mathcal H} + \big\|u_0\big\|_{\mathcal H}\le 	\big\| v_{\varepsilon}(t)-u(t)\big\|_{\mathcal H} + 	t\big\| \partial \Phi(u_0)^\circ\big\|_{\mathcal H} + \big\|u_0\big\|_{\mathcal H},
		\end{align*}
		for all $t\in[0,+\infty)$. The result follows then from Proposition \ref{essprop}.
	\end{proof}
	
	\begin{lmm}\label{lemmaintervals0}
		Let $u:[0,+\infty)\to\mathcal H$ be the solution of the gradient flow equation (\ref{gf_eq}) and $v_\varepsilon:[0,+\infty)\to\mathcal H$ the solution of the mini-batch descent flow equation (\ref{mb_gf}). Suppose that $\Lambda\circ u:[0,+\infty)\to \mathcal H$ is locally integrable. Then, the following statements hold. 
	\begin{itemize}
			\item[(i)] If, for each $j\in\{1,\dots,m\}$,  $\Phi_{\mathcal B_{j}}$ is locally bounded in its effective domain, then for any $T>0$,  $\{\varepsilon^{\frac{1}{2}}\dot v_{\varepsilon}\}_{\varepsilon>0}$ is bounded in $L^2\big([0,T];\mathcal H\big)$.
			
			\item[(ii)] If there exists $\alpha\in(0,1]$ such that, for each $j\in\{1,\dots,m\}$, $\Phi_{\mathcal B_j}$ is locally $\alpha$-H\"older continuous in its effective domain, then for any $T>0$, $\{\varepsilon^{\frac{1-\alpha}{2-\alpha}}\dot v_{\varepsilon}\}_{\varepsilon>0}$ is bounded in $L^2\big([0,T];\mathcal H\big)$.
		\end{itemize}
		
	\end{lmm}
	\begin{proof}
		Let $T>0$ be given. Both items $(i)$ and $(ii)$ can be treated at the same time, allowing $\alpha$ to be zero; in this way a function is locally bounded if and only if it is locally $0$-H\"older continuous. Define 
		\begin{align*}
			r_{T}:= \max_{j\in\{1,\dots,m\}}\pi_{j}^{-\frac{1}{2}} T^{\frac{1}{2}} \Big(\int_{0}^T \Lambda \big(u(s)\big)\, ds\Big)^{\frac{1}{2}} +T\big\| \partial \Phi(u_0)^\circ\big\|_{\mathcal H} + \big\|u_0\big\|_{\mathcal H}.
		\end{align*}
		Since $\Lambda\circ u\in L^1\big([0,T];\mathbb R\big)$, $r_T<+\infty$. By assumption, there exists $L_{T}>0$ such that, for any $j\in\{1,\dots,m\}$, 
		\begin{align*}
			\frac{|\Phi_{\mathcal B_{j_{}}}(v)-\Phi_{\mathcal B_{j_{}}}(w)|}{\big\| v - w\big\|_{\mathcal H}^\alpha}\le L_T\quad \text{for all $v,w\in\mathbb B_{\mathcal H}\big(0,r_{T}\big)$ satisfying $v\neq w$.}
		\end{align*}
		Let $k\in\mathbb N$.
		By \cite[Theorem 17.2.3]{SoBV}, $\Phi_{\mathcal B_{j_{k}}}\circ v_{\varepsilon}$ is locally  absolutely continuous in $(t_{k-1},t_k)$, and
		\begin{align}\label{geneq12}
			\frac{d}{d\tau}\Phi_{\mathcal B_{j_{k}}}\big(v_{\varepsilon}(\tau)\big) = - \big\| \dot v_{\varepsilon}(\tau) \big\|_{\mathcal H}^2\quad \text{for a.e. $\tau\in(t_{k-1},t_{k})$.}
		\end{align}
		From this and Corollary \ref{corobound}, for all $s,t\in (t_{k-1},t_k)$,
		\begin{align*}
			\int_{s}^t\big\| \dot v_{\varepsilon}(\tau) \big\|_{\mathcal H}^2\,d\tau &= \Phi_{\mathcal B_{j_k}}(v_\varepsilon(s)) - 	\Phi_{\mathcal B_{j_k}}(v_\varepsilon(t))\le L_{T} \big\| v_{\varepsilon}(s) - v_{\varepsilon}(t)\big\|_{\mathcal H}^\alpha\\
			&\le L_T\Big(\int_{s}^t\big\| \dot v_{\varepsilon}(\tau)\big\|_{\mathcal H}\,d\tau\Big)^\alpha\le  L_T (t-s)^{\frac{\alpha}{2}}\Big(\int_{s}^t\big\| \dot v_{\varepsilon}(\tau)\big\|_{\mathcal H}^2\,d\tau\Big)^\frac{\alpha}{2}.
		\end{align*}
		From where we conclude that, for all $s,t\in (t_{k-1},t_k)$,
		\begin{align}\label{bound0}
			\int_{s}^t\big\| \dot v_{\varepsilon}(\tau) \big\|_{\mathcal H}^2\,d\tau\le L_{T}^{\frac{2}{2-\alpha}} (t-s)^{\frac{\alpha}{2-\alpha}}=L_{T}^{\frac{2}{2-\alpha}}{(t-s)^{\frac{2(\alpha-1)}{2-\alpha}}}(t-s).
		\end{align}
		Since $k\in\mathbb{N}$ was arbitrary, we conclude from (\ref{bound0}) that
		\begin{align*}
			\int_{0}^T\big\| \dot v_{\varepsilon}(\tau) \big\|_{\mathcal H}^2\,d\tau&\le \int_{t_{k_{T}-1}}^T\big\| \dot v_{\varepsilon}(\tau) \big\|_{\mathcal H}^2\,d\tau + \sum_{l=1}^{k_T-1}\int_{t_{l-1}}^{t_l}\big\| \dot v_{\varepsilon}(\tau) \big\|_{\mathcal H}^2\,d\tau\le L_{T}^{\frac{2}{2-\alpha}} \varepsilon^{\frac{2(\alpha-1)}{2-\alpha}} \Big[T - t_{k_T-1} + \sum_{l=1}^{k_T-1}(t_{l}-t_{l-1})\Big].
		\end{align*}
		We can then conclude that $\displaystyle\int_{0}^T\big\|\varepsilon^{\frac{1-\alpha}{2-\alpha}} \dot v_{\varepsilon}(\tau) \big\|_{\mathcal H}^2\,d\tau = \varepsilon^{\frac{2(1-\alpha)}{2-\alpha}}	\int_{0}^T\big\| \dot v_{\varepsilon}(\tau) \big\|_{\mathcal H}^2\,d\tau \le  L_{T}^{\frac{2}{2-\alpha}} T.$
	\end{proof}
	
	\begin{thrm}\label{thrcc}
		Let $u:[0,+\infty)\to\mathcal H$ be the solution of the gradient flow equation (\ref{gf_eq}) and $v_{\varepsilon}:[0,+\infty)\to\mathcal H$ the solution of the mini-batch descent flow equation (\ref{mb_gf}). Then,
		\begin{align*}
			\mathbb E\big\|v_\varepsilon(t)-u(t)\big\|^2_{\mathcal H}\le  2\varepsilon \Big(\int_{0}^t \mathbb E\big\|\dot v_{\varepsilon}(\tau)\big\|_{\mathcal H}^2\,d\tau\Big)^{\frac{1}{2}} \Big(\int_{0}^t \Lambda\big(u(s)\big)\, ds\Big)^{\frac{1}{2}}\quad \forall t\in[0,+\infty).
		\end{align*}
	\end{thrm}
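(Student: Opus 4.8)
The plan is to refine the pathwise estimate behind Proposition \ref{essprop} by exploiting the unbiasedness of the random subgradients through a conditioning argument. First I would reproduce the monotonicity step from Proposition \ref{essprop}: for each $k\in\mathbb N$, since $-\dot v_\varepsilon\in\partial\Phi_{\mathcal B_{j_k}}(v_\varepsilon)$ and $\xi_{j_k}(u)\in\partial\Phi_{\mathcal B_{j_k}}(u)$ on $(t_{k-1},t_k)$, monotonicity of the subdifferential together with $\dot u=-\partial\Phi(u)^\circ$ yields
\begin{align*}
\frac{1}{2}\frac{d}{d\tau}\big\|v_\varepsilon(\tau)-u(\tau)\big\|_{\mathcal H}^2 \le \big\langle \partial\Phi(u(\tau))^\circ - \xi_{j_k}(u(\tau)),\, v_\varepsilon(\tau)-u(\tau)\big\rangle\quad\text{a.e. on }(t_{k-1},t_k).
\end{align*}
Integrating over each subinterval, using continuity of $v_\varepsilon$ and $u$ at the nodes $t_k$ to telescope, and recalling $v_\varepsilon(0)=u(0)=u_0$, I obtain the pathwise bound
\begin{align*}
\frac{1}{2}\big\|v_\varepsilon(t)-u(t)\big\|_{\mathcal H}^2 \le \int_0^t \big\langle \partial\Phi(u(\tau))^\circ - \xi_{j_{k_\tau}}(u(\tau)),\, v_\varepsilon(\tau)-u(\tau)\big\rangle\, d\tau,
\end{align*}
valid for every realization of $\{j_k\}$.

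The crucial step, and the one I expect to be the main obstacle, is to take expectations and split $v_\varepsilon(\tau)-u(\tau)=\big(v_\varepsilon(\tau)-v_\varepsilon(t_{k_\tau-1})\big)+\big(v_\varepsilon(t_{k_\tau-1})-u(\tau)\big)$. Introducing the filtration $\mathcal F_{k-1}:=\sigma(j_1,\dots,j_{k-1})$, I note that $v_\varepsilon(t_{k_\tau-1})$ is $\mathcal F_{k_\tau-1}$-measurable (it is a deterministic function of the batches chosen in the first $k_\tau-1$ steps) while $u(\tau)$ is deterministic and $j_{k_\tau}$ is independent of $\mathcal F_{k_\tau-1}$; since $\mathbb E\,\xi_{j_k}(w)=\partial\Phi(w)^\circ$ for $w\in\dom\partial\Phi$, conditioning on $\mathcal F_{k_\tau-1}$ and the tower property give
\begin{align*}
\mathbb E\big\langle \partial\Phi(u(\tau))^\circ - \xi_{j_{k_\tau}}(u(\tau)),\, v_\varepsilon(t_{k_\tau-1})-u(\tau)\big\rangle = 0.
\end{align*}
This is exactly where the deterministic Cauchy--Schwarz bound of Proposition \ref{essprop} gets replaced by a variance-type quantity. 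The interchange of $\mathbb E$ with the time integral needed here is by Fubini, licit whenever $\int_0^t\Lambda(u(s))\,ds$ and $\int_0^t\mathbb E\|\dot v_\varepsilon\|_{\mathcal H}^2\,d\tau$ are finite (via the deterministic bound of Proposition \ref{essprop}), which is the only case to treat, the asserted estimate being trivial otherwise. Thus only the fluctuation term survives:
\begin{align*}
\frac{1}{2}\mathbb E\big\|v_\varepsilon(t)-u(t)\big\|_{\mathcal H}^2 \le \int_0^t \mathbb E\big\langle \partial\Phi(u(\tau))^\circ - \xi_{j_{k_\tau}}(u(\tau)),\, v_\varepsilon(\tau)-v_\varepsilon(t_{k_\tau-1})\big\rangle\, d\tau.
\end{align*}

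It remains to estimate the right-hand side. For $\tau\in(t_{k-1},t_k)$, writing $v_\varepsilon(\tau)-v_\varepsilon(t_{k-1})=\int_{t_{k-1}}^\tau \dot v_\varepsilon$, Cauchy--Schwarz in time gives $\|v_\varepsilon(\tau)-v_\varepsilon(t_{k-1})\|_{\mathcal H}^2\le \varepsilon\,\tilde I_{k_\tau}$, where $\tilde I_k$ denotes the integral of $\|\dot v_\varepsilon\|_{\mathcal H}^2$ over $(t_{k-1},t_k)\cap[0,t]$; here the factor $\varepsilon$ (the interval length) is what produces the improved rate. Applying Cauchy--Schwarz in the probability space to each $\tau$-integrand, and using that $\mathbb E\|\partial\Phi(u(\tau))^\circ-\xi_{j_{k_\tau}}(u(\tau))\|_{\mathcal H}^2=\Lambda(u(\tau))$ directly from the definition \eqref{Lambda}, I bound that integrand by $\varepsilon^{1/2}\sqrt{\Lambda(u(\tau))}\,(\mathbb E\,\tilde I_{k_\tau})^{1/2}$. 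A further Cauchy--Schwarz in $\tau$ separates $\Lambda$ from the derivative term, and since each interval has length at most $\varepsilon$, summing yields $\int_0^t \mathbb E\,\tilde I_{k_\tau}\,d\tau\le \varepsilon\int_0^t \mathbb E\|\dot v_\varepsilon(\tau)\|_{\mathcal H}^2\,d\tau$ by Tonelli. Combining these estimates gives
\begin{align*}
\frac{1}{2}\mathbb E\big\|v_\varepsilon(t)-u(t)\big\|_{\mathcal H}^2 \le \varepsilon\Big(\int_0^t \mathbb E\|\dot v_\varepsilon(\tau)\|_{\mathcal H}^2\,d\tau\Big)^{1/2}\Big(\int_0^t \Lambda(u(s))\,ds\Big)^{1/2},
\end{align*}
which is the claimed bound after multiplying by $2$.
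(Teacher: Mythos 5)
Your proposal is correct and follows essentially the same route as the paper's proof of Theorem \ref{thrcc}: the monotonicity inequality, the splitting of $v_\varepsilon(\tau)-u(\tau)$ at the node $v_\varepsilon(t_{k-1})$ so that unbiasedness of $\xi_{j_k}$ annihilates the drift term, and Cauchy--Schwarz combined with the increment bound $\big\|v_\varepsilon(\tau)-v_\varepsilon(t_{k-1})\big\|_{\mathcal H}^2\le\varepsilon\int_{t_{k-1}}^{\tau}\big\|\dot v_\varepsilon\big\|_{\mathcal H}^2$ to extract the factor $\varepsilon$. Your only departures are organizational --- you integrate pathwise and apply Cauchy--Schwarz once over $[0,t]$ rather than the paper's per-interval H\"older estimate followed by summation --- and your explicit conditioning on $\mathcal F_{k-1}=\sigma(j_1,\dots,j_{k-1})$ with the tower property makes rigorous the independence the paper invokes tacitly when it factorizes $\mathbb E\big\langle \chi(\tau),\, v_\varepsilon(t_{k-1})-u(\tau)\big\rangle$ as an inner product of expectations.
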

	\begin{proof}
		Let $k\in\mathbb N$. By monotonicity of the subdifferential, 
		\begin{align*}
			\left\langle -\dot v_{\varepsilon}(\tau) - \xi_{j_{k}}(u(\tau)),\, v_\varepsilon(\tau) - u(\tau) \right\rangle \ge 0\quad \text{for a.e.  $\tau\in(t_{k-1},t_k)$}.
		\end{align*}
		Using that $\dot u = -\partial\Phi(u)^{\circ}$ a.e. in $[0,+\infty)$, this can be rewritten as 
		\begin{align*}
			0\le 	\left\langle -\dot v_{\varepsilon}(\tau) + \dot u(\tau),\, v_{\varepsilon}(\tau) - u(\tau) \right\rangle + \left\langle \partial \Phi(u(\tau))^\circ - \xi_{j_{k}}(u(\tau)),\, v_{\varepsilon}(\tau) - u(\tau) \right\rangle,
		\end{align*}
		for a.e. $\tau\in(t_{k-1}, t_k)$.
		This implies that for a.e. $\tau\in(t_{k-1},t_{k})$,
		\begin{align}\label{gen1}
			\frac{1}{2}\frac{d}{d\tau}\big\|v_\varepsilon(\tau)-u(\tau)\big\|^2_{\mathcal H}\le  \left\langle   \partial \Phi(u(\tau))^\circ - \xi_{j_{k}}(u(\tau)),\,  v_\varepsilon(\tau) - u(\tau)\right\rangle.
		\end{align}
		Let $\chi:[0,+\infty)\to \mathcal H$ be given by  $\chi(t)=\partial \Phi(u(t))^\circ - \xi_{j_{k}}\left(u(t)\right)$. Since, $\mathbb E\chi(\tau) = 0$ for all $\tau\in(t_{k-1},t_k)$,
		\begin{align*}
			\mathbb E\left[\left\langle \chi(\tau),\, v_\varepsilon(t_{k_{}-1}) - u(\tau) \right\rangle\right]=\left\langle \mathbb E\chi(\tau), \mathbb E v_{\varepsilon}(t_{k_{}-1}) - u(\tau) \right\rangle =  0\quad \forall\tau\in(t_{k-1},t_k).
		\end{align*}
		Combining this with (\ref{gen1}), we get, for a.e. $\tau\in(t_{k-1},t_k)$,
		\begin{align}\label{derc1}
			\frac{1}{2}\frac{d}{d\tau}\mathbb E\big\|v_\varepsilon(\tau)-u(\tau)\big\|^2_{\mathcal H}\le \mathbb E\left[\left\langle \chi(\tau),\, v_{\varepsilon}(\tau) - v_{\varepsilon}(t_{k-1}) \right\rangle \right]\le  \sqrt{\mathbb E\big\|\chi(\tau)\big\|^2_{\mathcal H}}  \sqrt{\mathbb E\big\|v_\varepsilon(\tau)-v_\varepsilon(t_{k-1})\big\|^2_{\mathcal H}}.
		\end{align}
		Observe that $\mathbb E\big\|\chi(t)\big\|_{\mathcal H}^2 = \Lambda\big(u(t)\big)$ for all $t\in[0,+\infty)$. Now, since $v_{\varepsilon}$ is locally absolutely continuous in $(t_{k-1},t_k)$, so is $t\longmapsto$ $\big\|v_{\varepsilon}(t)-u(t)\big\|_{\mathcal H}$; thus, integrating yields
		\begin{align*}
			\mathbb E\big\|v_\varepsilon(t)-u(t)\big\|^2_{\mathcal H} \le \mathbb E\big\|v_\varepsilon(s)-u(s)\big\|^2_{\mathcal H} + 2\int_{s}^t \sqrt{\Lambda\big(u(\tau)\big)}\sqrt{\mathbb E\big\|v_\varepsilon(\tau)-v_\varepsilon(t_{k-1})\big\|^2_{\mathcal H}}\, d\tau\quad \forall s,t\in(t_{k-1},t_{k}).
		\end{align*}
		Hence, by H\"older's inequality, for all $t\in[t_{k-1},t_{k}]$,
		\begin{align}\label{ineq}
			\mathbb E\big\|v_\varepsilon(t)-u(t)\big\|^2_{\mathcal H} \le \mathbb E\big\|v_\varepsilon(t_{k-1})-u(t_{k-1})\big\|^2_{\mathcal H} + 2\Big(\int_{t_k}^t \Lambda\big(u(\tau)\big)\, d\tau\Big)^{\frac{1}{2}}\Big(\int_{t_k}^t\mathbb E\big\| v_{\varepsilon}(\tau) - v_{\varepsilon}(t_{k-1}) \big\|_{\mathcal H}^2\, d\tau\Big)^{\frac{1}{2}}.
		\end{align}
		For each $l\in\mathbb N$, let $h_l:[t_{l-1},t_l]\to\mathbb R$ be given by $h_{l}(t):=\mathbb E\big\| v_{\varepsilon}(t) - v_{\varepsilon}(t_{l-1})\big\|_{\mathcal H}^2$.
		Since $k\in\mathbb N$ was arbitrary, by  (\ref{ineq}) and H\"older's inequality, 
		\begin{align}
			\nonumber\mathbb E\big\|v_\varepsilon(t)-u(t)\big\|^2_{\mathcal H} &\le 2 \Big(\int_{t_{k_t-1}}^t \Lambda\circ u\Big)^{\frac{1}{2}}\Big(\int_{t_{k_t-1}}^t h_{k_t}\Big)^{\frac{1}{2}} + 2 \sum_{l=1}^{k_{t}-1} \Big(\int_{t_{l-1}}^{t_l} \Lambda\circ u\Big)^{\frac{1}{2}}\Big(\int_{t_{l-1}}^{t_l} h_{l}\Big)^{\frac{1}{2}}\\
			\nonumber&\le 2 \Big(\int_{t_{k_t-1}}^{t}\Lambda\circ u +  \sum_{l=1}^{k_{t}-1} \int_{t_{l-1}}^{t_l} \Lambda\circ u\Big)^{\frac{1}{2}}  \Big(\int_{t_{k_t-1}}^{t} h_{k_t} +  \sum_{l=1}^{k_{t}-1} \int_{t_{l-1}}^{t_l} h_l\Big)^{\frac{1}{2}}\\
			&= 2\Big(\int_{0}^{t}\Lambda\circ u \Big)^{\frac{1}{2}}  \Big(\int_{t_{k_t-1}}^{t} h_{k_t} +  \sum_{l=1}^{k_{t}-1} \int_{t_{l-1}}^{t_l} h_l\Big)^{\frac{1}{2}},\label{forfinaline}
		\end{align}
		for every $t\in[0,+\infty)$. Now observe that, for each $l\in\mathbb{N}$ 
		\begin{align*}
			h_{l}(t)\le \mathbb E\Big(\int_{t_{l-1}}^t\big\|\dot v_{\varepsilon}\big\|_{\mathcal H}\Big)^2\le (t-t_{l})\, \mathbb E\int_{t_{l-1}}^t\big\|\dot v_{\varepsilon}\big\|_{\mathcal H}^2\le \varepsilon\int_{l-1}^t\mathbb E\big\|\dot v_{\varepsilon}\big\|_{\mathcal H}^2\, d\tau\quad\forall t\in[t_{l-1},t_l).
		\end{align*}
		From this, we get that for every $t\in[0,+\infty)$,
		\begin{align*}
			\int_{t_{k_t-1}}^{t} h_{k_t} +  \sum_{l=1}^{k_{t}-1} \int_{t_{l-1}}^{t_l} h_l&\le	\varepsilon\Big[\int_{t_{k_t-1}}^{t} \Big(\int_{t_{k_t-1}}^s \mathbb E\big\|\dot v_{\varepsilon}\big\|_{\mathcal H}^2\Big)\, ds +  \sum_{l=1}^{k_{t}-1} \int_{t_{l-1}}^{t_l} \Big(\int_{t_{l-1}}^{s} \mathbb E\big\|\dot v_{\varepsilon}\big\|_{\mathcal H}^2\Big)\, ds\Big]\\
			&\le \varepsilon\Big[\int_{t_{k_t-1}}^{t} \Big(\int_{t_{k_t-1}}^t \mathbb E\big\|\dot v_{\varepsilon}\big\|_{\mathcal H}^2\Big)\, ds +  \sum_{l=1}^{k_{t}-1} \int_{t_{l-1}}^{t_l} \Big(\int_{t_{l-1}}^{t_l} \mathbb E\big\|\dot v_{\varepsilon}\big\|_{\mathcal H}^2\Big)\, ds\Big]\\
			&\le \varepsilon\Big[(t-t_{k_t-1})\int_{t_{k_t-1}}^{t} \mathbb E\big\|\dot v_{\varepsilon}\big\|_{\mathcal H}^2+ \varepsilon\sum_{l=1}^{k_t}\int_{t_{l-1}}^{t_l}\mathbb E\big\|\dot v_{\varepsilon}\big\|_{\mathcal H}^2\Big]\\
			&\le \varepsilon^2\int_{0}^t \mathbb E\big\|\dot v_{\varepsilon}\big\|_{\mathcal H}^2.
		\end{align*}
		Combining this with (\ref{forfinaline}) yields the result.
	\end{proof}
	
	\begin{crllr}\label{corocc}
		Let $u:[0,+\infty)\to \mathcal H$ be the solution of the gradient flow equation (\ref{gf_eq}) and $v_{\varepsilon}:[0,+\infty)\to \mathcal H$ the solution of the mini-batch descent flow equation (\ref{mb_gf}). Then, for any $\alpha\in[0,1]$, 
		\begin{align*}
			\mathbb E\big\|v_\varepsilon(t)-u(t)\big\|^2_{\mathcal H}\le  2\varepsilon^{\frac{1}{2-\alpha}} \Big(\int_{0}^t \mathbb E\big\|\varepsilon^{\frac{1-\alpha}{2-\alpha}}\dot v_{\varepsilon}(\tau)\big\|_{\mathcal H}^2\Big)^{\frac{1}{2}} \Big(\int_{0}^t \Lambda\big(u(s)\big)\, ds\Big)^{\frac{1}{2}}\quad \forall t\in[0,+\infty).
		\end{align*}
	\end{crllr}

	From previous results, it is possible to say something about the expected asymptotic behavior of the mini-batch descent solution.
	\begin{thrm}\label{Asyncc}
		Let $u:[0,+\infty)\to \mathcal H$ be the solution of the gradient flow equation (\ref{gf_eq}) and $v_{\varepsilon}:[0,+\infty)\to \mathcal H$ the solution of the mini-batch descent flow equation (\ref{mb_gf}).  Suppose that $\Lambda\circ u:[0,+\infty)\to \mathcal H$ is locally integrable and that there exists $\alpha\in(0,1]$ such that, for each $j\in\{1,\dots,m\}$, $\Phi_{\mathcal B_j}$ is locally $\alpha$-H\"older continuous in its effective domain.Then the following statements hold. 
		\begin{itemize}
			\item[(i)]  Suppose  $\dom\partial\Phi_{\mathcal B_j}\subset \dom\Phi$ for all $j\in\{1,\dots,m\}$. Then, for every $\eta>0$ there exists $T>0$ such that 
			\begin{align*}
				\mathbb E\Phi(v_{\varepsilon}(T)) \le \inf_{v\in\mathcal H}\Phi(v) + \eta \quad \text{for all $\varepsilon>0$ small enough.}
			\end{align*}
			
			\item[(ii)] Suppose $\Phi:\mathcal H\to\mathbb R\cup\{+\infty\}$ is inf-compact. Then there exists $u^*\in\argmin_{v\in\mathcal H}\Phi(v)$ such that for every $\eta>0$ there exists $T>0$ such that
			\begin{align*}
				\mathbb E\big\|v_\varepsilon(T) - u^*\big\|_{\mathcal H}^2\le \eta \quad \text{for all $\varepsilon>0$ small enough.}
			\end{align*}
		\end{itemize}
	\end{thrm}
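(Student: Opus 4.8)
The plan is to factor the argument into two independent limits: a deterministic large-time limit for the gradient flow $u$, and a small-$\varepsilon$ limit that transfers the resulting behavior to $v_\varepsilon$ through the quantitative estimates already established. The backbone is the bound obtained by feeding Lemma \ref{lemmaintervals0}(ii) into Corollary \ref{corocc}: since $\{\varepsilon^{\frac{1-\alpha}{2-\alpha}}\dot v_\varepsilon\}_{\varepsilon>0}$ is bounded in $L^2([0,T];\mathcal H)$ (the bound in Lemma \ref{lemmaintervals0} is pathwise, with a deterministic constant $L_T$, hence survives taking expectations), for every fixed $T>0$ there is $C_T>0$ with
\[
\mathbb E\big\|v_\varepsilon(T)-u(T)\big\|^2_{\mathcal H}\le C_T\,\varepsilon^{\frac{1}{2-\alpha}},
\]
which tends to $0$ as $\varepsilon\to0^+$ because $\tfrac{1}{2-\alpha}>0$. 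Moreover, Corollary \ref{corobound} keeps both $u(t)$ and $v_\varepsilon(t)$ inside a fixed ball $\mathbb B_{\mathcal H}(0,r_T)$ for $t\le T$, uniformly in $\varepsilon$, which is what will let us use local Hölder bounds.

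For (i), I would first invoke the energy-dissipation identity $\frac{d}{dt}\Phi(u(t))=-\|\dot u(t)\|_{\mathcal H}^2\le0$, which makes $t\mapsto\Phi(u(t))$ nonincreasing and, for convex proper lower semicontinuous $\Phi$, forces $\Phi(u(t))\downarrow\inf_{\mathcal H}\Phi$ as $t\to\infty$. Given $\eta>0$, fix $T$ with $\Phi(u(T))\le\inf_{\mathcal H}\Phi+\eta/2$. The identity $\Phi=\sum_{j=1}^m\pi_j\Phi_{\mathcal B_j}$ (which follows from \eqref{Vertretung} together with $p_i=\sum_{i\in\mathcal B_j}\pi_j|\mathcal B_j|^{-1}$) shows that $\Phi$ is itself locally $\alpha$-Hölder on its effective domain, say with constant $L_T$ on $\mathbb B_{\mathcal H}(0,r_T)$; the hypothesis $\dom\partial\Phi_{\mathcal B_j}\subset\dom\Phi$ keeps $v_\varepsilon(T)\in\dom\Phi$, so $\Phi(v_\varepsilon(T))$ is finite and this estimate applies. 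Using Jensen's inequality for the concave map $x\mapsto x^{\alpha/2}$,
\[
\mathbb E\,\Phi(v_\varepsilon(T))\le\Phi(u(T))+L_T\,\mathbb E\big\|v_\varepsilon(T)-u(T)\big\|^\alpha_{\mathcal H}\le\Phi(u(T))+L_T\big(\mathbb E\big\|v_\varepsilon(T)-u(T)\big\|^2_{\mathcal H}\big)^{\alpha/2},
\]
and choosing $\varepsilon$ small so the last term is $\le\eta/2$ closes (i).

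For (ii), inf-compactness makes the sublevel sets of $\Phi$ compact (closed by lower semicontinuity, precompact by assumption), so $\argmin_{\mathcal H}\Phi\neq\emptyset$ and, since $\Phi(u(t))$ decreases, the orbit $\{u(t)\}$ is precompact. Combined with the classical weak convergence of gradient-flow trajectories of convex functionals to a minimizer, precompactness upgrades this to strong convergence $u(t)\to u^*$ for some $u^*\in\argmin_{\mathcal H}\Phi$. Given $\eta>0$, pick $T$ with $\|u(T)-u^*\|^2_{\mathcal H}\le\eta/4$, then split
\[
\mathbb E\big\|v_\varepsilon(T)-u^*\big\|^2_{\mathcal H}\le2\,\mathbb E\big\|v_\varepsilon(T)-u(T)\big\|^2_{\mathcal H}+2\big\|u(T)-u^*\big\|^2_{\mathcal H},
\]
and take $\varepsilon$ small enough that the backbone estimate makes the first term $\le\eta/2$.

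The main obstacle is the \emph{deterministic} asymptotics of the gradient flow: the monotone decay $\Phi(u(t))\to\inf\Phi$ in (i), and, more delicately, the strong convergence $u(t)\to u^*$ to a single minimizer in (ii). This is precisely where inf-compactness is indispensable — weak subconvergence to minimizers is standard, but identifying a unique norm limit requires compactness of the orbit to exclude escape to infinity and to promote weak to strong convergence. Once that deterministic limit is in hand, the remaining steps are a routine quantitative combination of Corollaries \ref{corobound} and \ref{corocc} with Lemma \ref{lemmaintervals0}(ii).
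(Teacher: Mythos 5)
Your proposal is correct and follows essentially the same route as the paper's proof: the backbone estimate $\mathbb E\big\|v_\varepsilon(T)-u(T)\big\|^2_{\mathcal H}\le c_T\,\varepsilon^{\frac{1}{2-\alpha}}$ from Lemma \ref{lemmaintervals0}(ii) combined with Corollary \ref{corocc}, the Jensen/H\"older step $\mathbb E\|\cdot\|^\alpha\le(\mathbb E\|\cdot\|^2)^{\alpha/2}$ together with local $\alpha$-H\"older continuity of $\Phi$ for part (i), and the triangle split against the strong limit $u(t)\to u^*$ for part (ii). The only difference is cosmetic: where you re-derive the deterministic asymptotics (energy dissipation giving $\Phi(u(t))\to\inf\Phi$, and Bruck-type weak convergence upgraded to strong convergence via inf-compactness), the paper simply cites \cite[Proposition 17.2.7]{SoBV} and \cite[Corollary 17.2.1]{SoBV} for exactly these facts, and uses the same observation that weak and strong convergence coincide on bounded subsets of the lower level sets.
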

	\begin{proof}
		Let $\eta>0$ be given. 
		\begin{itemize}
			\item[(i)] By \cite[Proposition 17.2.7]{SoBV}, there exists $T>0$ such that $\Phi(u(T))\le \inf_{v\in\mathcal H}\Phi(v) + \eta/2$. Since by Corollary \ref{corobound}, $\{v_{\varepsilon}(T)\}_{\varepsilon>0}$ is bounded, there exists $L_T>0$ such that 
			\begin{align}\label{genth}
				|\Phi(v_\varepsilon(T)) - \Phi(u(T))|\le L_T \big\| v_{\varepsilon}(T) - u(T) \big\|_{\mathcal H}^{\alpha}\quad \forall\varepsilon>0.
			\end{align}
			By Lemma \ref{lemmaintervals0} and Corollary \ref{corocc}, there exists $c_T>0$ such that
			$\mathbb E\big\| v_{\varepsilon}(T)-u(T)\big\|_{\mathcal H}^2\le c_T\varepsilon^{\frac{1}{2-\alpha}}$ for all $\varepsilon>0$. By H\"older's inequality, for all $\varepsilon>0$,
			\begin{align}\label{genth2}
				\mathbb E\big\| v_{\varepsilon}(T) - u(T) \big\|_{\mathcal H}^{\alpha}\le \Big(\mathbb E\big\| v_{\varepsilon}(T)-u(T)\big\|_{\mathcal H}^2\Big)^{\frac{\alpha}{2}}\le c_T^{\frac{\alpha}{2}} \varepsilon^{\frac{\alpha}{2(2-\alpha)}}.
			\end{align}
			Finally, from (\ref{genth}) and (\ref{genth2}),
			\begin{align*}
				\mathbb E\Phi(v_{\varepsilon}(T)) \le \Phi(u(T)) + \mathbb E|\Phi(v_{\varepsilon}(T))-\Phi(u(T))| \le  \inf_{v\in\mathcal H}\Phi(v) + \frac{\eta}{2} + L_Tc_T^{\frac{\alpha}{2}}\varepsilon^{\frac{\alpha}{2(2-\alpha)}},
			\end{align*}
			for all $\varepsilon>0$. It is then enough to take $\varepsilon$ such that $L_Tc_T^{\frac{\alpha}{2}}\varepsilon^{\frac{\alpha}{2(2-\alpha)}}<\eta/2$.

			\item[(ii)] As $\Phi$ is inf-compact and lower semicontinuous, $\argmin_{v\in\mathcal H}\Phi \neq \emptyset$. By \cite[Corollary 17.2.1]{SoBV}, there exists $u^*\in\argmin_{v\in\mathcal H}\Phi$ such that $u(t)\rightharpoonup u^*$ weakly in $\mathcal H$ as $t\to+\infty$. 
			Since on the bounded subsets of the lower level sets of $\Phi$, weak and strong convergence coincide,  $u(t)\to u^*$ strongly in $\mathcal H$ as $t\to+\infty$. Therefore, there exists $T>0$ such that $\big\| u(T) - u^*\big\|_{\mathcal H}\le \sqrt{\eta}/2$. By Lemma \ref{lemmaintervals0} and Corollary \ref{corocc}, there exists $\varepsilon_0>0$ such that $\mathbb E\big\| v_{\varepsilon}(T) - u(T)\big\|_{\mathcal H}^2\le \eta/4$ for all $\varepsilon\in(0,\varepsilon_0)$.
			Therefore, 
			\begin{align*}
				\mathbb E\big\| v_{\varepsilon}(T) - u^*\big\|_{\mathcal H}^2\le 2  	\mathbb E\big\| v_{\varepsilon}(T) - u(T)\big\|_{\mathcal H}^2 + 2 	\mathbb E\big\| u(T) - u^*\big\|_{\mathcal H}^2\le \eta\quad \forall \varepsilon\in(0,\varepsilon_0).
			\end{align*}
		\end{itemize}
		
	\end{proof}

	\subsection{Random minimizing movement}
	It follows from \cite[Theorem 17.2.2]{SoBV} that the right derivative $d^+u/dt$ of the solution of gradient flow equation (\ref{gf_eq}) exists everywhere.  For each $k\in\mathbb N$, set
	\begin{align*}
		\omega^+_{k}:= \Big\| \frac{u(t_k)-u(t_{k-1})}{\varepsilon} - \frac{d^+ u}{dt}(t_{k})\Big\|_{\mathcal H}^2.
	\end{align*}
	Observe that the sequence $\{\omega_k^+\}_{k\in\mathbb N}$ remains bounded by $4\big\|\partial \Phi(u_0)^\circ\big\|_{\mathcal H}^2$.

	\begin{lmm}\label{lemnbcar}
		Let $u:[0,+\infty)\to\mathcal H$ be the solution of the gradient flow equation (\ref{gf_eq}). Then, 
		\begin{align*}
			\mathbb E\Big\|\frac{u(t_{k})-u(t_{k-1})}{\varepsilon} +  \xi_{j_k}(u(t_{k}))\Big\|_{\mathcal H}^2 \le 2\omega_k^+ +  2\Lambda\big(u(t_k)\big)\quad \forall k\in\mathbb N.
		\end{align*}
	\end{lmm}
	\begin{proof}
		Since, by \cite[Theorem 17.2.2]{SoBV}, $d^+u/dt = \partial\Phi(u)^\circ$ everywhere in $[0,+\infty)$; for all $k\in\mathbb N$,
		\begin{align*}
			\Big\|\frac{u(t_{k})-u(t_{k-1})}{\varepsilon} +\xi_{j_k}(u(t_{k}))\Big\|_{\mathcal H}^2&\le 2	\Big\|\frac{u(t_{k})-u(t_{k-1})}{\varepsilon}-\frac{d^+u}{dt}(t_k)\Big\|_{\mathcal H}^2+ 2	\Big\| - \partial\Phi(u(t_{k}))^\circ+ \xi_{j_k}(u(t_{k}))\Big\|_{\mathcal H}^2.
		\end{align*}
		The result follows taking expectation on both sides.
	\end{proof}

	\begin{lmm}\label{lem1rmm}
		Let $u:[0,+\infty)\to\mathcal H$ be the solution of the gradient flow equation (\ref{gf_eq}) and $\{w_k\}_{k\in\mathbb N}$ the proximal sequence (\ref{gf_mm0}).  Then, for all $k\in\mathbb N$,
		\begin{align*}
			\mathbb E\big\|w_k-u(t_k)\big\|_{\mathcal H}^2  \le  2\varepsilon\Big(\sum_{l=1}^k\varepsilon \omega_k^++\sum_{l=1}^k\varepsilon\Lambda\big(u(t_l)\big)  + \sum_{l=1}^{k} \sqrt{\omega_{l}^+}\,\sqrt{\mathbb E\big\|w_{l-1} - u(t_{k-1})\big\|^2_{\mathcal H}}\Big)\quad\forall k\in\mathbb N.
		\end{align*}
	\end{lmm}
	\begin{proof}
		It is known that proximal sequence  (\ref{gf_mm0}) satisfies the differential inclusions
		\begin{align*}
			\displaystyle-\frac{w_{l}-w_{l-1}}{\varepsilon} \in \partial \Phi_{\mathcal B_{j_l}}(w_l)\quad\forall l\in\mathbb N,
		\end{align*}
		where $w_0:=u_0$.
		For each $l\in\mathbb N$, define $	f_l:=-\Big[\varepsilon^{-1}\big(u(t_{l})-u(t_{l-1})\big) + \xi_{j_l}(u(t_{l}))\Big].$
		Observe that 
		\begin{align*}
			-f_l-\frac{u(t_{l})-u(t_{l-1})}{\varepsilon} \in \partial \Phi_{\mathcal B_{j_{l}}}(u(t_{l}))\quad \forall l\in\mathbb N.
		\end{align*}
		By monotonicity of the subdifferential, 
		\begin{align}\label{stbsid}
			\Big\langle -\frac{w_{l}-w_{l-1}}{\varepsilon} + f_l+\frac{u(t_{l})-u(t_{l-1})}{\varepsilon}, w_l-u(t_l)\Big\rangle \ge 0 \quad \forall l\in\mathbb N_{}.
		\end{align}
		Set $\mathcal E_0:=0$, and for each $l\in\mathbb N$, denote $\mathcal E_{l}:=w_l - u(t_l)$. Then (\ref{stbsid}) simplifies to $\big \langle \mathcal E_{l} - \mathcal E_{l-1}, \mathcal E_l \big\rangle \le  \varepsilon \langle f_l, \mathcal E_{l}\rangle$ for all $l\in\mathbb N$. 
		Using the identity 
		\begin{align*}
			\big\|a\big\|_{\mathcal H}^2 - \big\|b\big\|_{\mathcal H}^2+\big\|a-b\big\|_{\mathcal H}^2= 	2\langle a-b, a\rangle\quad\forall a,b\in\mathcal H,
		\end{align*}
		we get $\big\|\mathcal E_l\big\|_{\mathcal H}^2 - \big\|\mathcal E_{l-1}\big\|_{\mathcal H}^2 + \big\|\mathcal E_l-\mathcal E_{l-1}\big\|_{\mathcal H}^2  \le 2 \varepsilon \langle \mathcal E_l, f_l \rangle$ for all $l\in\mathbb N$, and hence
		\begin{align*}
			\big\|\mathcal E_l\big\|_{\mathcal H}^2 - \big\|\mathcal E_{l-1}\big\|_{\mathcal H}^2 + \big\|\mathcal E_l-\mathcal E_{l-1}\big\|_{\mathcal H}^2  \le 2 \varepsilon \langle \mathcal E_l, f_l \rangle \le 2 \varepsilon \langle \mathcal E_l-\mathcal E_{l-1}, f_l \rangle + 2 \varepsilon \langle \mathcal E_{l-1}, f_l \rangle \quad \forall l\in\mathbb N.
		\end{align*}
		Employing Fenchel-Young inequality yields
		\begin{align*}
			\big\|\mathcal E_l\big\|_{\mathcal H}^2 - \big\|\mathcal E_{l-1}\big\|_{\mathcal H}^2 + \big\|\mathcal E_l-\mathcal E_{l-1}\big\|_{\mathcal H}^2\le 2\Big(\frac{\big\|\mathcal E_l-\mathcal E_{l-1}\big\|_{\mathcal H}^2}{2} +  \frac{\varepsilon^2 \big\|f_{l}\big\|_{\mathcal H}^2}{2}\Big) + 2 \varepsilon \langle \mathcal E_{l-1}, f_l \rangle,
		\end{align*}
		for all $l\in\mathbb N$.
		Hence, 
		\begin{align*}
			\big\|\mathcal E_l\big\|_{\mathcal H}^2 - \big\|\mathcal E_{l-1}\big\|_{\mathcal H}^2 + \le    \varepsilon^2 \big\|f_{l}\big\|_{\mathcal H}^2 +  2 \varepsilon \langle \mathcal E_{l-1}, f_l \rangle\quad\forall l\in\mathbb N.
		\end{align*}
		Summing up and taking expectation, 
		\begin{align*}
			\mathbb E\big\|\mathcal E_k\big\|_{\mathcal H}^2  \le  \varepsilon^2\sum_{l=1}^k \mathbb E\big\|f_k\big\|_{\mathcal H}^2 + 2\varepsilon\sum_{l=1}^{k}\mathbb E \langle \mathcal E_{k-1}, f_k \rangle\quad\forall k\in\mathbb N.
		\end{align*}
		Now, observe that $\mathbb E\langle \mathcal E_{l-1},f_l\rangle =\langle \mathbb E[\mathcal E_{l-1}], \mathbb E f_l \rangle = \langle \mathbb E[\mathcal E_{l-1}],  d^+u/dt(t_l) - \varepsilon^{-1}\big(u(t_l)-u(t_{l-1})\big)\rangle$ for all $l\in\mathbb N$, and that, by Lemma \ref{lemnbcar}, $\mathbb E\big\| f_l\big\|_{\mathcal H}^2\le2 \omega_l^+ + 2\Lambda\big(u(t_l)\big)$ for all $l\in\mathbb N$. Thus, 
		\begin{align*}
			\mathbb E\big\|\mathcal E_k\big\|_{\mathcal H}^2   \le  2\varepsilon^2\Big(\sum_{l=1}^k \omega_l^++\sum_{l=1}^k\Lambda\big(u(t_l)\big) \Big) + 2\varepsilon\sum_{l=1}^{k} \sqrt{\omega_{l}^+}\,\mathbb E\big\|\mathcal E_{l-1}\big\|_{\mathcal H}\quad\forall k\in\mathbb N.
		\end{align*}
		Applying Cauchy-Schwartz inequality and rearranging yields the desired estimate. 
	\end{proof}
	
	\begin{lmm}\label{lem2rmm}
		Let $u:[0,+\infty)\to\mathcal H$ be the solution of the gradient flow equation (\ref{gf_eq}) and $\{w_k\}_{k\in\mathbb N}$ the proximal sequence (\ref{gf_mm0}). Then, 
		\begin{align*}
			\mathbb E\big\|w_k - u(t_k)\big\|_{\mathcal H}^2\le 4\varepsilon\Big(2t_k\sum_{l=1}^k\omega_{l}^++\sum_{l=1}^{k}\varepsilon\Lambda\big(u(t_l)\big)\Big)\quad \forall k\in\mathbb N.
		\end{align*}
	\end{lmm}
	\begin{proof}
		Let $k\in\mathbb N$ be given.  For each $l\in\mathbb N$, denote $\mathcal E_{l}:=w_l - u(t_l)$. By Lemma \ref{lem1rmm}, for all $l\in\{1,\dots, k\}$, 
		\begin{align*}
			\mathbb E\big\|\mathcal E_l\big\|^2 &\le 2\varepsilon\Big(\sum_{i=1}^l\varepsilon \omega_i^++\sum_{i=1}^l\varepsilon\Lambda\big(u(t_i)\big)  + \sum_{i=1}^{l} \sqrt{\omega_{i}^+}\,\sqrt{\mathbb E\big\|\mathcal E_{i-1}\big\|_{\mathcal H}^2}\Big)\\
			&\le 2\varepsilon\Big(\sum_{i=1}^k\varepsilon \omega_i^++\sum_{i=1}^k\varepsilon\Lambda\big(u(t_i)\big)  + \sum_{i=2}^{k} \sqrt{\omega_{i}^+}\,\sqrt{\mathbb E\big\|\mathcal E_{i-1}\big\|_{\mathcal H}^2}\Big)\\
			&\le 2\varepsilon\Big(\sum_{i=1}^k\varepsilon \omega_i^++\sum_{i=1}^k\varepsilon\Lambda\big(u(t_i)\big)  + \Big[\max_{\{1,\dots,k\}}\mathbb E\big\|\mathcal E_{i}\big\|_{\mathcal H}^2\Big]^{\frac{1}{2}} \sum_{i=2}^{k} \sqrt{\omega_{i}^+}\Big).
		\end{align*}
		Let $x_k:=\displaystyle\Big[\max_{\{1,\dots,k\}}\mathbb E\big\|\mathcal E_i\big\|_{\mathcal H}^2\Big]^{\frac{1}{2}}$,  $a_k:=\displaystyle\varepsilon\sum_{i=2}^{k} \sqrt{\omega_{i}^+}$, and
		\begin{align*}
			b_k:= \sqrt{2\varepsilon}\Big(\sum_{i=1}^k\varepsilon \omega_i^++4\varepsilon\sum_{i=1}^k\varepsilon\Lambda\big(u(t_i)\big)  \Big)^{\frac{1}{2}}.
		\end{align*}
		Then, $x_k^2 \le 2a_k x_k+ b_k^2$; this implies $x_k \le 2a_k + b_k$, that is, 
		\begin{align*}
			\max_{\{1,\dots,k\}}\mathbb E\big\|\mathcal E_i\big\|_{\mathcal H}^2&\le \Big(2\varepsilon\sum_{i=2}^{k} \sqrt{\omega_{i}^+} +\sqrt{2\varepsilon}\Big(\sum_{i=1}^k\varepsilon \omega_i^++4\varepsilon\sum_{i=1}^k\varepsilon\Lambda\big(u(t_i)\big)  \Big)^{\frac{1}{2}}\Big)^2\\
			&\le 8\varepsilon^2\Big(\sum_{i=2}^{k} \sqrt{\omega_{i}^+}\Big)^2 + 4\varepsilon\Big(\sum_{i=1}^k\varepsilon \omega_i^++4\varepsilon\sum_{i=1}^k\varepsilon\Lambda\big(u(t_i)\big)  \Big).
		\end{align*}
		Applying Cauchy-Schwartz inequality, and rearranging
		\begin{align*}
			\max_{\{1,\dots,k\}}\mathbb E\big\|\mathcal E_i\big\|_{\mathcal H}^2\le 4\varepsilon^2\Big(2(k-1)\sum_{i=2}^k\omega_i^+ +\sum_{i=1}^k\omega_i^++ \sum_{i=1}^{k}\Lambda\big(u(t_i)\big)\Big)\le 4\varepsilon^2\Big(2k\sum_{i=1}^k\omega_{i}^++\sum_{i=1}^{k}\Lambda\big(u(t_i)\big)\Big).
		\end{align*}
	\end{proof}

	\begin{thrm}\label{thrrmm}
		Let $u:[0,+\infty)\to\mathcal H$ be the solution of the gradient flow equation (\ref{gf_eq}) and $w_{\varepsilon}:[0,+\infty)\to\mathcal H$ the function generated by the proximal sequence (\ref{gf_mm0}). Let $T>0$, and set $N:=\lfloor T/\varepsilon \rfloor$. Then, 
		\begin{align*}
			\mathbb E\big\|w_\varepsilon(t) - u(t)\big\|_{\mathcal H}^2\le 4\varepsilon\Big(2T\sum_{i=1}^{N}\omega_{i}^++\sum_{i=1}^{N}\varepsilon\Lambda\big(u(t_i)\big)\Big)+2\varepsilon^2\big\|\partial\Phi(u_0)^\circ\big\|_{\mathcal H}^2.\quad \forall t\in[0,T].
		\end{align*}
	\end{thrm}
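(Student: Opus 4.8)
The plan is to deduce the estimate from the grid bound of Lemma~\ref{lem2rmm}, upgrading it from the discrete times $t_k$ to an arbitrary $t\in[0,T]$ by controlling the oscillation of $u$ across a single step with the a priori Lipschitz bound on the gradient flow. The role of $N=\lfloor T/\varepsilon\rfloor$ is precisely to guarantee $t_N\le T$, so that the grid estimate can be read off at the last full node before $T$.

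I would fix $t\in[0,T]$ and set $k:=k_t$, so that $w_\varepsilon(t)=w_k$ and $t\in[t_{k-1},t_k)$; since $t_{k-1}=(k-1)\varepsilon\le t\le T$, the index obeys $k\le N+1$. Because $w_\varepsilon$ is piecewise constant while $u$ is continuous, I would bridge the two through the node $t_k$, decomposing $w_\varepsilon(t)-u(t)=\mathcal E_k+\bigl(u(t_k)-u(t)\bigr)$, where $\mathcal E_k=w_k-u(t_k)$ is exactly the quantity estimated in Lemmas~\ref{lem1rmm} and \ref{lem2rmm}. Applying the elementary inequality $\|a+b\|_{\mathcal H}^2\le 2\|a\|_{\mathcal H}^2+2\|b\|_{\mathcal H}^2$ and taking expectations then splits the target into a \emph{grid part} $\mathbb E\|\mathcal E_k\|_{\mathcal H}^2$ and a \emph{continuity part} $\|u(t_k)-u(t)\|_{\mathcal H}^2$.

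For the grid part I would invoke Lemma~\ref{lem2rmm}. Since $\omega_l^+\ge 0$ and $\Lambda\circ u\ge 0$, the right-hand side there is monotone both in the index and in the endpoint $t_k$, so for the indices in play it is majorized by $4\varepsilon\bigl(2T\sum_{i=1}^{N}\omega_i^+ +\sum_{i=1}^{N}\varepsilon\,\Lambda(u(t_i))\bigr)$. For the continuity part I would use the fact, already exploited in Corollary~\ref{corobound}, that the gradient-flow velocity is nonincreasing in norm, $\|\dot u(s)\|_{\mathcal H}=\|\partial\Phi(u(s))^\circ\|_{\mathcal H}\le\|\partial\Phi(u_0)^\circ\|_{\mathcal H}$ for a.e.\ $s$, so that $u$ is globally Lipschitz with constant $\|\partial\Phi(u_0)^\circ\|_{\mathcal H}$. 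As $0\le t_k-t\le\varepsilon$, this yields $\|u(t_k)-u(t)\|_{\mathcal H}^2\le\varepsilon^2\|\partial\Phi(u_0)^\circ\|_{\mathcal H}^2$, whose contribution after the split is precisely the additive term $2\varepsilon^2\|\partial\Phi(u_0)^\circ\|_{\mathcal H}^2$. Combining the two bounds gives an estimate of the asserted form, uniformly in $t\in[0,T]$, with the principal constant inherited from Lemma~\ref{lem2rmm} and the additive term reproduced exactly.

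The step I expect to demand the most care is the terminal, generally incomplete, subinterval: for $t\in[t_N,T]$ one has $k_t=N+1$, so that $t_{k_t}=(N+1)\varepsilon$ may exceed $T$ and the sums in Lemma~\ref{lem2rmm} formally run to $N+1$. I would handle this by using $t_{N+1}\le T+\varepsilon$ together with $\omega^+,\Lambda\ge 0$ to relabel the sums back to $N$, and by checking that the leftover contributions are of strictly higher order: they carry an extra factor $\varepsilon$ and may be controlled through the uniform bound $\omega_l^+\le 4\|\partial\Phi(u_0)^\circ\|_{\mathcal H}^2$ noted before Lemma~\ref{lemnbcar}, so that they are absorbed into the stated constants. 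Tracking these constants carefully at the last node, so that the principal term keeps the coefficient coming from Lemma~\ref{lem2rmm} and the velocity bound supplies exactly the $\varepsilon^2$ correction, is the only genuinely delicate point; the remainder of the argument is the two-term decomposition described above.
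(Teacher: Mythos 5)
Your proposal follows the paper's proof essentially verbatim: the same two-term split $w_\varepsilon(t)-u(t)=\big(w_{k}-u(t_{k})\big)+\big(u(t_{k})-u(t)\big)$ at the node $t_{k}$ with $k=k_t$, the inequality $\|a+b\|_{\mathcal H}^2\le 2\|a\|_{\mathcal H}^2+2\|b\|_{\mathcal H}^2$, the Lipschitz bound $\|u(t_{k})-u(t)\|_{\mathcal H}\le\varepsilon\|\partial\Phi(u_0)^\circ\|_{\mathcal H}$ for the continuity part, and Lemma \ref{lem2rmm} for the grid part. The only difference is that you explicitly treat the terminal partial subinterval (where $k_t=N+1$ and the lemma's sums formally run past $N$) and flag the constant bookkeeping from the splitting, both of which the paper's proof silently glosses over; these affect only harmless multiplicative constants, not the order in $\varepsilon$, so your additional care is a refinement rather than a departure.
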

	\begin{proof}
		Let $k\in\mathbb N$. For $t\in[t_{k-1},t_{k})$, $w_{\varepsilon}(t) = w_{k}$, and hence 
		\begin{align*}
			\big\|w_\varepsilon(t) - u(t)\big\|_{\mathcal H}^2\le 2 \big\|w_k - u(t_k)\big\|_{\mathcal H}^2 + 2\big\|u(t_k)- u(t)\big\|_{\mathcal H}^2\le 2 \big\|w_k - u(t_k)\big\|_{\mathcal H}^2 +2\varepsilon^2\big\|\partial\Phi(u_0)^\circ\big\|_{\mathcal H}^2.
		\end{align*}
		The result follows then from Lemma \ref{lem2rmm}.
	\end{proof}
	\begin{crllr}\label{corfin}
		Let $u:[0,+\infty)\to\mathcal H$ be the solution of the gradient flow equation (\ref{gf_eq}) and $w_{\varepsilon}:[0,+\infty)\to\mathcal H$ the function generated by the proximal sequence (\ref{gf_mm0}). Let $T>0$. Assume that $\Lambda\circ u$ is bounded in $[0,T]$. If $u$ belongs to $C^1\big([0,T];\mathcal H\big)$, then 
		\begin{align*}
			\sup_{t\in[0,T]}\mathbb E\big\|w_\varepsilon(t) - u(t)\big\|_{\mathcal H}^2\longrightarrow 0\quad \text{as}\quad \varepsilon\longrightarrow0^+.
		\end{align*}
		Moreover, if there exists $\alpha\in(0,1]$ such that $\dot u$ is $\alpha$-H\"older continuous in $[0,T]$, then there exists $c_T>0$ such that 
		\begin{align*}
			\sup_{t\in[0,T]}\mathbb E\big\|w_\varepsilon(t) - u(t)\big\|_{\mathcal H}^2\le c_T\big(\varepsilon^{2\alpha}+\varepsilon^2+\varepsilon\big)\quad \forall\varepsilon>0.
		\end{align*}
	\end{crllr}
	\begin{proof}
		Since $\dot u$ is continuous in the compact set $[0,T]$, it is uniformly continuous over it; hence there exists $\eta:[0,T]\to[0,+\infty)$ such that
		\begin{align*}
			\big\|\dot u(t)-\dot u(s)\big\|_{\mathcal H}\le \eta\big(|t-s|\big) \quad \forall s,t\in[0,T].
		\end{align*}
		Set $N:=\lfloor T/\varepsilon \rfloor$, for all $k\in\{1,\dots,N\}$,
		\begin{align*}
			\sqrt{\omega_k^+}= \Big\| \frac{u(t_k)-u(t_{k-1})}{\varepsilon} - \frac{d^+ u}{dt}(t_{k})\Big\|_{\mathcal H}=\Big\|\frac{1}{\varepsilon}\int_{t_{k-1}}^{t_k}\big(\dot u(s)-\dot u(t_k)\big)\,ds\Big\|_{\mathcal H}\le \frac{1}{\varepsilon}\int_{t_{k-1}}^{t_k}\big\|\dot u(s) - \dot u(t_k)\big\|_{\mathcal H}\,ds.
		\end{align*}
		This implies $\omega_k^+\le \eta(\varepsilon)^2$ for all $k\in\{1,\dots,N\}$. Now, from Theorem \ref{thrrmm}, 
		\begin{align*}
			\mathbb E\big\|w_\varepsilon(t) - u(t)\big\|_{\mathcal H}^2\le 8T^2\eta(\varepsilon)^2+\varepsilon\sum_{i=1}^{N}\varepsilon\Lambda\big(u(t_i)\big)+2\varepsilon^2\big\|\partial\Phi(u_0)^\circ\big\|_{\mathcal H}^2\quad \forall t\in[0,T].
		\end{align*}
		Let $M:=\sup_{[0,T]}|\Lambda(u(t))|$. Then, 
		\begin{align}\label{estimatermm}
			\sup_{t\in[0,T]}\mathbb E\big\|w_\varepsilon(t) - u(t)\big\|_{\mathcal H}^2\le 8T^2\eta(\varepsilon)^2+M\varepsilon+2\varepsilon^2\big\|\partial\Phi(u_0)^\circ\big\|_{\mathcal H}^2.
		\end{align}
		Letting $\varepsilon\to0^+$, yields the first part the result. Now, if $\dot u$ is $\alpha$-H\"older continuous, the modulus of continuity of $\dot u$ over $[0,T]$ can be taken as $\eta(t)=L_Tt^\alpha$.  Then, (\ref{estimatermm}) becomes
		\begin{align*}
			\sup_{t\in[0,T]}\mathbb E\big\|w_\varepsilon(t) - u(t)\big\|_{\mathcal H}^2\le 8T^2\varepsilon^{2\alpha}+M\varepsilon+2\varepsilon^2\big\|\partial\Phi(u_0)^\circ\big\|_{\mathcal H}^2.
		\end{align*}
	\end{proof}

\section{Applications to optimization and partial differential equations}\label{sec:applications}
In this section, we will show how our main results can be applied to problems related to optimization and partial differential equations. We will also provide some numerical examples to illustrate our results.

	\subsection{Constrained optimization}\label{projdyn}
	Let $\mathcal H$ be a Hilbert space and $C$ a closed convex bounded subset of $\mathcal H$.  Let $\Psi_1,\dots,\Psi_{n}:\mathcal H\to\mathbb R$ be convex continuously differentiable functions. Let $\pi_1,\dots,\pi_{m}$ be positive numbers such that $\pi_1+\dots+\pi_m=1$. Consider the average potential $\Psi:\mathcal H\to\mathbb R$ given by
	\begin{align*}
		\Psi(u):= \sum_{j=1}^m\pi_{j}\Psi_{j}(u).
	\end{align*} 
	In this subsection, we are interested in the following constrained optimization problem: \begin{align}\label{consprob}
		\min_{u\in C} \Psi(u).
	\end{align}
	It is well know that if $u^*\in C$ is a minimizer of problem (\ref{consprob}), then $\langle \nabla \Psi(u^*), v\rangle\ge 0$ for all $v\in T_C(u^*)$, where $T_{C}:\mathcal H\twoheadrightarrow \mathcal H$ is the tangent cone mapping of $C$.
	From \cite[Proposition 17.2.12]{SoBV}, given an initial datum $u_{0}\in C$, the gradient flow associated with the problem (\ref{consprob}) is given by 
	\begin{align}\label{projgf}
		\left\{ \begin{array}{l} \dot u(t) = - \proj_{T_C(u(t))}\big(\nabla \Psi (u(t))\big), \\  u(0)=u_0. \end{array} \right.
	\end{align}
	Following the procedure described in Section \ref{sec:problem_formulation}, it is possible to construct a continuous function, depending on a parameter $\varepsilon>0$,  $v_{\varepsilon}:[0,+\infty)\to\mathcal H$ such that $v_{\varepsilon}(0) = u_0$, and for each $k\in\mathbb{N}$,
	\begin{align}\label{projgfmb}
		0 = {\dot v_{\varepsilon}}(t) + \proj_{T_C(u(t))}\big(\nabla \Psi_{j_k}(v_{\varepsilon}(t))\big)\quad \text{for a.e. $t\in[(k-1)\varepsilon, k\varepsilon)$}.
	\end{align}
	Here, $\{j_{l}\}_{k\in\mathbb N}$ is a sequence of random variables taking value $j\in\{1,\dots,m\}$ with probability $\pi_j$.  
	
	\subsubsection{Convergence and asymptotic behavior}
	In order to quantify the variance induced by the replacement of gradients over time, consider the function $\Gamma:\mathcal H\to\mathbb R$ given by
	\begin{align*}
		\Gamma(u):=\sum_{j=1}^m \pi_{j} \big\|\nabla \Psi_{j}(u)-\nabla\Psi(u)\big\|_{\mathcal H}^2.
	\end{align*}
	Observe that $	\mathbb E\big[\nabla\Psi_{j_k}\big] = \nabla\Psi$ and $\Var\big[\nabla\Psi_{j_k}\big] = \Gamma$ for all $k\in\mathbb N$.  Also, since $\Gamma$ is continuous, $\Gamma\circ u:[0,+\infty)\to\mathbb R$ is locally bounded. 
	\begin{thrm}\label{th:theorem_41}
		There exists a unique locally absolutely continuous function $v_{\varepsilon}:[0,+\infty)\to \mathcal H$ satisfying $v_{\varepsilon}(0)=u_0$ and (\ref{projgfmb}). Moreover, $v_{\varepsilon}$ is locally Lipschitz, and the  following statements hold. 
		\begin{itemize}
			\item[(i)] For every $T>0$ there exists $c_{T}>0$ such that 
			\begin{align*}
				\sup_{t\in[0,T]}\mathbb E\big\| v_{\varepsilon}(t)-u(t)\big\|_{\mathcal H}^2\le c_{T}\varepsilon \int_{0}^T\Gamma\big(u(s)\big)\,ds \quad\forall \varepsilon>0.
			\end{align*}
			
			\item[(ii)] For every $\eta>0$ there exists $T>0$ such that 
			\begin{align*}
				\mathbb E\Phi(v_{\varepsilon}(T)) \le \inf_{v\in\mathcal H}\Phi(v) + \eta \quad \text{for all $\varepsilon>0$ small enough.}
			\end{align*}
			
			\item[(iii)] There exists $u^*\in\argmin_{v\in C}\Psi(v)$ such that for every $\eta>0$ there exists $T>0$ satisfying
			\begin{align*}
				\mathbb E\big\|v_\varepsilon(T) - u^*\big\|_{\mathcal H}^2\le \eta \quad \text{for all $\varepsilon>0$ small enough.}
			\end{align*}
		\end{itemize}
	\end{thrm}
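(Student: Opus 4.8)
The plan is to recognize \eqref{projgfmb} as an instance of the abstract mini-batch descent flow \eqref{mb_gf} and then invoke the theorems of Sections \ref{sec:problem_formulation} and \ref{sec:well_posedness_convergence}. Concretely, I would set $\Phi_j:=\Psi_j+\iota_C$, where $\iota_C$ is the indicator function of $C$ (equal to $0$ on $C$ and $+\infty$ elsewhere), and take the singleton batches $\mathcal B_j=\{j\}$ with the given weights $\pi_j$. Each $\Phi_j$ is convex, proper and lower semicontinuous; since $C$ is bounded and $\Psi_j$ is continuous convex (hence bounded below on bounded sets by an affine minorant), one has $\inf\Phi_j>-\infty$, and $\dom\Phi=\dom\Phi_j=\overline{\dom\Phi}=C$, so \eqref{assu:contention_domains} holds. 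By \cite[Proposition 17.2.12]{SoBV} the gradient flow of $\Phi_j=\Psi_j+\iota_C$ is precisely the projected dynamics $\dot v=-\proj_{T_C(v)}(\nabla\Psi_j(v))$; hence \eqref{projgfmb} coincides with \eqref{mb_gf} for this choice of potentials, with $\Phi=\sum_j\pi_j\Phi_j=\Psi+\iota_C$.

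Next I would verify the two estimator hypotheses. Writing $N_C$ for the normal cone mapping, one has $\partial\Phi_j(u)=\nabla\Psi_j(u)+N_C(u)$ and $\partial\Phi(u)=\nabla\Psi(u)+N_C(u)$; since $N_C(u)$ is a convex cone (closed under nonnegative combinations) and $\sum_j\pi_j=1$, the sum rule \eqref{assu:sum_rule} follows from $\sum_j\pi_j\bigl(\nabla\Psi_j(u)+N_C(u)\bigr)=\nabla\Psi(u)+N_C(u)$. To identify the variance, set $\eta^*:=\partial\Phi(u)^\circ-\nabla\Psi(u)$, which lies in $N_C(u)$ because $\partial\Phi(u)^\circ\in\partial\Phi(u)$, and define $\xi_j(u):=\nabla\Psi_j(u)+\eta^*\in\partial\Phi_j(u)$. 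Then $\sum_j\pi_j\xi_j(u)=\nabla\Psi(u)+\eta^*=\partial\Phi(u)^\circ$, so \eqref{eq:represent_subdif} holds, and $\xi_j(u)-\partial\Phi(u)^\circ=\nabla\Psi_j(u)-\nabla\Psi(u)$, whence $\Lambda=\Gamma$. The point requiring care here is that, although the projection onto the tangent cone is nonlinear, unbiasedness is recovered by using a common normal-cone correction $\eta^*$ rather than projecting each $\nabla\Psi_j$ separately.

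With the framework in place, existence, uniqueness and regularity are immediate: since $\dom\partial\Phi_j=C=\dom\partial\Phi$, Theorem \ref{existencethrm}(ii) produces a unique locally Lipschitz $v_\varepsilon$. For (i), $\Gamma$ is continuous, so $\Gamma\circ u$ is locally integrable, and each $\Phi_j$ is locally Lipschitz on its effective domain $C$ (as $\Psi_j\in C^1$); Theorem \ref{resultcontinuouscase} then gives the $O(\varepsilon)$ bound $\mathbb E\|v_\varepsilon(t)-u(t)\|_{\mathcal H}^2\le 2\varepsilon\,\|\dot v_\varepsilon\|_{L^2([0,t);\mathcal H)}\big(\int_0^t\Gamma(u(s))\,ds\big)^{1/2}$, and the $L^2$-norm of $\dot v_\varepsilon$ is controlled via Lemma \ref{lemmaintervals0}, using that $\proj_{T_C(v_\varepsilon)}$ is nonexpansive and fixes the origin, so $\|\dot v_\varepsilon\|_{\mathcal H}\le\|\nabla\Psi_{j_k}(v_\varepsilon)\|_{\mathcal H}$ stays bounded on the bounded set $C$; absorbing these bounds into $c_T$ yields the displayed estimate.

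Finally, (ii) and (iii) follow from the asymptotic result Theorem \ref{Asyncc} applied to $\Phi=\Psi+\iota_C$ in the locally Lipschitz ($\alpha=1$) case. Statement (ii) uses $\dom\partial\Phi_j\subset\dom\Phi$, which holds since both equal $C$, and is a direct transcription of Theorem \ref{Asyncc}(i). For (iii) one applies Theorem \ref{Asyncc}(ii), and boundedness and closedness of the convex set $C$ guarantee weak compactness, hence $\argmin_C\Psi\neq\emptyset$. I expect the main obstacle to be exactly this last point: Theorem \ref{Asyncc}(ii) requires $\Phi$ to be inf-compact so that the gradient flow converges \emph{strongly} to some $u^*\in\argmin_C\Psi$, whereas a merely closed, bounded, convex $C$ only furnishes weak compactness in infinite dimensions. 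The cleanest resolution is to invoke inf-compactness of $\Phi=\Psi+\iota_C$ (automatic when $C$ is compact, e.g.\ in finite dimensions, or under an added compactness assumption on $C$), after which the splitting $\mathbb E\|v_\varepsilon(T)-u^*\|_{\mathcal H}^2\le 2\,\mathbb E\|v_\varepsilon(T)-u(T)\|_{\mathcal H}^2+2\,\|u(T)-u^*\|_{\mathcal H}^2$ from the proof of Theorem \ref{Asyncc}(ii) closes the argument.
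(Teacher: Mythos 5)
Your proposal is correct and follows essentially the same route as the paper: the same potentials $\Phi_j=\Psi_j+\delta_C$, the same unbiased decomposition $\xi_j(u)=\nabla\Psi_j(u)+\eta^*(u)$ with a common normal-cone correction yielding $\Lambda=\Gamma$, and the same appeal to Theorems \ref{existencethrm}, \ref{resultcontinuouscase} and \ref{Asyncc}. Your closing caveat is well taken: the paper invokes the asymptotic result for item (iii) without comment, even though a closed bounded convex $C$ furnishes only weak compactness in infinite dimensions, so the inf-compactness of $\Phi=\Psi+\delta_C$ that you make explicit is genuinely needed there and is a refinement of, not a deviation from, the paper's argument.
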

	\begin{proof}
		For each $j\in\{1,\dots, m\}$, define $\Phi_{j}:=\Psi_j + \delta_{C}$, where $\delta_C:\mathcal H\to\mathbb R\cup\{+\infty\}$ is the indicator function of $C$. By Moreau–Rockafellar subdifferential additivity rule, $\partial\Phi_j = \nabla\Psi_j + N_{C}(u)$ for each $j\in\{1,\dots,m\}$. Let $\Phi:=\Psi + \delta_{C}$; we see that $\partial \Phi = \sum_{j=1}^m\pi_j\partial \Phi_{j}$. For each $u\in C$, denote by $\eta^*(u)$ the unique element in $N_{C}(u)$ such that $\partial \Phi(u)^\circ = \nabla \Psi(u) + \eta^*(u)$. For each $j\in\{1,\dots, m\}$, define $\xi_{j}:C\to \mathcal H$ by $\xi_j(u) := \nabla\Psi_j(u)+ \eta^*(u)$.  We see that  $\sum_{j=1}^m\pi_j\xi_j(u) = \nabla\Psi(u) + \eta^*(u)=\partial\Phi(u)^\circ$ for all $u\in C$. Consider now, the function $\Lambda:C\to \mathbb R$ in (\ref{Lambda}) based on the previous decomposition of the minimal norm subdifferential; we see then that
		\begin{align*}
			\Lambda(u) = \sum_{j=1}^m\pi_j\big\|\xi_j(u)-\partial\Phi(u)^\circ\big\|_{\mathcal H}^2=\sum_{j=1}^m \pi_{j} \big\|\nabla \Psi_{j}(u)-\nabla\Psi(u)\big\|_{\mathcal H}^2 = \Gamma(u).
		\end{align*}
		Now, observe that since, for each $j\in\{1,\dots,m\}$, $\Psi_j$ is locally Lipschitz, so is $\Phi_j$ over $C$.  We can then employ Theorems \ref{existencethrm} and \ref{resultcontinuouscase} to conclude the result.
	\end{proof}

	\subsubsection{Illustrative numerical example}
	To illustrate Theorem \ref{th:theorem_41}, let $u=(u^1,u^2)\in\R^2$ and $u_d=(u^1_d,u^2_d)$, we consider the quadratic programming problem given by
 \begin{align}\label{eq:ex_num_1_1}
     \min_{u\in \mathcal{C}} \left\{ \Psi(u) =\left(2|u^1-u_d^1|+3|u^2-y_d|^2 -2u^1 -3u^2   \right)\right\},
  \end{align}
where the feasible set $\mathcal{C}$ is a convex set, defined by the $u=(u^1,u^2)$ such that 
\begin{align*}
       \mathcal{C}=\{u=(u^1,u^2)\in \R^2\,:\,5u^1+3u^2\leq 120,\, 4u^1+6u^2\leq 150, \,u^1-2u^2 \leq 0,\, u^1\geq 7,\, u^2\leq 15\}. 
\end{align*}
Now, we consider the gradient flow associated with \eqref{eq:ex_num_1_1}, which corresponds to the system \eqref{projgf}. To implement it, we consider the dynamic in a fixed time interval $[0,T]$ with $T=10$ and $h=0.01$ as time steps. We fix $u_d=(10,20)$.We look at three initial points: $(8,4)$, $(13,8)$, and $(20,14)$, which are at a corner, inside, and on the boundary of the feasible set, respectively. We use an explicit Euler method to compute the gradient flow. The projection onto the feasible set $\mathcal{C}$ is performed through an iterative process in which, after each gradient descent step, any violated constraints are sequentially enforced by adjusting the solution in the direction opposite to the normal vectors of the feasible set boundary. The solution of this system is illustrated in \Cref{fig:constrain}. 

\begin{figure}
\centering
\subfloat[]{
\includegraphics[width=0.33\textwidth]{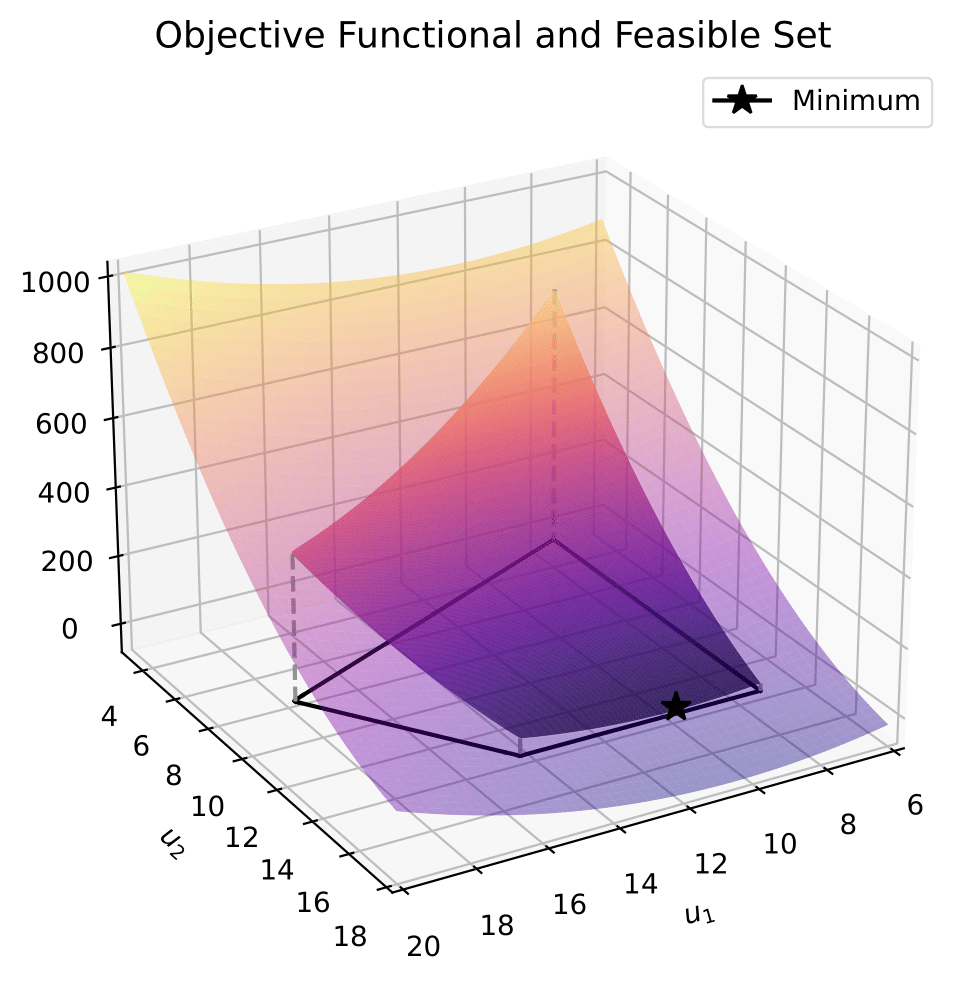}}
\subfloat[]{
\includegraphics[width=0.33\textwidth]{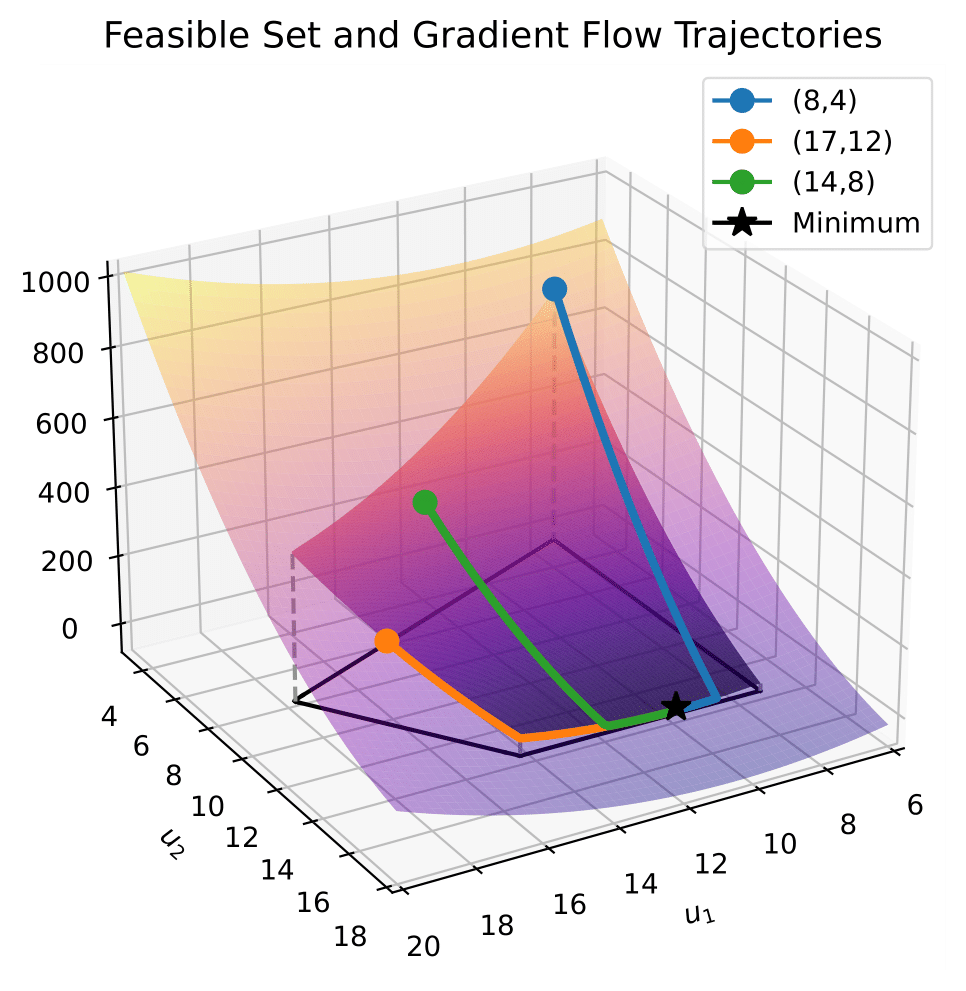}}
\subfloat[ ]{
\includegraphics[width=0.3\textwidth]{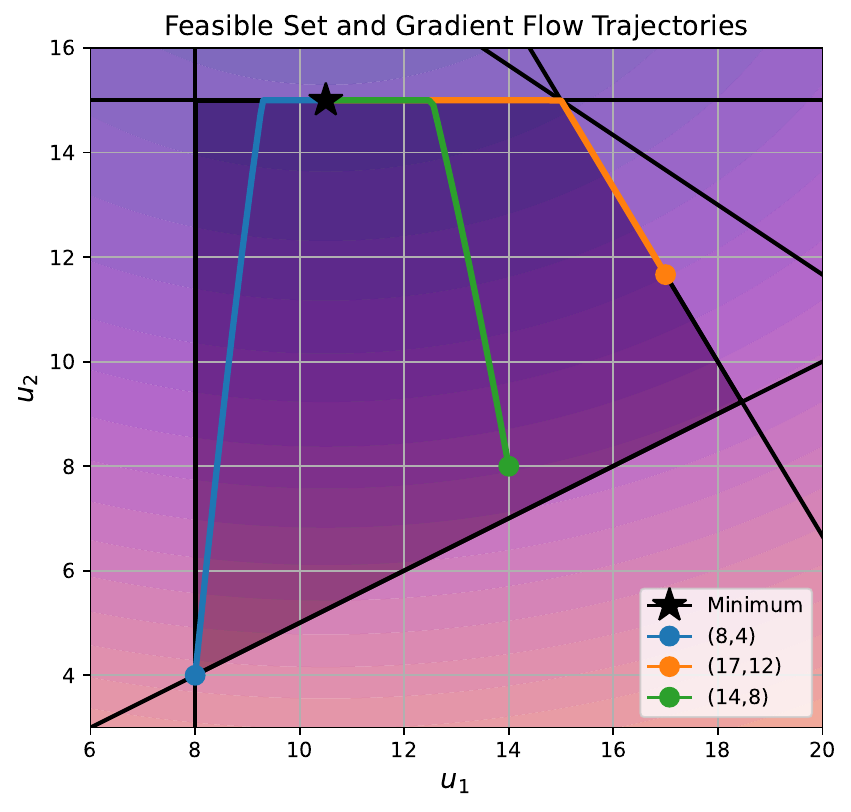}}
\caption{(A) Illustration of the cost function for $(u_1, u_2) \in [6, 10] \times [4, 18]$. We can also observe the feasible set (and its projection onto $\mathbb{R}^2$), illustrated with a darker color. The star denotes the minimum of the functional in the feasible set. (B) Trajectories of the gradient flow for different initial points. (C) Projection of the feasible set onto the plane, along with the trajectories of the gradient flow. The contour lines of the functional are also illustrated.}\label{fig:constrain}
\end{figure}

In \Cref{fig:constrain}, we observe that starting the dynamics from point $(8,4)$, placed at a corner, the dynamics evolve inside the feasible set until it reaches the boundary and then move to the minima of the functional $\Phi$. A similar behavior is seen in the dynamics with initial point $(13,8)$. For the point $(20,14)$, which is placed on the boundary of the feasible set, the trajectory stays on the boundary until it reaches the minimum.

For each $j\in\{1,2,3\}$ let us consider the functionals $\Psi_j$ given by 
\begin{align*}
    \Psi_1(u)=\pi_1\Psi,\quad \Psi_2(u)=\pi_2(4|u^1-u_d^1|-4u^1),\quad \Psi_3(u)= \pi_3(6|u^2-y_d|^2 -6u^2),
\end{align*}
where $\pi_1=1/2$ and $\pi_2=\pi_3=1/4$. Observe that $\Phi(u)=\sum_{j=1}^3\pi_j\Phi_j(u)$. Then, choosing $\varepsilon=0.04$, we can introduce the mini-batch gradient descend dynamics defined by the system \eqref{projgfmb} that evolves in the time interval $[0,T]$. To solve numerically this problem, we consider the same setting as the gradient flow.

In \Cref{fig:constrain_MB}, several realizations are considered to estimate the average mini-batch descent flow. We observe that the estimated average is close to the gradient flow trajectories. Additionally, due to the construction of the sub-functionals, we observe that $\Psi_2$ only depends on $u^1$ and $\Psi_3$ only depends on $u^2$. Therefore, when the second or third sub-functional is randomly chosen, the mini-batch trajectories move only in the $u^1$ or $u^2$ directions, respectively.
 
\begin{figure}
\centering
\subfloat[]{
\includegraphics[width=0.33\textwidth]{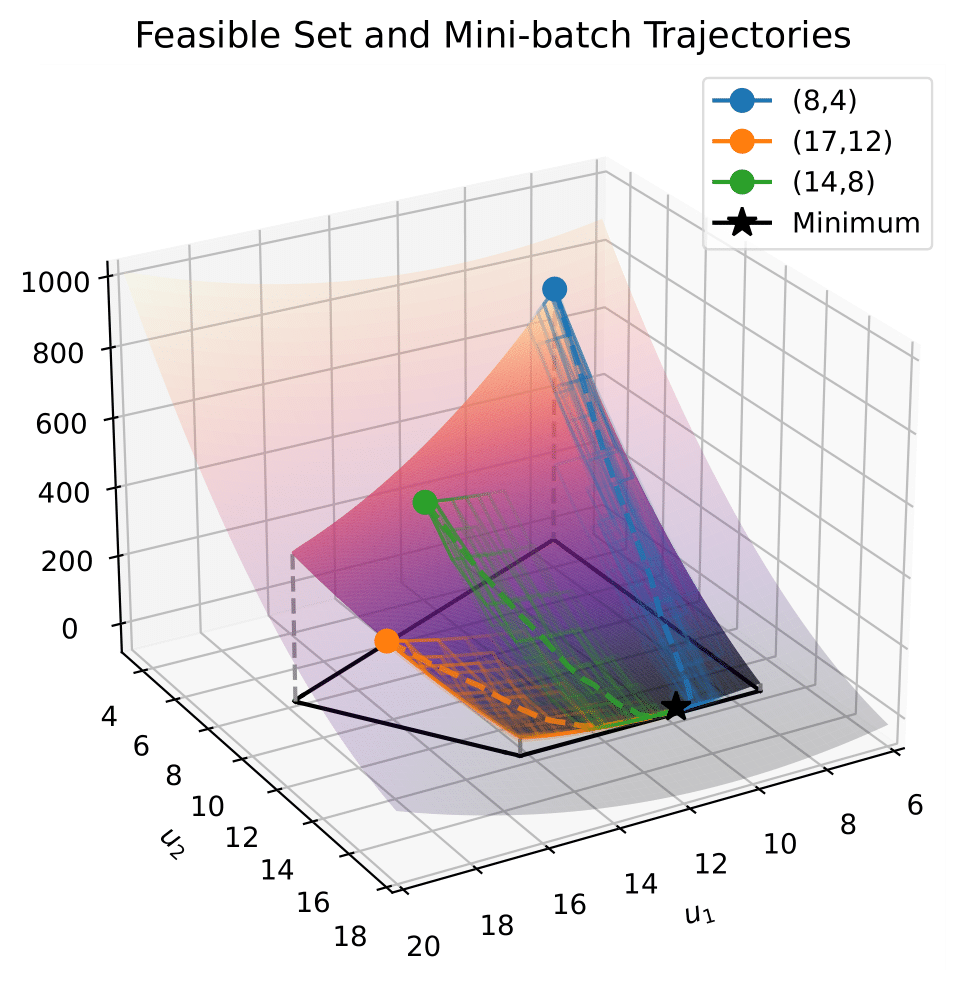}}
\subfloat[]{
\includegraphics[width=0.33\textwidth]{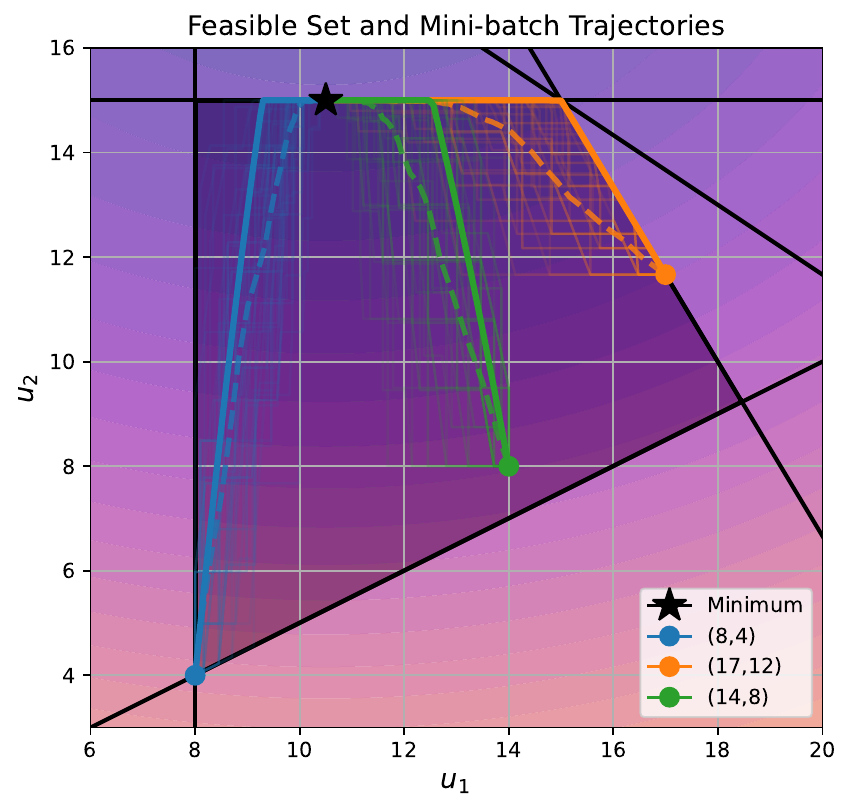}}
\caption{(A) Thin lines show different realizations of the mini-batch descent flow. The dashed line illustrates the average of the different realizations to approximate the average mini-batch descent flow. The solid line shows the previously calculated gradient flow. (B) We illustrate the projection onto the $\R^2$ plane of the trajectories mentioned in (A).}\label{fig:constrain_MB}
\end{figure}

Finally, to corroborate Theorem \ref{th:theorem_41}, we denote by $K$ the number of batch switching in the interval $[0,T]$, of the mini-batch  flow, that is, $K=1/\varepsilon$. Then, taking $K\to \infty$
 (equivalent $\varepsilon\to0$) we can observe the linear convergence (guaranteed by Theorem \ref{th:theorem_41}) in \cref{fig:constrain_convergence}.

\begin{figure}
\centering
\includegraphics[width=0.4\textwidth]{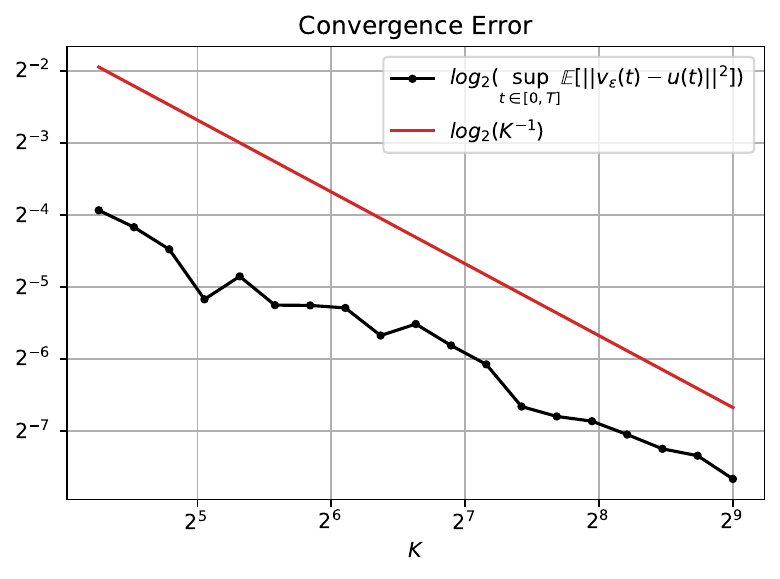}
\caption{We illustrate the convergence as $K\to \infty$ (equivalent to $\varepsilon\to0$) of the mini-batch descent flow.}\label{fig:constrain_convergence}
\end{figure}

	\subsection{Sparse inversion}\label{Sparseinv}
	Let $r,\,d$ positive integers and $\lambda\ge 0$. Consider the matrices $A\in\mathbb R^{r\times d}$ and $b\in \mathbb R^r$. We are concerned with the following \textit{sparse inversion problem}.
	\begin{align}\label{sparseinversion}
		\min_{u\in \mathbb R^d} \left\lbrace \frac{1}{2}\big\| Au - b \big\|_{2}^2 + \lambda\big\| u \big\|_{1} \right\rbrace.
	\end{align}
	The idea behind problem (\ref{Sparseinv}) is to find a ``sparse" $u\in\mathbb R^d$ such that $Au\approx b$.  
	
	Let $\Phi:\mathbb R^d\to\mathbb R$ be the objective functional associated with problem (\ref{Sparseinv}), that is,  \[\Phi(u):=\frac{1}{2}\big\| Au - b \big\|_{2}^2 + \lambda\big\| u \big\|_{1}.\]
	We can readily see that  $\partial\Phi(u)=A^\top\big(Au-b\big)+\Theta(u)$, where $\Theta:\mathbb R^d\twoheadrightarrow \mathbb R^d$ is the set-valued mapping given by
	\begin{align*}
		\Theta(u) := \left\lbrace \eta\in\mathbb R^d:\, \eta_i(x) \in\begin{cases}
			\{1\} & \text{if} \, u_i(x)>0 \\
			[-1,1] & \text{if} \, u_i(x) = 0\\
			\{-1\} & \text{if} \, u_i(x) < 0
		\end{cases} \,\,\, \text{for all $i\in\{1,\dots,d\}$} \right\rbrace .
	\end{align*}

	Given an initial datum $u_0\in \mathbb R^d$, the gradient flow associated with problem (\ref{Sparseinv}), from now on called \textit{sparse inversion flow}, is given by
	\begin{align}\label{eq:gf_sparse}
		\left\{ \begin{array}{l} -\dot u(t)\in A^\top\big(Au(t)-b\big)+\lambda\Theta(u(t)) , \\  u(0)=u_0. \end{array} \right.
	\end{align}
	
	Following the procedure described in Section \ref{sec:problem_formulation}, it is possible to construct a continuous function, depending on a parameter $\varepsilon>0$,  $v_{\varepsilon}:[0,+\infty)\to\mathcal H$ such that $v_{\varepsilon}(0) = u_0$, and for each $k\in\mathbb{N}$,
	\begin{align}\label{inversionmb1}
		&-{\dot v_{\varepsilon}}(t) = \frac{1}{\pi_1}A^\top\big(Av_{\varepsilon}(t)-b\big) \quad \text{ if $j_k=1$},\\
		&-{\dot v_{\varepsilon}}(t) \in \frac{\lambda}{\pi_2}\Theta\big(v_\varepsilon(t)\big) \quad \text{if $j_k=2$}.\label{inversionmb2}
	\end{align}
	for a.e. $t\in[(k-1)\varepsilon, k\varepsilon)$. Here, $\{j_{l}\}_{k\in\mathbb N}$ is a sequence of random variables taking values $j_k=1$ with probability $\pi_1$, and value $j_k=2$ with probability $\pi_2$.  We see then that for $k\in\mathbb N$, 
	\begin{align}\label{eq:explicit_sol_sparse}
		v_{\varepsilon}(t) = \begin{cases}
			\displaystyle e^{-\pi_1^{-1}A^\top A t} v_{\varepsilon}(t_{k-1}) + \pi^{-1}_1\int_{t_{k-1}}^{t} e^{-\pi_1^{-1}A^\top A (t-s)} A^\top b\, ds& \text{if}\ j_k=1 \\
			\big(\sgn v_{\varepsilon}^i(t_{k-1})\max\{|v^i_\varepsilon(t_{k-1})|-\pi_2^{-1}\lambda t,0\}\big)_{i=1}^d & \text{if}\ j_k=2
		\end{cases}\quad \forall t\in[t_{k-1},t_k].
	\end{align}
	At step $k\in\mathbb N$, if $j_k=1$, there is a closed form solution of (\ref{inversionmb1}); on the other hand, if $j_k=2$, the solution of  (\ref{inversionmb2}) possesses a reasonable expression and can be computed by linearly reducing the vector components of the preceding step to zero.
	\subsubsection{Convergence and asymptotic behavior}
	In order to quantify the variance induced by replacing the gradients over a time interval, consider the function $\Gamma:\mathcal H\to\mathbb R$ given by
	\begin{align*}
		\Gamma(u):=\frac{\pi_2^2}{\pi_1}\big\| A^\top \big(A u - b\big)\big\|_2^2 + \frac{\pi_1^2}{\pi_2} (\lambda d)^2.
	\end{align*}
	It is not hard to see that $\Gamma\circ u:[0,+\infty)\to\mathbb R$ is bounded.
	\begin{thrm}\label{thm:sparce_inversion}
		There exists a unique locally absolutely continuous function $v_{\varepsilon}:[0,+\infty)\to \mathcal H$ satisfying $v_{\varepsilon}(0)=u_0$ and (\ref{inversionmb1})-(\ref{inversionmb2}). Moreover, $v_{\varepsilon}$ is locally Lipschitz, and the following statements hold. 
		\begin{itemize}
			\item[(i)] For every $T>0$ there exists $c_{T}>0$ such that 
			\begin{align*}
				\sup_{t\in[0,T]}\mathbb E\big\| v_{\varepsilon}(t)-u(t)\big\|_{\mathcal H}^2\le c_{T}\varepsilon \int_{0}^T\Gamma\big(u(s)\big)\,ds \quad\forall \varepsilon>0.
			\end{align*}
			
			\item[(ii)] For every $\eta>0$ there exists $T>0$ such that 
			\begin{align*}
				\mathbb E\Phi(v_{\varepsilon}(T)) \le \inf_{v\in\mathcal H}\Phi(v) + \eta \quad \text{for all $\varepsilon>0$ small enough.}
			\end{align*}
			
			\item[(iii)] There exists $u^*\in\argmin_{v\in \mathbb R^d}\Phi(v)$ such that for every $\eta>0$ there exists $T>0$ such that
			\begin{align*}
				\mathbb E\big\|v_\varepsilon(T) - u^*\big\|_{\mathcal H}^2\le \eta \quad \text{for all $\varepsilon>0$ small enough.}
			\end{align*}
		\end{itemize}
	\end{thrm}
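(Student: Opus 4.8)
The plan is to recognize the sparse inversion flow \eqref{eq:gf_sparse} and its randomized counterpart \eqref{inversionmb1}--\eqref{inversionmb2} as a two-batch instance of the abstract scheme of Section~\ref{sec:problem_formulation}, and then to transfer the conclusions of Theorems~\ref{existencethrm}, \ref{resultcontinuouscase} and \ref{Asyncc}, in complete analogy with the proof of Theorem~\ref{th:theorem_41}. Concretely, I would take $m=2$, $\mathcal H=\mathbb R^d$, and set
\begin{align*}
    \Phi_{\mathcal B_1}(u):=\frac{1}{2\pi_1}\big\|Au-b\big\|_2^2,\qquad \Phi_{\mathcal B_2}(u):=\frac{\lambda}{\pi_2}\big\|u\big\|_1,
\end{align*}
so that $\Phi=\pi_1\Phi_{\mathcal B_1}+\pi_2\Phi_{\mathcal B_2}$, with $\partial\Phi_{\mathcal B_1}(u)=\frac{1}{\pi_1}A^\top(Au-b)$ and $\partial\Phi_{\mathcal B_2}(u)=\frac{\lambda}{\pi_2}\Theta(u)$. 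With these identifications the piecewise inclusions \eqref{inversionmb1}--\eqref{inversionmb2} are precisely \eqref{mb_gf}, and \eqref{eq:explicit_sol_sparse} is the explicit solution on each subinterval (a linear ODE when $j_k=1$, coordinatewise soft-shrinkage when $j_k=2$).

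The next step is to verify the standing hypotheses and to pin down the variance. Every functional involved is finite on all of $\mathbb R^d$, so \eqref{assu:contention_domains} is immediate and $\dom\partial\Phi_{\mathcal B_j}=\mathbb R^d=\dom\Phi$. Since $\Phi_{\mathcal B_1}$ is differentiable and $\Phi_{\mathcal B_2}$ is finite and continuous, the Moreau--Rockafellar sum rule gives $\partial\Phi(u)=A^\top(Au-b)+\lambda\Theta(u)=\pi_1\partial\Phi_{\mathcal B_1}(u)+\pi_2\partial\Phi_{\mathcal B_2}(u)$, which is exactly \eqref{assu:sum_rule}. As in Theorem~\ref{th:theorem_41}, I would fix the minimal-norm decomposition: writing $\partial\Phi(u)^\circ=A^\top(Au-b)+\lambda\theta^\circ(u)$, where $\theta^\circ(u)\in\Theta(u)$ is the unique, measurably selected minimizer of $\|A^\top(Au-b)+\lambda\theta\|_2$, forces $\xi_1(u)=\frac{1}{\pi_1}A^\top(Au-b)$ and $\xi_2(u)=\frac{\lambda}{\pi_2}\theta^\circ(u)$ in \eqref{eq:represent_subdif}. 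Substituting these into \eqref{Lambda} yields an explicit formula for $\Lambda(u)$ assembled from $\|A^\top(Au-b)\|_2^2$ and $\|\lambda\theta^\circ(u)\|_2^2$; as each coordinate of $\theta^\circ(u)$ lies in $[-1,1]$ one has $\|\lambda\theta^\circ(u)\|_2\le\lambda d$, and these are precisely the ingredients of $\Gamma$. Because the flow $u$ remains bounded---its distance to a minimizer of $\Phi$ being nonincreasing along the gradient flow---both $\Gamma\circ u$ and $\Lambda\circ u$ are bounded, hence locally integrable.

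Finally I would read off the three assertions. Both batch potentials are locally Lipschitz on $\mathbb R^d$: $\Phi_{\mathcal B_1}$ is smooth and $\Phi_{\mathcal B_2}$ is globally Lipschitz. Hence Theorem~\ref{existencethrm}(ii) provides the unique, locally Lipschitz $v_\varepsilon$ with $v_\varepsilon(0)=u_0$, and Theorem~\ref{resultcontinuouscase} delivers the $\mathcal{O}(\varepsilon)$ rate of part~(i), with the variance displayed through the integral of $\Gamma$. For parts~(ii) and~(iii) I would invoke Theorem~\ref{Asyncc} with $\alpha=1$: the inclusion $\dom\partial\Phi_{\mathcal B_j}\subset\dom\Phi$ holds trivially and yields~(ii), while the coercivity of $\Phi$---ensured for instance by $\lambda>0$---furnishes inf-compactness in $\mathbb R^d$ and yields~(iii). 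The one genuine obstacle is the set-valued $\ell_1$ subdifferential $\Theta$: one must justify the Moreau--Rockafellar rule for $\partial\Phi$, construct a measurable minimal-norm selection $\theta^\circ(\cdot)$ so that $\xi_2$ is well defined in the sense of \eqref{eq:represent_subdif}, and then control the interaction between the smooth gradient $A^\top(Au-b)$ and the shrinkage subgradient $\lambda\theta^\circ(u)$ through the crude bound $\|\lambda\theta^\circ(u)\|_2\le\lambda d$ when estimating $\Lambda$.
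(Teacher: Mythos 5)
Your proposal follows essentially the same route as the paper's proof: the same scaled batch potentials $\Phi_1(u)=(2\pi_1)^{-1}\|Au-b\|_2^2$ and $\Phi_2(u)=\pi_2^{-1}\lambda\|u\|_1$, the Moreau--Rockafellar sum rule, the minimal-norm splitting $\partial\Phi(u)^\circ=A^\top(Au-b)+\lambda\theta^\circ(u)$ with $\xi_1,\xi_2$ defined accordingly, the bound of $\Lambda$ by $\Gamma$ using $\|\theta^\circ(u)\|_\infty\le 1$, and the conclusion via Theorems~\ref{existencethrm}, \ref{resultcontinuouscase} and \ref{Asyncc}. If anything, you are slightly more explicit than the paper, which cites only Theorems~\ref{existencethrm} and \ref{resultcontinuouscase} while parts (ii)--(iii) indeed rest on Theorem~\ref{Asyncc}; also note that your measurable-selection worry dissolves since the minimal-norm element $\theta^\circ(u)$ is unique and $\xi_2\circ u$ inherits measurability from $-\dot u=\partial\Phi(u)^\circ$ along the flow.
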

	\begin{proof}
		Define $\Phi_1:\mathbb R^d\to \mathbb R$ by  $\Phi_1(u):= (2\pi_1)^{-1}\big\|Au-b\big\|_2^2$ and $\Phi_2:\mathbb R^d\to\mathbb R$ by $\Phi_2(u):= \pi_2^{-1}\lambda\big\| u \big\|_{1}$.
		By Moreau–Rockafellar subdifferential additivity rule, $\partial \Phi =\pi_1\partial \Phi_{1} + \pi_2\partial\Phi_2$. For each $u\in \mathbb R^d$, denote by $\eta^*(u)$ the unique element in $\Theta(u)$ such that $\partial \Phi(u)^\circ = A^\top(Au-b)+ \eta^*(u)$. Let $\xi_{1}:\mathbb R^d\to \mathbb R$ be given by $\xi_1(u) = A^\top(Au-b)$, and $\xi_{2}:\mathbb R^d\to \mathbb R$ by $\xi_2(u) = \eta^*(u)$.
		We see that  $\pi_1\xi_j(u) + \pi_2 \xi_2(u) =\partial\Phi(u)^\circ$ for all $u\in \mathbb R^d$. Consider now, the function $\Lambda:\mathbb R^d\to \mathbb R$ in (\ref{Lambda}) based on the previous decomposition of the minimal norm subdifferential. It is not hard to see that  $	\Lambda(u)\le \Gamma(u)$ for all $u\in\mathbb R^d$. We can then employ Theorems \ref{existencethrm} and \ref{resultcontinuouscase} to conclude the result.
	\end{proof}

	\subsubsection{ Illustrative numerical example}
	To illustrate Theorem \ref{thm:sparce_inversion}, consider the following numerical example. Let $A\in \R^{2\times 2}$ and $b\in \R^2$ ($d=r=2$) given by
 \begin{align*}
     A=\begin{pmatrix}
         1.76 & 0.4\\
        0.98 & 2.24 
     \end{pmatrix},\quad b=\begin{pmatrix}
         1.87\\ -0.98
     \end{pmatrix},
 \end{align*}
 and we take $\lambda=1$. Let us denote by $u_*=(u_*^1,u_*^2)$ the optimal of the sparse inversion problem \eqref{sparseinversion}. In this particular case $u_*=(0.65,-0.45)$.  Now, let us consider the gradient flow \eqref{eq:gf_sparse}, associated with the sparse inversion problem in the fixed time interval $[0,T]$. We fix the time horizon $T=5$ and the initial condition $u_0=(0,0)$. For the implementation, we consider a time step $h=0.01$, and using an explicit Euler discretization, we numerically solve \eqref{eq:gf_sparse}. The trajectory computed is shown in \Cref{fig:sparse}. We can observe in  \Cref{fig:sparse} how the gradient flow converges to the minimum of the objective functional. The horizontal lines in \Cref{fig:sparse} (B) denote the optimal value $u_*=(u_*^1,u_*^2)$.  

\begin{figure}
\centering
\subfloat[]{
\includegraphics[height=4.7cm]{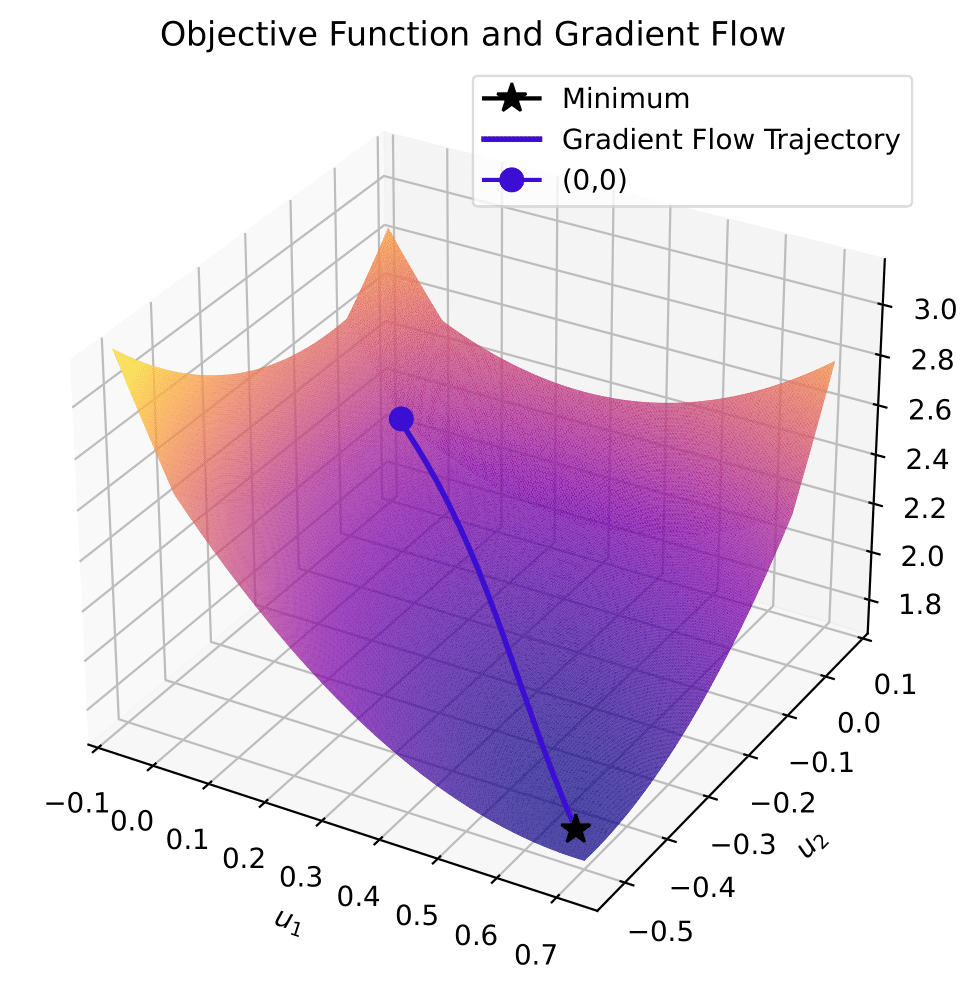}}
\subfloat[]{
\includegraphics[height=4.7cm]{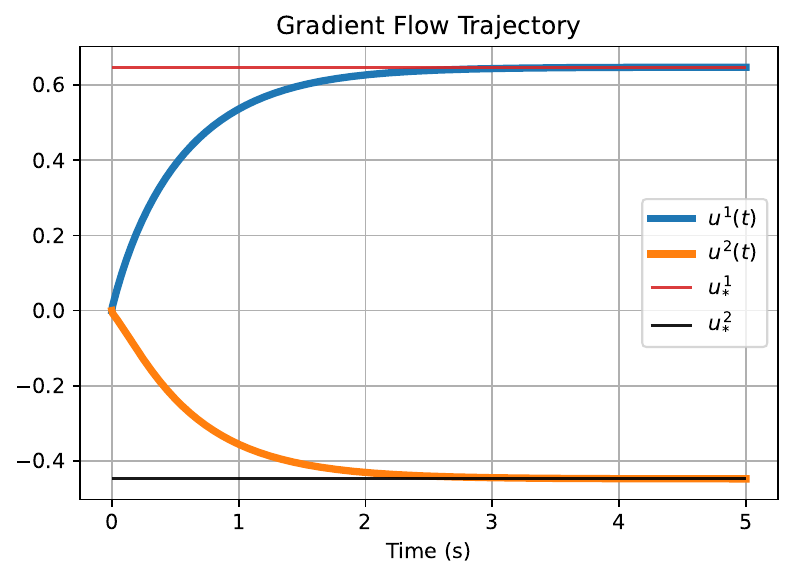}}
\caption{(A) Illustration of the gradient descent trajectories defined by system \eqref{eq:gf_sparse}. The black star denotes the minimum of $\Phi$. (B) Each curve corresponds to a different coordinate of the trajectory. Horizontal lines mark the optimal of $\Phi$.). }\label{fig:sparse}
\end{figure}
 \smallbreak

 On the other hand, for the mini-batch descent flow, we consider $\varepsilon=0.04$. Then, using the same setting of the gradient flow, we compute the solution of \eqref{inversionmb1}-\eqref{inversionmb2}. To approximate the expected trajectory of the mini-batch descent flow, we compute different realizations of the trajectory and then average them. In \Cref{fig:sparse2}, we observe different realizations of the mini-batch descent flow and their expected value.   

\begin{figure}
\centering
\subfloat[]{
\includegraphics[height=4.7cm]{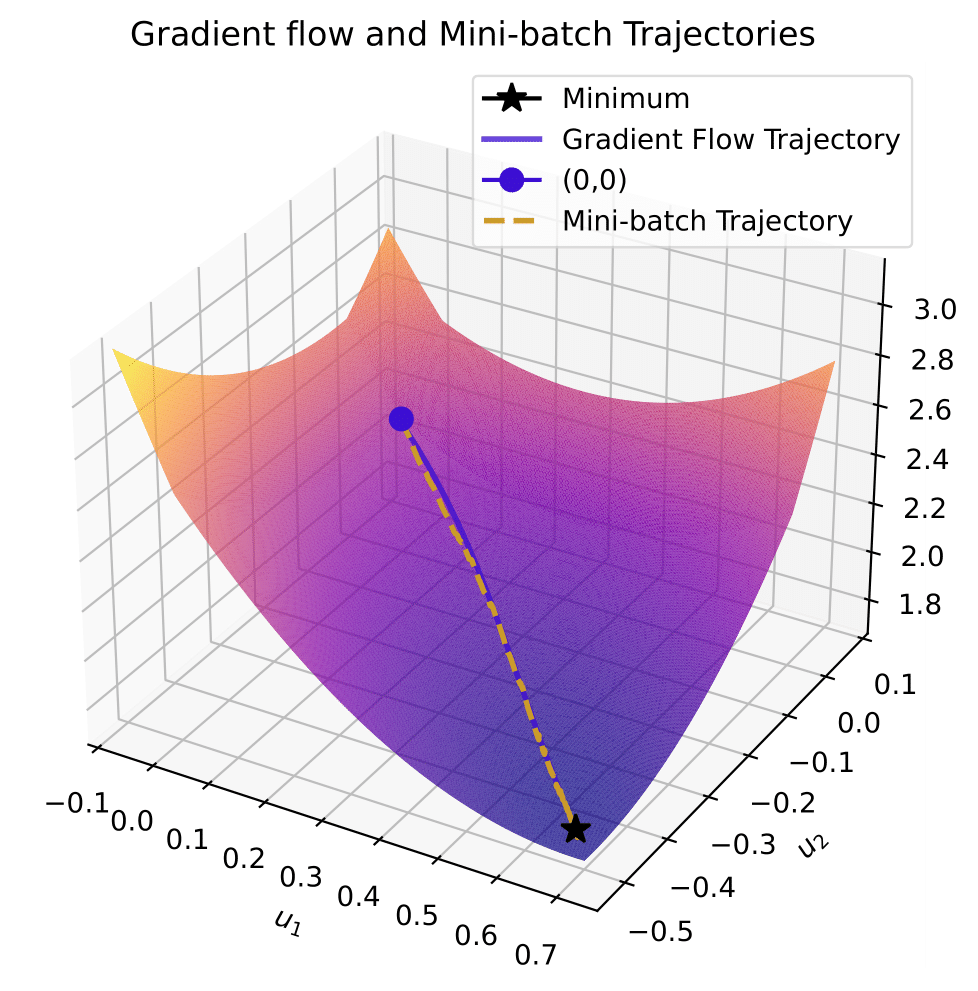}}
\subfloat[]{
\includegraphics[height=4.7cm]{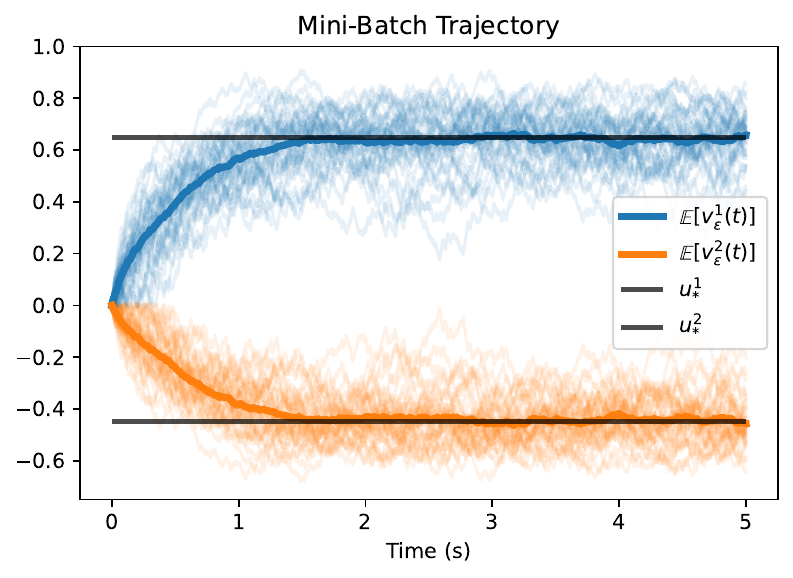}}
\caption{(A) Illustration of the expected value of the mini-batch descent flow trajectory. This trajectory star is from the same value of the gradient flow.
(B) Illustration of the mini-batch descent flow. Thin curves represent different realizations for each coordinate, while thick lines indicate the average outcomes of the mini-batch descent flow. }\label{fig:sparse2}
\end{figure}

To illustrate the rate of convergence guaranteed by Theorem \ref{thm:sparce_inversion}, we take $\varepsilon=1/K$ with $K$ being the number of batch switching in $[0,T]$. Then, \Cref{fig:sparse3} shows the convergence of the mini-batch descent flow to the gradient flow as $K\to \infty$ (equivalent $\varepsilon\to 0 $).

\begin{figure}
\centering
\includegraphics[width=0.4\textwidth]{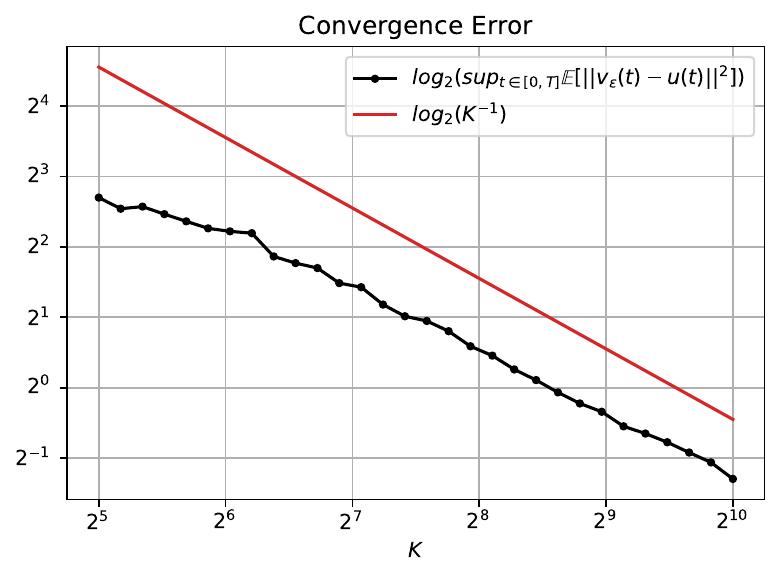}
\caption{Illustration of the convergence as $K \to \infty$ (equivalent to $\varepsilon \to 0$}\label{fig:sparse3}
\end{figure}

\smallbreak
In the previous example, we considered $d=r=2$. However, it is possible to consider a larger matrix size and compare the computation time taken to solve the gradient flow versus the mini-batch descent flow. This is illustrated in \Cref{tab:comp_time}, where the speedup corresponds to the ratio between the execution time of the gradient flow and the mini-batch descent flow.
\begin{table}
\centering
\caption{Computational Time Between Gradient and Mini-batch Algorithms. Here, we have considered $r=\lceil d/2\rceil$. The values of the matrices are chosen randomly from a uniform distribution $U(0,1)$.}
\label{tab:comp_time}
\begin{tabular}{ccccc}
\toprule
\textbf{Matrix Size A ($d \times d$)} & \textbf{Gradient Flow (s)} & \textbf{mini-batch descent flow (s)} & \textbf{Speedup} \\
\midrule
$5 \times 5$ & 0.005 & 0.004 & 1.289 \\
$50 \times 50$ & 0.006 & 0.005 & 1.248 \\
$100 \times 100$ & 0.008 & 0.006 & 1.316 \\
$200 \times 200$ & 0.017 & 0.014 & 1.236 \\
$400 \times 400$ & 0.040 & 0.033 & 1.206 \\
\bottomrule
\end{tabular}
\end{table}

 \Cref{tab:comp_time} shows that as the matrix size increases, the mini-batch descent flow  maintains a consistent speedup, demonstrating improved computational efficiency compared to the gradient flow.

	\subsection{Domain decomposition for the parabolic obstacle problem}\label{Domaindecom}
	In this section, we describe a random domain decomposition algorithm for parabolic-type equations that can be rewritten as gradient flows.

	In order to make the presentation simpler, we focus on the particular case of the obstacle problems.
	
	\subsubsection{Problem formulation}
	Let $\Omega\subset\mathbb R^d$ be an open bounded set. Let $\psi \in H^2(\Omega)$ be a given function satisfying $\psi\le0$ a.e. on $\partial \Omega$, from now on called \textit{the obstacle}, and let $K(\psi)$ be given by
	\begin{align*}
		K(\psi)=\left\{  u \in L^2(\Omega) \mid u(x) \geq \psi(x)\,\,\,\,\, \text{for  a.e. } x \in \Omega \right\}.
	\end{align*}
	Note that $K \neq \emptyset$ because $\psi^+ = \max(\psi,0) \in K$. Moreover, observe that $K$ is a convex subset of $L^2(\Omega)$. Let $T>0$ and consider the problem of find $u(t)\in K(\psi)\cap H_0^2(\Omega)$ for a.e. $t\in (0,T)$ such that for all $\phi\in K(\psi)$ the variational inequality
	\begin{align}\label{eq:obstacle_problem}
		\begin{cases}
			\int_{\Omega} \dot u (\phi-u) dx -\int_\Omega \Delta u (\phi-u) - \int_\Omega f(\phi-u) dx\geq 0\quad&
			\text{a.e. }t\in (0,T),\\
			u(x,0)=u_0(x)& \text{in }\Omega,
		\end{cases}
	\end{align}
	holds. This problem is known as the parabolic obstacle problem; in this case, we are considering a stationary obstacle. Concerning the well-posedness, we have the following result.

\begin{thrm}\label{th:regularity_u_obs}
		Let $u_0 \in K(\psi)$ and $f\in L^2(0,T;L^2(\Omega))$. Then, there exists a unique absolute continuous solution $u\in C([0,T];L^2(\Omega))$ of \eqref{eq:obstacle_problem}. Moreover, $u(t)\in \{\phi\in K(\psi)\cap H_0^1(\Omega)\,:\, \Delta \phi\in L^2(\Omega)\}$ for a.e. $t\in (0,T)$.
\end{thrm}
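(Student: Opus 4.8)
The plan is to recognize \eqref{eq:obstacle_problem} as a forced gradient flow in $\mathcal H=L^2(\Omega)$ driven by the subdifferential of the obstacle energy, and then to combine the abstract well-posedness theory for such equations with a penalization argument for the regularity. Accordingly, I would introduce $\Phi:L^2(\Omega)\to\mathbb R\cup\{+\infty\}$ given by $\Phi(u)=\tfrac12\int_\Omega|\nabla u|^2\,dx$ when $u\in H_0^1(\Omega)\cap K(\psi)$ and $\Phi(u)=+\infty$ otherwise. The first task is to verify that $\Phi$ is proper, convex and lower semicontinuous on $L^2(\Omega)$. Properness uses the hypothesis $\psi\le 0$ on $\partial\Omega$: then $\psi^+=\max(\psi,0)$ has vanishing trace, so $\psi^+\in H_0^1(\Omega)$, and since $\psi^+\ge\psi$ we get $\psi^+\in\dom\Phi$. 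Convexity is clear, and lower semicontinuity follows by the direct method: along an $L^2$-convergent sequence with bounded energy one extracts a weakly $H_0^1$-convergent subsequence whose limit still satisfies $u\ge\psi$ a.e.\ and for which the Dirichlet energy is weakly lower semicontinuous. A standard truncation-and-mollification argument also shows $\overline{\dom\Phi}^{\,L^2}=K(\psi)$, so the hypothesis $u_0\in K(\psi)$ means exactly $u_0\in\overline{\dom\Phi}$.

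Next I would reformulate the problem. The subdifferential is characterized by: $w\in\partial\Phi(u)$ if and only if $u\in H_0^1(\Omega)\cap K(\psi)$ and $\int_\Omega\nabla u\cdot\nabla(\phi-u)\,dx\ge\int_\Omega w(\phi-u)\,dx$ for all $\phi\in H_0^1(\Omega)\cap K(\psi)$. Once it is known that $\Delta u\in L^2(\Omega)$, an integration by parts turns the left-hand side into $-\int_\Omega\Delta u\,(\phi-u)\,dx$, so that, with the choice $w=f(t)-\dot u(t)$, the inclusion $f(t)-\dot u(t)\in\partial\Phi(u(t))$ becomes precisely the variational inequality in \eqref{eq:obstacle_problem}. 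Hence \eqref{eq:obstacle_problem} is equivalent to the forced differential inclusion $\dot u(t)+\partial\Phi(u(t))\ni f(t)$ with $u(0)=u_0$.

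For this inclusion, existence and uniqueness of a solution $u\in C([0,T];L^2(\Omega))$ that is absolutely continuous on compact subsets of $(0,T]$ and satisfies $u(t)\in\dom\partial\Phi$ for a.e.\ $t$ follow from the standard theory of evolution equations governed by the subdifferential of a proper, convex, lower semicontinuous functional perturbed by an $L^2$ forcing term, e.g.\ \cite[Theorem 4.11]{Barbu2010}, using $f\in L^2(0,T;L^2(\Omega))$ and $u_0\in\overline{\dom\Phi}=K(\psi)$. The regularizing effect of the flow yields $u(t)\in\dom\Phi=H_0^1(\Omega)\cap K(\psi)$ for a.e.\ $t$.

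The remaining and, I expect, main difficulty is the regularity claim, which amounts to the inclusion $\dom\partial\Phi\subset\{\phi\in K(\psi)\cap H_0^1(\Omega):\Delta\phi\in L^2(\Omega)\}$, i.e.\ the $L^2$-regularity of the Laplacian for the stationary obstacle problem. I would establish it by penalization: for $w\in\partial\Phi(u)$, approximate the constraint by the penalized equations $-\Delta u_\lambda=w+\tfrac1\lambda(\psi-u_\lambda)^+$ in $H_0^1(\Omega)$. Writing $\theta_\lambda:=\tfrac1\lambda(\psi-u_\lambda)^+\ge0$ and rearranging gives $-\Delta(u_\lambda-\psi)=w+\Delta\psi+\theta_\lambda$; testing this identity against $\theta_\lambda$ and using that $\int_\Omega\nabla(u_\lambda-\psi)\cdot\nabla\theta_\lambda\,dx\le0$ (because $\theta_\lambda$ is a positive multiple of $(\psi-u_\lambda)^+$) yields the uniform bound $\|\theta_\lambda\|_{L^2(\Omega)}\le\|w+\Delta\psi\|_{L^2(\Omega)}$, where $\psi\in H^2(\Omega)$ is used to guarantee $\Delta\psi\in L^2(\Omega)$. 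Consequently $\|\Delta u_\lambda\|_{L^2(\Omega)}\le\|w\|_{L^2(\Omega)}+\|w+\Delta\psi\|_{L^2(\Omega)}$ uniformly in $\lambda$, and passing to the limit $\lambda\to0^+$ (with $u_\lambda\to u$ in $H_0^1(\Omega)$ and $\Delta u_\lambda\rightharpoonup\Delta u$ in $L^2(\Omega)$) shows $\Delta u\in L^2(\Omega)$. Notably only the Laplacian is controlled, so no regularity of $\partial\Omega$ is needed, which matches the statement. Applying this at a.e.\ time $t$ to $w=f(t)-\dot u(t)\in\partial\Phi(u(t))$ yields $\Delta u(t)\in L^2(\Omega)$ for a.e.\ $t$ and completes the proof.
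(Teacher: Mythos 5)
Your proof is correct, and it reaches the theorem by a genuinely different route on the two decisive points. First, you keep the source as a time-dependent forcing term and solve $\dot u+\partial\Phi(u)\ni f(t)$ with $\Phi$ the constrained Dirichlet energy, invoking the perturbed Brezis theory \cite[Theorem 4.11]{Barbu2010}; the paper instead absorbs $f$ into the potential, setting $\Phi=\Psi+\delta_{K(\psi)}$ with $\Psi(u)=\tfrac12\int_\Omega|\nabla u|^2\,dx-\int_\Omega fu\,dx$, and applies the unforced theory \cite[Theorem 17.2.5]{SoBV}. Your choice is in fact more faithful to the hypothesis $f\in L^2(0,T;L^2(\Omega))$: the paper's potential only makes literal sense for time-independent $f$, so your formulation quietly repairs that inconsistency. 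Second, where the paper identifies $\partial\Phi=\partial\Psi+N_{K(\psi)}$ through a maximal-monotonicity (Minty) argument, citing \cite[Proposition 2.11]{Barbu2010} for the resolvent problem $u-\Delta u+N_{K(\psi)}(u)\ni g$ and then reading off $\dom\partial\Phi=\{u\in K(\psi)\cap H_0^1(\Omega):\Delta u\in L^2(\Omega)\}$, you prove the one inclusion actually needed, $\dom\partial\Phi\subset\{\Delta u\in L^2(\Omega)\}$, directly by penalization: your Lewy--Stampacchia-type bound $\|\lambda^{-1}(\psi-u_\lambda)^+\|_{L^2}\le\|w+\Delta\psi\|_{L^2}$ (which is exactly where $\psi\in H^2(\Omega)$ enters, as it does implicitly in the paper's citation) gives a uniform $L^2$ bound on $\Delta u_\lambda$, and coercivity plus uniqueness for the variational inequality identifies the limit with $u$. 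Your route is longer but self-contained, essentially re-deriving the elliptic regularity that the paper outsources to the cited resolvent result. Two routine points you glossed over, both harmless: the $L^2$-density of $K(\psi)\cap H_0^1(\Omega)$ in $K(\psi)$, needed both for $\overline{\dom\Phi}=K(\psi)$ and to pass from test functions $\phi\in K(\psi)\cap H_0^1(\Omega)$ in your subdifferential characterization to all $\phi\in K(\psi)$ in \eqref{eq:obstacle_problem} (it follows from your mollification sketch, e.g.\ via $u_n=\psi^++\max(w_n,-\psi^-)$ with $w_n\in C_c^\infty(\Omega)$, using $\psi^+\in H_0^1(\Omega)$); and the observation, which you in fact state more carefully than the paper does, that with $u_0\in K(\psi)=\overline{\dom\Phi}$ only, absolute continuity holds on compact subsets of $(0,T]$ rather than up to $t=0$.
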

\begin{proof}
    Let us consider $\Phi:L^2(\Omega)\to\R$ defined as $\Phi = \Psi + \delta_{K(\psi)}$, where $\delta_{K(\psi)}$ is the indicator function of the closed convex $K(\psi)$, and 
    \begin{align*}
			\Psi(u):=\left\{ \begin{array}{lc} \displaystyle\frac{1}{2}\int_{\Omega} |\nabla u(x)|^2\,dx-\int_{\Omega}f(x)u(x)\, dx &\,\, if\,  u\in H_0^1(\Omega), \\ \\ +\infty &\,\, if\,  u\in L^2(\Omega),\, u\notin H_0^1(\Omega). \end{array} \right.
	\end{align*}
  Since $\Psi$ is a convex, lower semicontinuous, and proper function, it defines a maximal monotone operator $\partial\Psi$. On the other hand, we have that $\partial \Psi + N_{K(\psi)}\subset \partial(\Psi+\delta_{K(\psi)})=\partial\Phi$, where $ N_{K(\psi)}$ denote the normal cone of the set $K(\psi)$. Moreover, since Minty's Theorem \cite[Thrm 17.2.1]{Bottou2010} $\partial \Psi + N_{K(\psi)}$ is a maximal monotone operator, if only if, $R(I+\partial \Psi + \partial\delta_{K(\psi)})=L^2(\Omega)$ which is equivalent to the existence of solution of the equation $u -\Delta u + N_{K(\psi)}(u)\ni f$, with $u=0$ on $\partial\Omega$. The well-posedness is ensured by \cite[Proposition 2.11]{Barbu2010}. Consequently, by maximality, $\partial \Psi + N_{K(\psi)}=\partial\Phi$. Observe that $\text{dom }\partial \Phi = \{u \in K(\phi)\cap H_0^1(\Omega)\,:\, \Delta u \in L^2(\Omega)\}$.

  Now let us observe that the gradient flow associated with $\Phi$ is given by 
\begin{align}\label{eq:obstacle_wt_cone}
			\begin{cases}
				0\in \dot u-\Delta u -f + N_{K(\psi)} \quad &\text{in } \Omega\times (0,T),\\
				u(x,t)=0 & \text{on }\partial\Omega \times (0,T),\\
				u(x,0)=u_0(x)& \text{in }\Omega.
			\end{cases}
		\end{align}
	Using the definition of the normal cone mapping, observe that the variational inequality \eqref{eq:obstacle_problem} is equivalent to \eqref{eq:obstacle_wt_cone}. Therefore, due \cite[Theorem 17.2.5]{SoBV} there exists a unique strong solution of \eqref{eq:obstacle_problem} in $C([0,T];L^2(\Omega))$ such that $u(t)\in \text{dom }\partial \Phi$ a.e. $t\in (0,T)$.
  \end{proof}

	

For simplicity and to avoid formulating additional assumptions, we will assume that the solution of problem (\ref{eq:obstacle_problem}) belongs to $C^1([0,+\infty);L^2(\Omega))$. It is known from classic results that $\max\{\dot u,0\}$ belongs to $C^{1,1}([0,T]\times \bar \Omega)$, see \cite[Section 4]{Petrosyan} for further regularity results. If $\dot u$ is nonnegative, its continuity follows; see, e.g., \cite{Blanchet}. The nonnegativity of the time derivative can be established in some special cases (special initial conditions, boundary conditions, and time-independent coefficients); see, for example, \cite{Blanchet, Friedman}.

	\subsubsection{Random domain decomposition and convergence}
	
	In order to decompose the domain, let $n\in \mathbb{N}$ and consider a non-overlapping partition $\{\Omega_{i}\}_{i=1}^n$ of $\Omega$. Let us introduce $\{\chi_{\Omega_i}\}_{i=1}^n\subset W^{1,\infty}(\Omega)$ a partition of unity subordinate to $\{\Omega_{i}\}_{i=1}^n$, that is,
 \begin{align}
\sum_{i=1}^n\chi_{\Omega_i} =1\quad \text{in }\Omega,\quad\text{and}\quad \text{supp} (\chi_{\Omega_i})\subset \Omega_i, \quad \text{for every}\, i\in\{1,\dots, n\}.
 \end{align}
In a completely analogous way to \Cref{sec:problem_formulation} we introduce the batches $\{B_j\}_{j=1}^m$ such that $\cup_{j=1}^m B_j=\{1,\dots, n\}$,  the probabilities $\{p_i\}_{i=1}^n$ and  $\{\pi_i\}_{i=1}^n$, and the sequence of independent random variables $\{j_k\}_{k\in\mathbb{N}}$. Therefore, let us consider the function $w_{\varepsilon}:[0,+\infty)\to L^2(\Omega)$ solution of
	
	\begin{align}\label{eq:mbdgpde}
		\begin{cases}
			\int_{\Omega} \left(\frac{w_{k}-w_{k-1}}{\varepsilon}\right) (\phi-w_k) dx + A(w_k,\phi,j_k) - F(\phi,j_k)\geq 0\,\,&
			\text{a.e. }k\in\mathbb{N},\\
			w_0(x)=u_0(x)& \text{in }\Omega.
		\end{cases}
	\end{align}
where $A(w_k,\phi,j_k)$ and $ F(\phi,j_k)$ are given by 
\begin{align}\label{eq:A_rmm_domain}
    A(w_k,\phi,j_k)=\frac{1}{|B_{j_{k_t}}|}\sum_{i\in B_{j_{k_t}}}\int_{\Omega}\dive(\chi_{\Omega_i} \nabla w_k )(\phi-w_k),
\end{align}
and 
\begin{align}\label{eq:F_rmm_domain}
     F(\phi,j_k)= \frac{1}{|B_{j_{k_t}}|}\sum_{i\in B_{j_{k_t}}}\int_{\Omega}\chi_{\Omega_i} f(\phi-w_k) dx.
\end{align}
To embed sequence $\{w_k\}$ into $L_{loc}^2([0,+\infty);L^2(\Omega))$, we consider the function $w:[0,+\infty)\to L^2(\Omega)$ given by $w(t):=w_{k}$ if $t\in[t_{k-1},t_k)$.
 The previous system can be understood as a randomization of both the principal operator and the source. It is important to note that for a $k\in \mathbb{N}$ in which the subdomain $\Omega_*$ has not been selected, the solution $w_k$ of the \eqref{eq:mbdgpde} remains constant for the next step; that is, $w_{k+1}=w_k$  on $\Omega_*$. We have the following theorem. In the following, we assume that $\dot u$ is continuous then the following result holds.

	\begin{thrm}\label{theorem_obstacle_mbm}
		Let $u:[0,+\infty)\to L^2(\Omega)$ be the solution of the obstacle problem (\ref{eq:obstacle_problem}) and $w_{\varepsilon}:[0,+\infty)\to L^2(\Omega)$ the solution of (\ref{eq:mbdgpde}). Then, for each $t\in[0,+\infty)$,
		\begin{align}\label{eq:convergence_obstacle}
			\mathbb E\big\|w_\varepsilon(t) - u(t)\big\|_{\mathcal H}^2\longrightarrow 0\quad\text{as}\quad \varepsilon\longrightarrow0^+.
		\end{align}
		Moreover, let $u^*\in H_0^1(\Omega)$ be the solution of the (stationary) obstacle problem
		\begin{align*}
  \int_\Omega \nabla u^* \nabla(\phi-u^*) - \int_\Omega f(\phi-u^*) dx\geq 0.
		\end{align*}
		Then, for every $\eta>0$ there exists $T>0$ such that
		\begin{align}\label{eq:mm_behivior}
			\mathbb E\big\|w_\varepsilon(T) - u^*\big\|_{\mathcal H}^2\le \eta \quad \text{for all $\varepsilon>0$ small enough.}
		\end{align}
	\end{thrm}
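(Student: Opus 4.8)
The strategy is to realize the random domain decomposition scheme \eqref{eq:mbdgpde} as an instance of the mini-batch minimizing movement \eqref{gf_mm0}, and then to read off both conclusions from Theorems \ref{th:conv_mm} and \ref{th:asym_beh}. Throughout, $\mathcal H=L^2(\Omega)$ and, by Theorem \ref{th:regularity_u_obs}, the obstacle problem \eqref{eq:obstacle_problem} is the gradient flow of $\Phi=\Psi+\delta_{K(\psi)}$ with $\partial\Phi=\partial\Psi+N_{K(\psi)}$. The whole task thus reduces to exhibiting a decomposition $\Phi=\sum_{i=1}^n p_i\Phi_i$ of the form \eqref{Vertretung}, built from the domain partition, whose proximal step \eqref{gf_mm0} is exactly \eqref{eq:mbdgpde}.

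First I would use the partition of unity $\{\chi_{\Omega_i}\}_{i=1}^n$ to split the Dirichlet energy: for $u\in H_0^1(\Omega)$ set
\[
\Psi_i(u):=\frac{1}{p_i}\Big(\frac12\int_\Omega \chi_{\Omega_i}|\nabla u|^2\,dx-\int_\Omega \chi_{\Omega_i} f u\,dx\Big),
\]
and $\Psi_i:=+\infty$ otherwise, and put $\Phi_i:=\Psi_i+\delta_{K(\psi)}$. Since $\sum_i\chi_{\Omega_i}\equiv1$ and $\sum_i p_i=1$, one gets $\sum_i p_i\Phi_i=\Psi+\delta_{K(\psi)}=\Phi$, which is \eqref{Vertretung}; the common constraint and the finiteness of each $\Psi_i$ on $K(\psi)\cap H_0^1(\Omega)$ give the domain inclusions \eqref{assu:contention_domains}. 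A Moreau–Rockafellar / maximal-monotonicity argument as in Theorem \ref{th:regularity_u_obs} then yields $\partial\Phi_{\mathcal B_j}=\Psi_{\mathcal B_j}'+N_{K(\psi)}$, where $\Psi_{\mathcal B_j}'$ is the single-valued elliptic operator $u\mapsto -|B_j|^{-1}\sum_{i\in B_j}p_i^{-1}\big(\dive(\chi_{\Omega_i}\nabla u)+\chi_{\Omega_i}f\big)$, and hence the sum rule \eqref{assu:sum_rule}. Writing $\partial\Phi(u)^\circ=\Psi'(u)+\eta^*(u)$ with $\Psi'$ the single-valued part of $\partial\Psi$ on $\dom\partial\Phi$ and $\eta^*(u)$ the element of $N_{K(\psi)}(u)$ achieving the minimal norm, and defining $\xi_j(u):=\Psi_{\mathcal B_j}'(u)+\eta^*(u)$, gives the representation \eqref{eq:represent_subdif} exactly as in the constrained-optimization example. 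Testing the inclusion $-\varepsilon^{-1}(w_k-w_{k-1})\in\partial\Phi_{\mathcal B_{j_k}}(w_k)$ against $\phi-w_k$ for $\phi\in K(\psi)$ and integrating by parts reproduces the variational inequality \eqref{eq:mbdgpde}; thus $w_\varepsilon$ is precisely the mini-batch minimizing movement \eqref{gf_mm0}.

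With this identification the two assertions follow. For \eqref{eq:convergence_obstacle}, note that $\xi_j(u(t))-\partial\Phi(u(t))^\circ=\Psi_{\mathcal B_j}'(u(t))-\Psi'(u(t))$, so the normal-cone part cancels and $\Lambda\circ u$ depends only on the elliptic parts; along the $C^1$ trajectory on the bounded domain $\Omega$ this is locally bounded. Since $u\in C^1([0,+\infty);L^2(\Omega))$ by hypothesis, Theorem \ref{th:conv_mm} gives $\mathbb E\|w_\varepsilon(t)-u(t)\|_{\mathcal H}^2\to0$. For \eqref{eq:mm_behivior} I would verify that $\Phi$ is inf-compact: by Poincaré's inequality $\Psi$ is coercive on $H_0^1(\Omega)$, so its sublevel sets are bounded in $H_0^1(\Omega)$, hence relatively compact in $L^2(\Omega)$ by Rellich–Kondrachov, and intersecting with the closed set $K(\psi)$ preserves compactness. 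With $\dot u$ continuous and $\Lambda\circ u$ locally bounded, Theorem \ref{th:asym_beh} produces $u^*\in\argmin_{\mathcal H}\Phi$ satisfying the stated estimate; finally, the Euler–Lagrange characterization of $\argmin_{u\in K(\psi)}\Psi$ is exactly the stationary variational inequality defining $u^*$, which identifies the two.

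The main obstacle is the rigorous verification of the sum rule \eqref{assu:sum_rule} together with the domain inclusions, since the shared constraint makes each $\partial\Phi_{\mathcal B_j}$ genuinely set-valued: one must confirm that $\partial(\Psi_{\mathcal B_j}+\delta_{K(\psi)})$ splits additively as $\Psi_{\mathcal B_j}'+N_{K(\psi)}$ — most cleanly through the range condition $R(I+\Psi_{\mathcal B_j}'+N_{K(\psi)})=L^2(\Omega)$ and Minty's theorem, exactly as in Theorem \ref{th:regularity_u_obs} — and that the minimal-norm selection $\eta^*(u)$ is common to all batches so that each $\xi_j$ is well defined with values in $\dom\partial\Phi_{\mathcal B_j}$. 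The secondary technical point is the local boundedness of $\Lambda\circ u$ in this infinite-dimensional set-valued setting, which is handled by the cancellation of $\eta^*$ noted above and the assumed $C^1$ regularity of the flow on the bounded domain.
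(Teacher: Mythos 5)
Your proposal follows essentially the same route as the paper's proof: split the Dirichlet energy via the partition of unity $\{\chi_{\Omega_i}\}$, set $\Phi_i=\Psi_i+\delta_{K(\psi)}$, identify $\partial\Phi_{\mathcal B_j}$ as the localized elliptic operator plus $N_{K(\psi)}$ so that the scheme \eqref{eq:mbdgpde} is the mini-batch minimizing movement \eqref{gf_mm0}, and conclude via Theorems \ref{th:conv_mm} and \ref{th:asym_beh}. If anything, you are more careful than the paper: you rescale by $p_i^{-1}$ so the decomposition literally matches \eqref{Vertretung}, you note the cancellation of the common normal-cone selection $\eta^*$ in $\Lambda$, and you explicitly verify inf-compactness of $\Phi$ (Poincar\'e plus Rellich--Kondrachov) and the identification of $u^*$ with the stationary variational inequality, hypotheses the paper's proof invokes only implicitly.
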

	
			
	\begin{proof}
Let us consider $\Phi$ the functional defined in the proof of Theorem \ref{th:regularity_u_obs}. Then, the gradient flow associated with $\Phi$ is given by \eqref{eq:obstacle_wt_cone}.
		Using the definition of the normal cone mapping, observe that the variational inequality \eqref{eq:obstacle_problem} is equivalent to \eqref{eq:obstacle_wt_cone}. On the other hand, for each $i\in\{1,\dots, n\}$, we introduce  
		\begin{align*}
			\Psi_i(u):=\left\{ \begin{array}{lc} \displaystyle\frac{1}{2}\int_{\Omega} \chi_{\Omega_i}(x)|\nabla u(x)|^2\, dx-\int_{\Omega}\chi_{\Omega_i}f(x)u(x)\, dx &\,\, if\,   u\in H_0^1(\Omega), \\ \\ +\infty &\,\,  if\,  u\in L^2(\Omega),\, u\notin H_0^1(\Omega). \end{array} \right.
		\end{align*}
Let us define the functional $\Phi_i:L^2(\Omega)\to\R$ given by $\Phi_i= \Psi_i + \delta_{K(\psi)}$. Analogous to Theorem \ref{th:regularity_u_obs}, we can deduce that $\partial\Phi= \partial \Psi +N_{K(\psi)}$. 
Observe that $\dom\partial \Phi_{i} =\{u\in K(\psi)\cap H_0^1(\Omega):\, \dive\big(\chi_{\Omega_i}\nabla u\big)\in L^2(\Omega)\}$ and $\partial\Phi_i(u) = -\dive\big(\chi_{\Omega_i}\nabla u \big) -\chi_{\Omega_i}f + N_{K}(u)$. Therefore, in this case, the random minimizing movement introduced in \Cref{sec:RMM} is given by the solution of 
\begin{align}\label{mbdgpde_2}
			\displaystyle-\frac{w_{k}-w_{k-1}}{\varepsilon} + A_{j_{k}}(w_k)\in N_{K(\psi)}(w_k),\quad \forall k\in\mathbb N.
		\end{align}
		where $A_{j_{k_t}}: H_0^1(\Omega) \subset L^2(\Omega)\to L^2(\Omega)$ is defined by
		\begin{align*}
			A_{j_{k_t}}(v_\varepsilon)=\frac{1}{\mathcal |B_{j_{k_t}}|}\sum_{i\in\mathcal B_{j_{k_t}}}\big(\dive\left( \chi_{\Omega_i}\nabla v_\varepsilon\right) -\chi_{\Omega_i}f \big) ,\quad \text{for every }k\in \mathbb{N}.
		\end{align*}
		
		Observe that $\Lambda\circ u:[0,+\infty)\to L^2(\Omega)$ corresponde to the norm $H^2_0(\Omega)$ of $u$ and is locally bounded due Theorem \ref{th:regularity_u_obs}. Therefore, since Theorem \ref{th:conv_mm} we deduce \eqref{eq:convergence_obstacle}. Moreover, as a consequence of Theorem \ref{th:asym_beh}, we deduce that for every $\eta>0$, there exists $T>0$ such that \eqref{eq:mm_behivior} holds. 
\end{proof}
\subsubsection{Illustrative numerical example}
In this section, we will illustrate Theorem \ref{theorem_obstacle_mbm} with a numerical example. For this purpose, let us consider $\Omega=[-1,1]\times[-1,1]\subset\R^2$ and the obstacle 
\begin{align*}
\psi(x)=\begin{cases}
    -4(x+0.5)^2-4y^2 &\text{ if }4(x+0.5)^2+4y^2<1,\\
    -4(x-0.5)^2-4y^2 &\text{ if }4(x-0.5)^2+4y^2<1, \\
    0&\text{ otherwise.}
\end{cases}    
\end{align*}
We take $u_0(x)=0$ as the initial condition and $f=-1$ as the source term. This source term acts as a gravity force, pushing $u$ to be close to the obstacle. For the numerical implementation of \eqref{eq:obstacle_problem}, for every $\delta>0$ we consider the penalized problem
\begin{align}\label{eq:penalized_problem_domain}
    \begin{cases}
        \int_{\Omega} \dot u^\delta \phi\,dx +\int_\Omega \Delta u^\delta  \phi\, dx - \frac{1}{\delta}\int_\Omega\max\{-u^\delta + \psi,0\}\phi\,dx  - \int_\Omega f\phi\,dx = 0\quad&
        \text{a.e. }t\in (0,T),\\
        u^\delta(x,t)=0 & \text{on }\partial\Omega \times (0,T),\\
        u^\delta(x,0)=u_0(x)& \text{in }\Omega,
		\end{cases}
\end{align}
As is shown in \cite[Proposition 2.2.]{MR1896217} $u^\delta \to u$ in $L^2(0,T;L^2(\Omega))$ as $\delta\to 0$. On the other hand, we consider the following subdomains 
\begin{equation*}
\begin{aligned}
     \Omega_1=&[-1,0.1]\times [-1,0.1], \quad \Omega_2 = [-0.1 , 1]\times [-1,0.1],\\
    \Omega_3 = [-0&.1,1]\times[-1,0.1],\quad \text{
    and
    } \quad\Omega_4=[-0.1,1]\times [-0.1,1],
\end{aligned}
\end{equation*}
 and the function 
\begin{align*}
    h(x)=\begin{cases}
        0 &\text{ if } x\geq -0.1\\
        \frac{r+0.1}{0.2} &\text{ if } x\in[-0.1,0.1],\\
        1 &\text{ otherwise}.
    \end{cases}
\end{align*}
The partition is then given by the functions 
\begin{equation}\label{eq:definition_chi_obstacle}
    \begin{aligned}
        \chi_{\Omega_1}(x,y) &= (1-h(x))(1-h(y)), \quad 
        \chi_{\Omega_2}(x,y) = h(x)(1-h(y)), \\
        \chi_{\Omega_3}(x,&y) = (1-h(x))h(y), \quad \text{and}\quad
        \chi_{\Omega_4}(x,y) = h(x)h(y).
    \end{aligned}
\end{equation}
These functions are illustrated \Cref{fig:chi_functions}.
 \begin{figure}
\centering
\includegraphics[width=0.9\textwidth]{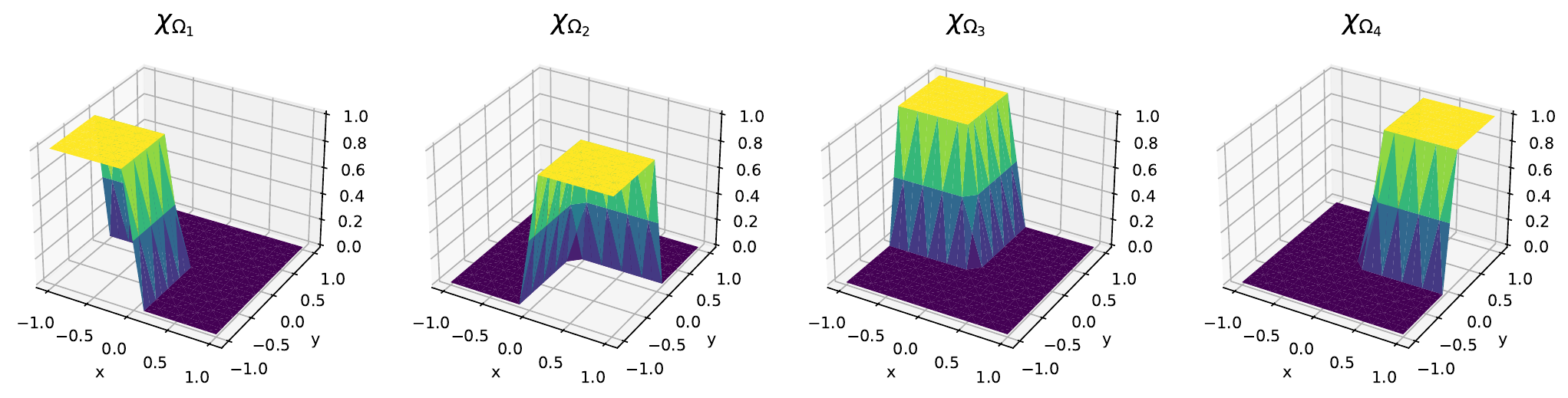}
\caption{Illustration of funtions $\chi_{\Omega_i}$. Observe that $\{\chi_i\}_{i=1}^4$ is a partition of unity suborditnate to $\{\Omega_i\}_{i=1}^4$. }\label{fig:chi_functions}
\end{figure}
 We can then condider batches $B_i={i}$ and its respective probability for each $i\in\{1,\dots,4\}$. Then, we introduce a random minimizing movement scheme of \eqref{eq:penalized_problem_domain} as
\begin{align}\label{eq:mbdgpde_pena}
		\begin{cases}
			\int_{\Omega} \left(\frac{w_{k}^\delta-w_{k-1}^\delta}{\varepsilon}\right) (\phi-w_k^\delta) dx + A(w_k^\delta,\phi,j_k) - \frac{1}{\delta}\int_\Omega\max\{-w^\delta + \psi,0\}\phi\,dx - F(\phi,j_k)= 0\,\,&
			\text{a.e. }k\in\mathbb{N},\\
			w_0^\delta(x)=u_0(x)& \text{in }\Omega,
		\end{cases}
	\end{align}
  where $A$ and $F$ are defined as in \eqref{eq:A_rmm_domain} and \eqref{eq:F_rmm_domain} respectively.

We take $T=0.5$ as the time horizon. For the implementation of \eqref{eq:penalized_problem_domain}, we consider an implicit Euler discretization of the time interval, using $60$ discretization points. For space discretization, we use finite elements, with basis functions given by the standard continuous Galerkin functions (piecewise polynomial functions of order one). Also, the penalization parameter $\delta=10^{-8}$ and the $\max$ function have been regularized using a smooth max function (see \cite{MR2822818} for a rigorous analysis). We have used $20$ elements. The implementation was carried out using FEniCS, an open-source computing platform finite elements.

For the implementation of \eqref{eq:mbdgpde_pena}, we used FEniCS to compute the solution. To approximate the average of the solution, we used $8$ realizations and then averaged them. The result can be seen in \Cref{fig:rand_vs_det_dd}.

\begin{figure}
\centering
\includegraphics[width=0.9\textwidth]{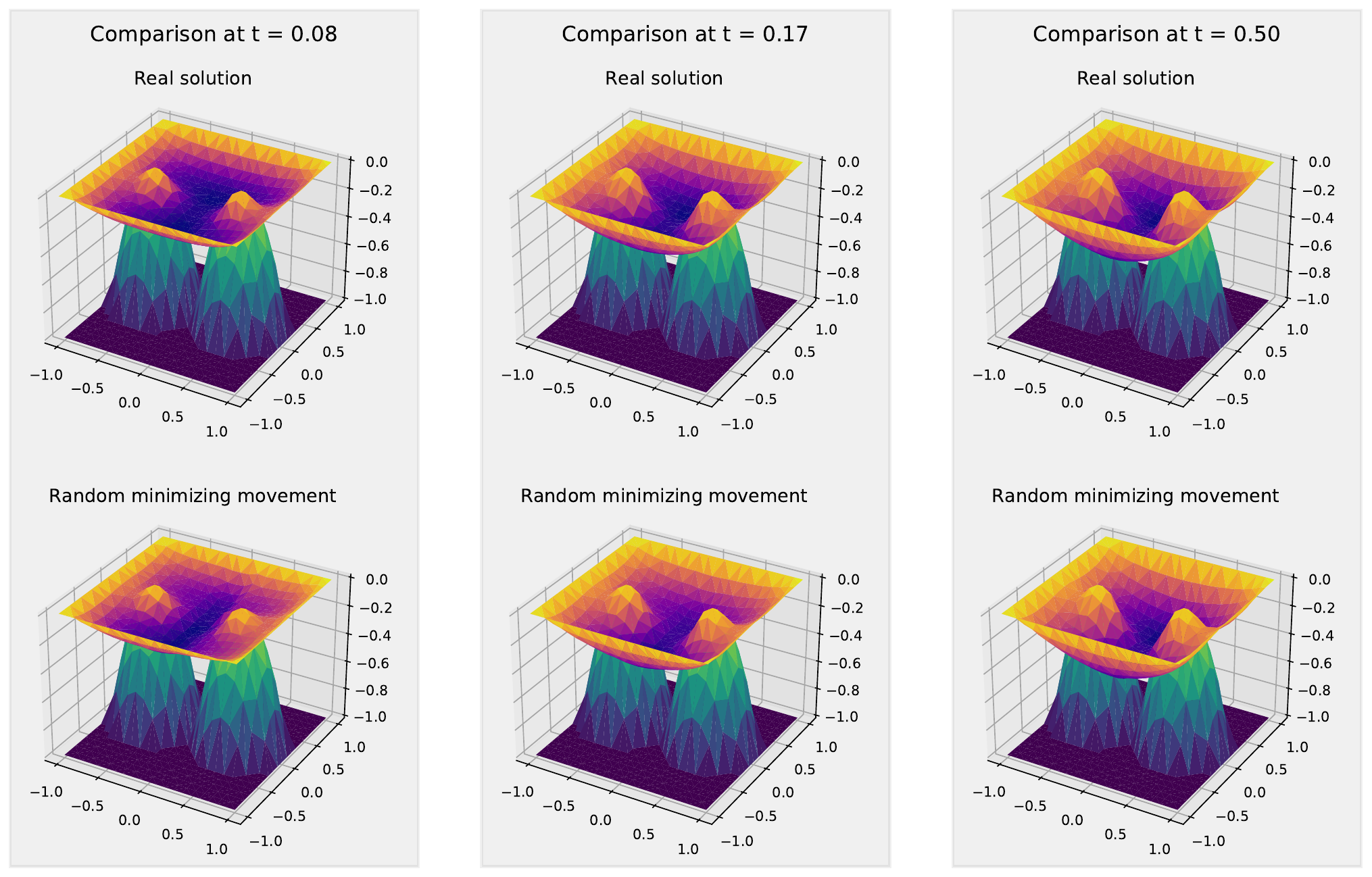}
\caption{Comparison between the obstacle and random minimizing movement solutions (i.e., systems \eqref{eq:penalized_problem_domain} and \eqref{eq:mbdgpde_pena}) for different times.}\label{fig:rand_vs_det_dd}
\end{figure}

To illustrate the convergence of the \textit{Random domain decomposition}, we use $8$ realizations to approximate the average, and $25$ elements fixed for space discretization. Denoting by $K$ the number of time steps that we are considering ($\varepsilon=1/K$ in \eqref{eq:mbdgpde_pena}), \Cref{fig:convg_obstacle} illustrates the convergence of this scheme. Observe that we can guarantee convergence, but there has been no convergence rate since our hypothesis. However, the numerical evidence suggests that the convergence order of \textit{Random domain decomposition} is $O(\varepsilon)$.

\begin{figure}
\centering
\includegraphics[width=0.4\textwidth]{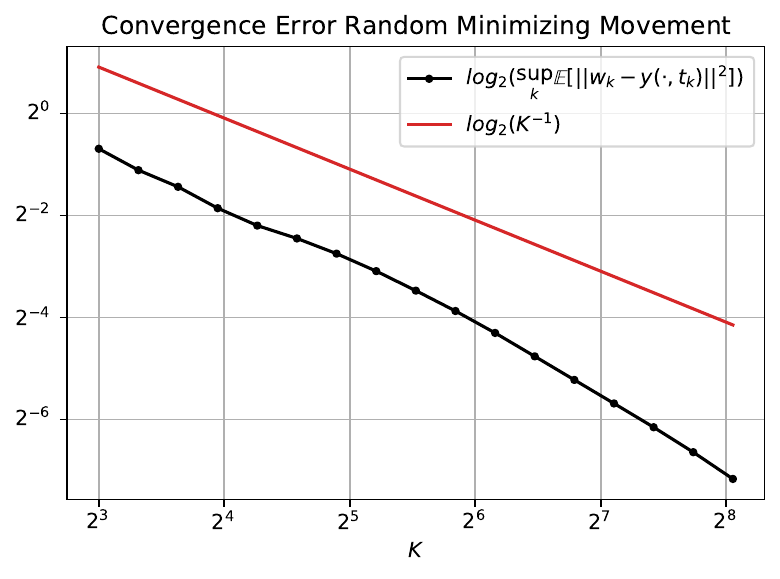}
\caption{We illustrate the convergence as $K\to \infty$ (equivalent to $\varepsilon\to0$) of the random minimizing movement.}\label{fig:convg_obstacle}
\end{figure}






\appendix
\section{Appendix: Variance of mini-batch descent}\label{app:variance}

In the following, we present some commentaries concerning the variance of mini-batch descent. 
	Let $\Phi:\mathcal H\to\mathbb R\cup\{+\infty\}$ be the convex proper lower semicontinuous functional in Section (\ref{sec:problem_formulation}). Due to the definition of probabilities $\pi_1,\dots, \pi_m$ it is possible to see that 
	\begin{align}\label{nocheinever}
		\Phi = \sum_{j=1}^{m}\pi_j\Phi_{\mathcal B_j}.
	\end{align}
	This, in turn, says that $\Phi$ can be seen as the expected value of a random variable taking value $\Phi_{\mathcal B_j}$ with probability $\pi_j$.   Consider the sequence of independent random variables $\{j_k\}_{k\in\mathbb N}$ in Section \ref{sec:problem_formulation}. We see from (\ref{nocheinever}) that $\mathbb E \Phi_{\mathcal B_{j_k}} = \Phi$ for all $k\in\mathbb N$. 
	
	In order for the mini-batch descent to be a non-biased estimator, in the sense that the expected value of the randomly chosen subgradient coincides with the subdifferential of $\Phi$ at each step, the sum rule for subdifferentials  was assumed to hold in Section \ref{sec:problem_formulation}, i.e., 
	\begin{align}\label{MRsub}
		\partial \Phi(u) = \sum_{j=1}^m \pi_{j}\, \partial \Phi_{\mathcal B_{j}}(u)\quad \text{for all $u\in\dom\partial\Phi$.}
	\end{align}
	This rule can be thought of as a structural assumption allowing the set-valued variable taking value $\partial \Phi_{\mathcal B_j}$ with probability $\pi_j$ can have expected value $\partial \Phi(u)$. For differentiable potentials, this holds automatically since the gradient operator is linear.
	For the validity of (\ref{nocheinever}), we refer to the general result \cite[Corollary 16.50]{BauCom}; see also \cite[Theorem 3.1]{Bura} for a sufficient dual condition.
	\medbreak
 
	{\bf 1. Splitting of the minimal norm subdifferential:}
	Additivity assumption (\ref{MRsub}) ensures that for each $u\in\dom\Phi$ there  exist $\xi_1(u),\dots,\xi_m(u)\in\mathcal H$ such that $\xi(u)\in \partial \Phi_{\mathcal B_j}(u)$ for $j\in\{1,\dots,m\}$, and $\partial \Phi(u)^\circ = \sum_{j=1}^{m} \pi_j\xi_j(u)$.
	In sparse inversion problem we presented the splitting of the minimal norm subdifferential was unique. 
	However, in the general case, without further assumption on mini-batch potentials, there is no reason for this decomposition to be unique; thus, it should be chosen according to the problem at hand.  From now on, we assume that functions $\xi_{1},\dots,\xi_m:\dom\Phi:\to\mathcal H$ are fixed.

	It can be readily seen that for each $k\in\mathbb N$, $\mathbb E \xi_{j_k}(u) = \partial \Phi(u)^\circ$ for all $u\in\dom\Phi$.

	In Section \ref{sec:problem_formulation}, we introduced function $\Lambda:\mathcal H\to \mathbb R$ given by $\Lambda(u):= {\sum_{j=1}^m \pi_j|\xi_{j}(u) -\partial \Phi(u)^\circ|^2_{\mathcal H}}$ to provide a quantitative measure of variance for mini-batch descent.  When reduced to the case where mini-batch potentials are differentiable, $\Lambda$ is the usual quantifier of variance used to provide bounds and estimates in stochastic gradient algorithms. 
	The key feature of $\Lambda$ is that $\Var\left[ \xi_{j_k}\right]=\Lambda$ for all $k\in\mathbb N$. 
	\medbreak

	{\bf 2. Local boundedness of $\Lambda$:}
	Let $u:[0,+\infty)\to\mathcal H$ be the solution of gradient flow equation (\ref{gf_eq}). All hypotheses made in Section \ref{sec:problem_formulation} require at least local integrability of $\Lambda\circ u:[0,+\infty)\to\mathbb R$. In all the examples we provided (sparse inversion, constrained optimization, and domain decomposition), this function was locally bounded. 
	
	In general, there is a criterion to assess the local boundedness of the subdifferential; see \cite{BauCom}. This can be employed to give a sufficient condition under which  $\Lambda$ is locally bounded (this is stronger than $\Lambda\circ u$ being locally bounded). For any $v\in \mathcal H$, 
	\begin{align*}
		\text{if $v\in \inte \dom\Phi_{\mathcal B_{j}}$  $\forall j\in\{1,\dots, m\}$, then $\exists \mathcal V\in \mathcal N(v)$ such that  $\Lambda(\mathcal V)$ is bounded.}
	\end{align*}
	Here, $\mathcal N(v)$ denotes the neighborhood filter of $v\in\mathcal H$. Therefore, the openness of the effective domains of mini-batch potentials is, in general, enough to grant the local boundedness of $\Lambda\circ u$. However, in general, and due to the properties of $u$, it is much easier to bound $\Lambda\circ u$, as shown in the example in Section \ref{Sparseinv}, where $\Lambda\circ u$ is bounded all over $[0,+\infty)$.
	
	\medbreak

	{\bf 3. Another interpretation of function $\Lambda$:}
	For simplicity, suppose that $m$ divides $n$, and that batches contain exactly $\frac{n}{m}$ elements. Moreover, assume that they are selected with equal probability, i.e., $\pi_j = m^{-1}$ for all $j\in\{1,\dots,m\}$.
	
	In this case,  the potential can be represented as 
	\begin{align*}
		\Phi(u) = \frac{1}{n}\sum_{j=1}^m\sum_{i\in\mathcal B_j} \Phi_i(u)\quad \forall u\in\dom\Phi.
	\end{align*}
	Assuming that $\partial \Phi_{\mathcal B_j} = m^{-1}\sum_{i\in\mathcal B_j}\partial \Phi_i$, it is possible to find functions $\xi_{i,j}:\dom\Phi\to\dom\Phi_i$ such that $\partial \Phi(u)^\circ = n^{-1}\sum_{j=1}^{m}\sum_{i\in\mathcal B_j}\xi_{i,j}(u)$ for all $u\in\dom\Phi$. Consider the function $\Gamma:
	\mathcal H\to \mathbb R$ given by 
	\begin{align*}
		\Gamma(u):=\frac{1}{n}\sum_{j=1}^m\sum_{i\in\mathcal B_j} \big\|\xi_{i,j}(u) - \partial \Phi(u)^\circ\big\|_{\mathcal H}^2
	\end{align*}
	This function can be readily identified as the trace of the empirical covariance matrix of per-example subgradients. 
	
	Following \cite[Appendix A]{Smith2020origin}, it is possible to find that 
	\begin{align*}
		\Lambda(u) = \frac{n-m}{m(n-1)}\Gamma(u)\quad \forall u\in\dom \Phi.
	\end{align*}
	

\vspace{5mm}
\section*{Acknowledgments}

  A. Dom\'{i}nguez Corella is supported by the Alexander von Humboldt Foundation with an Alexander von Humboldt research fellowship and by the Emerging Talents Initiative (FAUeti) funding. M. Hern\'{a}ndez has been funded by the Transregio 154 Project, Mathematical Modelling, Simulation, and Optimization Using the Example of Gas Networks of the DFG, project C07, and the fellowship "ANID-DAAD bilateral agreement". Both authors have been partially supported by the DAAD/CAPES Programs for Project-Related Personal, grant 57703041 'Control and numerical analysis of complex system'.

The authors wish to express their gratitude to Enrique Zuazua  for insightful discussions, and for his constant and generous support.

\bibliography{references.bib}
\bibliographystyle{abbrv}
\end{document}